\theoremstyle{plain}
\newtheorem{Thm}[equation]{Theorem}
\newtheorem{Prop}[equation]{Proposition}
\newtheorem{Cor}[equation]{Corollary}
\newtheorem{Lem}[equation]{Lemma}
\newtheorem{Con}[equation]{Conjecture}
\theoremstyle{definition}
\newtheorem{Def}[equation]{Definition}
\newtheorem{Que}[equation]{Question}
\newtheorem{Eg}[equation]{Example}
\newtheorem{Convention}[equation]{Convention}
\newtheorem{Rem}[equation]{Remark}
\numberwithin{equation}{section}
\numberwithin{figure}{section}
\numberwithin{table}{section}
\newcommand{\C}{\mathbb C} 
\newcommand{\Q}{\mathbb Q} 
\newcommand{\R}{\mathbb R} 
\newcommand{\Z}{\mathbb Z} 
\newcommand{\bu}{\mathbf u} 
\newcommand{\bk}{\mathbf n} 
\newcommand{\bm}{\mathbf m} 
\newcommand{\bn}{\mathbf p} 
\newcommand{\Cone}{\ensuremath{\operatorname{Cone}}}
\newcommand{\eqd}{\stackrel{\rm \tiny def}{=}} 
\newcommand{\sF}{\mathcal F} 
\newcommand{\sG}{\mathcal G} 
\newcommand{\sH}{\mathcal H} 
\newcommand{\sM}{\mathcal M} 
\newcommand{\OZ}{{\mathcal O}\!_{Z}} 
\newcommand{\KY}{{K}\!_{Y}} 
\newcommand{\KXcan}{{K}\!_{X_{\mathrm{can}}}} 
\newcommand{\dual}{^{\vee}} 
\newcommand{\mul}{^{\times}} 
\newcommand{\AHilb}{\ensuremath{A\text{-}\operatorname{\! Hilb}}}
\newcommand{\GHilb}[1]{\ensuremath{G\text{-}\operatorname{\! Hilb}}\C^{#1}}
\newcommand{\GHii}[1]{\ensuremath{G_{#1}\text{-}\operatorname{\! Hilb}}\C^3}
\newcommand{\GHil}{\ensuremath{G\text{-}\operatorname{\! Hilb}}}
\newcommand{\HHilb}{\ensuremath{\text{-}\operatorname{\! Hilb}}}
\newcommand{\Spec}{\ensuremath{\operatorname{Spec}}}
\newcommand{\Hom}{\ensuremath{\operatorname{Hom}}}
\DeclareMathOperator{\GL}{GL}
\DeclareMathOperator{\SL}{SL}
\DeclareMathOperator{\wt}{wt} 
\DeclareMathOperator{\Irr}{Irr} 
\newcommand{\diag}{\ensuremath{\operatorname{diag}}}
\newcommand{\iso}{\cong}
\newcommand{\st}{\ensuremath{\operatorname{\bigm{|}}}}
\newcommand{\stB}{\ensuremath{\operatorname{\bigg{|}}}}
\newcommand{\QED}{\hfill$\blacksquare$\end{proof}\vskip 3pt}
\newcommand{\eeg}{~\hfill$\lozenge$\end{Eg}\vskip 3pt }
\newcommand{\erem}{~\hfill$\lozenge$\end{Rem}\vskip 3pt }
\newcommand{\ewar}{~\hfill$\lozenge$\end{War}\vskip 3pt }
\newcommand{\pull}{^{\ast}}
\newcommand{\inv}{^{-1}}
\newcommand{\hh}{\ensuremath{\operatorname{H}}} 
\newcommand{\mon}{\overline{M}_{\geq 0}} 
\newcommand{\lau}{\overline{M}} 
\newcommand{\wtga}{\wt_{\Gamma}} 
\newcommand{\wtgp}{\wt_{\Gamma'}}
\newcommand{\mth}{\sM_{\theta}} 
\newcommand{\yth}{Y_{\theta}} 
\newcommand{\lf}{\lfloor}
\newcommand{\rf}{\rfloor}
\newcommand{\one}{\boldsymbol{1}}
\newcommand{\Homz}{\ensuremath{\operatorname{Hom}}_{\mathbb Z}}
\newcommand{\zzz}{\overline{L}}
\newcommand{\mco}{{\Sigma_{\mathrm{max}}}} 
\newcommand{\gr}{{\mathfrak{S}}} 
\newcommand{\vtheta}{\psi} 
\newcommand{\roundstab}{(\phi_k)_{\star}}
\newcommand{\roundbrick}{\phi_k^{\star}}
\title[On the Craw--Ishii conjecture]{On the Craw--Ishii conjecture}
\author[S.-J. Jung]{Seung-Jo Jung}
\address{Korea Institute for Advanced Study, 85 Hoegiro, Dongdaemun-gu, Seoul, 130-722, Republic of Korea}
\email{seungjo@kias.re.kr}
\date{\today}
\begin{document}


\begin{abstract}
In~\cite{CI}, Craw and Ishii proved that for a finite abelian group $G\subset \SL_3(\C)$ every (projective) relative minimal model of $\C^3/G$ is isomorphic to the fine moduli space $\mth$ of $\theta$-stable $G$-constellations for some GIT parameter $\theta$. In this article, we conjecture that the same is true for a finite group $G\subset \GL_3(\C)$ if a relative minimal model $Y$ of $X=\C^3/G$ is smooth. 
We prove this for some abelian groups.
\end{abstract}
\maketitle

\tableofcontents 
\section{Introduction}
Let $G$ be a finite subgroup in $\GL_n(\C)$. A {\em $G$-cluster} $Z$ is a $G$-invariant subscheme of $\C^n$ with $\hh^0(\OZ)$ isomorphic to $\C[G]$ the regular representation of $G$.
For a finite group $G$ in $\SL_2(\C)$, Ito and Nakamura\cite{INa} showed that the minimal resolution of $\C^2/G$ is isomorphic to the \emph{$G$-Hilbert scheme} $\GHilb{2}$ that is the fine moduli space of $G$-clusters.
In \cite{BKR}, Bridgeland, King and Reid proved that for a finite group $G$ in $\SL_3(\C)$ the $G$-Hilbert scheme $\GHilb{3}$ is a crepant resolution of $\C^3/G$.

In\cite{CI}, Craw and Ishii introduced a generalised notion of $G$-clusters. A {\em $G$-constellation} $\sF$ is a $G$-equivariant sheaf on $\C^n$ with $\hh^0(\sF)$ isomorphic to $\C[G]$. Define the GIT stability parameter space
\[
\Theta = \left\{\theta \in \Hom_{\Z} (R(G),\Q) \st \theta\left(\C[G]\right) =0 \right\}\!,
\]
where $R(G)$ denotes the representation space of $G$. For $\theta\in\Theta$, we say that $G$-constellation $\sF$ is \emph{$\theta$-(semi)stable} if $\theta(\sG)> 0\ (\theta(\sG)\geq 0)$ for every nonzero proper subsheaf $\sG$ of $\sF$.
A stability parameter $\theta$ is called {\em generic} if every $\theta$-semistable $G$-constellation is $\theta$-stable. 

Furthermore, Craw and Ishii constructed the moduli space $\mth$ of $\theta$-stable $G$-constellations using GIT\cite{CI}. They conjectured that for a finite subgroup $G\subset\SL_3(\C)$, every projective crepant resolution of $\C^3/G$ is isomorphic to $\mth$ for some $\theta \in \Theta$ and proved this for $G$ being abelian. Note that if $G\subset\SL_3(\C)$, then $\C^3/G$ has Gorenstein canonical singularities.
Being motivated by this, this article makes the following conjecture and proves the conjecture for some cases.
\begin{Con}[Craw--Ishii conjecture]\label{Con:Craw-Ishii conjecture}
Let $G$ be a finite subgroup in $\GL_3(\C)$. Suppose $X=\C^3/G$ has a smooth relative minimal model. Then every relative minimal model of $X$ is isomorphic to (an irreducible component of) $\mth$ for a suitable GIT parameter $\theta$.
\end{Con}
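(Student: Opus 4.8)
The plan is to reduce everything to toric geometry and then match two combinatorial chamber decompositions: the GIT wall-and-chamber structure on a suitable part of $\Theta$, and the subdivision of the relative movable cone of a smooth relative minimal model into its nef subcones. Since $G\subset\GL_3(\C)$ is abelian we may assume it is diagonal, so $X=\C^3/G=U_\sigma$ is an affine toric variety, with $\sigma$ the positive octant in $N_\R$ and $N\supseteq\Z^3$ the ``age'' overlattice. By the toric description of moduli of $\theta$-stable $G$-constellations (Craw--Maclagan--Thomas; \cite{CI}), $\mth$ is toric; because it may be reducible when $G\not\subseteq\SL_3(\C)$, I work throughout with its \emph{coherent component} $\mth^{\circ}$, the closure of the locus of free $G$-orbits, which is a projective toric variety $U_{\Sigma_\theta}$ with $\Sigma_\theta$ a fan refining $\sigma$ depending only on the GIT chamber of $\theta$; here $\mzero\cong X$ and $\mth^{\circ}\to X$ is projective birational. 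Conversely, as the relative MMP over the toric $X$ is torus-equivariant and all minimal models of a threefold are connected by flops (Kawamata), every relative minimal model $Y\to X$ is a toric $U_\Sigma$ with $\Sigma$ refining $\sigma$, and smoothness of $Y$ says $\Sigma$ is a smooth fan. The minimal-model property becomes combinatorial: the rays of $\Sigma$ may be chosen only from the finite set $\mathcal E$ of \emph{essential} lattice points of $\sigma$ (those $v$ with discrepancy $a(E_v,X)\le 0$), each maximal cone of $\Sigma$ must contain no lattice point beyond its primitive generators (terminality of $U_\Sigma$), and $\Sigma$ must satisfy the convexity condition that makes $K_{U_\Sigma}$ relatively nef.

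On the GIT side, varying $\theta$ gives a wall-and-chamber structure on $\Theta$, and $\theta\mapsto(\text{polarisation of }\mth^{\circ})$ defines a piecewise-linear map from $\Theta$ to the relative N\'eron--Severi space of a fixed model. The core claim is that, restricted to the locus $\Theta_{\mathrm{adm}}\subseteq\Theta$ of \emph{admissible} parameters — those for which $\mth^{\circ}$ is a relative minimal model of $X$, i.e.\ $U_{\Sigma_\theta}$ is terminal with $K_{U_{\Sigma_\theta}}$ relatively nef — this map is onto the relevant part of the relative movable cone $\operatorname{Mov}(Y/X)$ and identifies the GIT chambers, respectively their walls, with the nef cones of the $\Q$-factorial, respectively non-$\Q$-factorial, relative minimal models of $X$. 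Granting this, a given smooth relative minimal model $Y$ is $\mth^{\circ}$ for any $\theta$ in the interior of the corresponding chamber, and every other relative minimal model is recovered from a $\theta$ lying on the appropriate wall; this is exactly the statement of the conjecture for abelian $G$ possessing a smooth relative minimal model.

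Two points need genuine work, and I expect them to be what restricts the theorem to special $G$. First, there is no Bridgeland--King--Reid equivalence \cite{BKR} available in the $\GL_3$ case to pin $\mth$ to a fixed resolution or to force each wall-crossing to be a flop; $X$ may be non-canonical, so $\mth^{\circ}\to X$ is generally not crepant and one must track which essential divisors get extracted; and the distinguished chamber producing $\GHilb{3}$ need no longer yield a minimal model. Second — and this is the real obstacle — one must prove that $\Theta_{\mathrm{adm}}$ is large enough: that starting from one explicit member of the family $\{\mth^{\circ}\}$ (for instance the economic resolution of a cyclic quotient $\tfrac1r(1,a,r-a)$, realised as $\mth^{\circ}$ for an explicit $\theta$) every other smooth relative minimal model can be reached by crossing admissible walls, i.e.\ that wall-crossing in $\Theta$ realises every flop linking smooth minimal models. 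My approach to this would be inductive, via a ``rounding-down'' construction relating the combinatorics of $\tfrac1r(1,a,r-a)$ — the relevant fans and the gadgets $\mathfrak C(r,a)$, $\mathfrak S(r,a)$ built out of $\mathcal E$ — to a lower-level quotient, and transporting along this reduction both the list of smooth minimal models and the GIT chamber structure, so that the base cases are checked by hand and the inductive step only re-glues two chambers across a single wall. The chief difficulty is controlling the behaviour of $\mathcal E$ and of the terminality and convexity conditions under rounding down — above all ruling out that a new divisor becomes essential, so that $\mth^{\circ}$ remains a relative minimal model carrying precisely the expected exceptional divisors. For groups where this control is unavailable the chamber geometry of $\Theta$ becomes too intricate to pin down explicitly, which is why the conjecture is proved only in special cases.
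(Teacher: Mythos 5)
The statement you are trying to prove is a conjecture: the paper itself does not prove it in this generality, but only establishes it for two explicit families of cyclic groups (Theorem~\ref{Intro_a:main theorem}), and your proposal, by your own admission, does not prove it either. The central assertion of your plan --- that the map from the admissible locus $\Theta_{\mathrm{adm}}$ to the relative N\'eron--Severi space identifies GIT chambers with the nef cones inside $\operatorname{Mov}(Y/X)$, so that every flop between relative minimal models is realised by an admissible wall-crossing --- is not a lemma you can defer: it \emph{is} the conjecture, and in the $\GL_3$ setting there is no tool to establish it. The original Craw--Ishii argument for $\SL_3(\C)$ rests on the Bridgeland--King--Reid derived equivalence to start from $\GHilb{3}$ (a crepant resolution) and to show each flop is again a moduli space after crossing a wall; you correctly note this is unavailable here, but you offer no substitute. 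Worse, there is no canonical starting chamber: as recorded in the paper, for $\frac{1}{12}(1,2,3)$ the scheme $\GHilb{3}$ is smooth but not a relative minimal model, and for $\frac{1}{24}(1,3,5)$ it is not even smooth, so even producing \emph{one} $\theta$ with $\yth$ a given minimal model is already nontrivial and is exactly where the real work lies. You also silently restrict to abelian $G$ (``we may assume it is diagonal''), whereas the conjecture as stated concerns arbitrary finite subgroups of $\GL_3(\C)$; for nonabelian $G$ the toric reduction and the Craw--Maclagan--Thomas birational component are not available in the form you use.

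For comparison, the paper's strategy for its special cases is deliberately local and constructive rather than global wall-and-chamber: it performs the star subdivision at $v=\frac{1}{r}(1,a,b)$, uses the round down functions $\phi_k$ to pull back $G_k$-bricksets (supplied by the $\SL_3$ Craw--Ishii theorem or smoothness on the pieces $U_k$) to a $G$-brickset for $Y\to X$ (Theorem~\ref{Thm:subdivision and brickset}), and then writes down an explicit parameter $\theta=\theta_P+m\vtheta$, with $\theta_P$ solving the pushforward equations~(\ref{Eqtn:partial solution}) and $\vtheta$ enjoying the properties of Remark~\ref{Rem:key properties of vartheta}, which makes every brick $\theta$-stable; Proposition~\ref{Prop:if we have a brickset enough to show there exists theta} then gives $Y\cong\yth$. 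This sidesteps any analysis of the full chamber decomposition of $\Theta$, which is precisely the part of your outline that is unsubstantiated. Your rounding-down induction is in the spirit of the paper's reduction, but as written your argument has the conjecture's content concentrated in an unproved ``core claim,'' so it does not constitute a proof.
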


On the other hand, $\mth$ need not be irreducible in general\cite{CMTb}. However, if $G$ is abelian, Craw, Maclagan and Thomas~\cite{CMT} showed that $\mth$ has a unique irreducible component $\yth$ containing the torus $(\C\mul)^n/G$ for generic $\theta$. Furthermore, they proved that $\yth$ can be obtained by variation of GIT from $\C^n/G$. The component $\yth$ is called the \emph{birational component} of $\mth$. 

\begin{Thm}[Main Theorem]\label{Intro_a:main theorem}
Let $G\subset \GL_3(\C)$ be the finite group of type $\frac{1}{r}(1,a,b)$ with $b$ coprime to $a$ satisfying one of the following:
\begin{enumerate}
\item $r=abc+a+b+1$ for some positive integer $c$;
\item $r=abc+a-2b+1$ and $b=ak+1$ for some positive integers $c,k$ with $c\geq 2$ and $a \geq 3$.
\end{enumerate}
Then every relative minimal model $Y\rightarrow X:=\C^3/G$ is isomorphic to the birational component $\yth$ of the moduli space $\mth$ of $\theta$-stable $G$-constellations for a suitable parameter~$\theta$.
\end{Thm}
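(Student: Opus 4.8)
The plan is to pass to the toric picture, determine all relative minimal models of $X$ explicitly, and match each of them with a birational component $\yth$.

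Since $G$ is cyclic, $X=\C^3/G$ is the affine toric variety $U_\sigma$ for the cone $\sigma=\Cone(e_1,e_2,e_3)\subset N_\R$, where $N=\Z^3+\Z\cdot\tfrac1r(1,a,b)$; let $m_0$ be the linear form with $m_0(e_i)=1$, so that $m_0$ computes the age. A (projective) relative minimal model $f\colon Y\to X$ is then toric, given by a regular simplicial subdivision $\Sigma$ of $\sigma$, and --- using that $Y$ is $\Q$-factorial terminal with $K_Y$ $f$-nef --- the negativity lemma forces every exceptional prime divisor of $f$ to have discrepancy $\le 0$, hence every lattice point of $\sigma\cap N$ of age $\le 1$ to span a ray of $\Sigma$, while $K_Y$ is $f$-nef exactly when the piecewise-linear function on $\Sigma$ recording the discrepancies $\operatorname{age}(v)-1$ of the rays is concave in the appropriate sense. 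Thus relative minimal models of $X$ correspond bijectively to the simplicial regular refinements, with ray set exactly $\{v\in\sigma\cap N:\operatorname{age}(v)\le1\}$, of the coarsest such concave subdivision $\Sigma_0$; any two of them differ by a flop (this is the instance of Kawamata's theorem here), and there are finitely many. On the other side, since $G$ is abelian, Craw--Maclagan--Thomas~\cite{CMT} realise $\yth$ for generic $\theta$ as a toric variety obtained from $X$ by variation of GIT, with fan a coherent triangulation $\Sigma(\theta)$ built out of a subset of the same rays, and the round--down map $\theta\mapsto\Sigma(\theta)$ is explicit. The theorem is therefore equivalent to: every subdivision $\Sigma$ arising from a relative minimal model of $X$ occurs as $\Sigma(\theta)$ for some $\theta$.

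The second step is the explicit combinatorics under hypotheses (1) and (2). Evaluating $\operatorname{age}(g^k)=\tfrac1r\bigl(\overline k+\overline{ka}+\overline{kb}\bigr)$, with $\overline x\in\{0,\dots,r-1\}$ the residue mod $r$, one reads off the age $\le 1$ lattice points: the three vertices $e_i$, a chain of points on the edges of the triangle $\Delta=\operatorname{conv}(e_1,e_2,e_3)$ controlled by the Hirzebruch--Jung continued fractions attached to $r$, $a$, $b$, and one or more interior rays through $\tfrac1r(1,a,b)$ and its multiples. The conditions on $r$ are calibrated precisely so that the resulting marked triangle admits a unimodular triangulation --- which is why $X$ has a smooth relative minimal model --- and, crucially, so that the list of simplicial regular refinements of $\Sigma_0$ is short and explicit: a single model in the simplest cases, and in general a short chain of flops whose length is read off from $a$, $b$, $c$ (and $k$ in case (2)). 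In particular the list of relative minimal models can be written down completely, and they are all connected by a sequence of flops.

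It remains to realise this list on the GIT side. Unlike the $\SL_3$ case, $\GHilb{3}$ is not a relative minimal model here --- it over-resolves the (non-canonical) singularity --- so it cannot serve as an anchor; instead I would write down an explicit generic $\theta_0$ and check, via the round--down recipe, that $\Sigma(\theta_0)$ is one designated member of the list, so that $Y_{\theta_0}$ is a relative minimal model of $X$. If the list is a single model this finishes the argument; otherwise I would propagate along the flop chain: crossing the wall of $\Theta$ adjacent to the chamber of $\theta_0$ that keeps $\yth$ within the locus of relative minimal models replaces $\yth$ by its flop, and iterating reaches every member. Equivalently, one identifies the restriction to this ``minimal-model locus'' of the $\Theta$-chamber structure with the decomposition of the relative movable cone of $Y$ into the nef cones of its minimal models; this, together with connectedness of that decomposition and the finiteness of relative minimal models, gives the theorem.

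The main obstacle is precisely this matching of wall structures. A priori the chambers of $\Theta$ only refine the movable-cone decomposition, so one must rule out that passing between two consecutive minimal models forces $\theta$ across a ``divisorial'' wall --- on which one of the mandatory (age $<1$) exceptional divisors is contracted and $\yth$ is no longer minimal --- or across a ``fake'' wall on which $\yth$ does not change at all. In the Gorenstein case Craw and Ishii~\cite{CI} dealt with this through a full case analysis of walls anchored at $\GHilb{3}$; with that anchor gone I would instead lean on the explicitness of the second step, matching each flop in the chain to a stability parameter by direct computation with the round--down map and so bypassing a general wall classification. The subtlest point inside this is to verify that the sub-junior divisor (the one of age $<1$) remains $\theta$-stable all along the chain, since that is what prevents one from slipping out of the space of relative minimal models.
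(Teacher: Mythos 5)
Your reduction of the theorem to ``every minimal-model triangulation occurs as the fan of $\yth$ for some $\theta$'' is a fair reformulation, but both mechanisms you propose for producing the parameter are exactly the points that are not available, and this is where the proposal has a genuine gap. First, there is no explicit, computable map $\theta\mapsto\Sigma(\theta)$: Craw--Maclagan--Thomas only give that $\yth$ is a (not necessarily normal) toric variety obtained by variation of GIT; determining its fan for a \emph{given} $\theta$ amounts to finding all $\theta$-stable torus-invariant $G$-constellations, which is precisely the hard step (cf.\ the remark after Theorem~\ref{Thm:Y theta with G-bricks}), so ``check via the round-down recipe that $\Sigma(\theta_0)$ is a designated member of the list'' is not a procedure you have. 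Second, the propagation along the flop chain by wall-crossing is precisely the part of Craw--Ishii's argument that does not transfer to $\GL_3$: their wall analysis is anchored at $\GHilb{3}$, which here is not a relative minimal model (for $\frac{1}{12}(1,2,3)$ it is smooth but not minimal, for $\frac{1}{24}(1,3,5)$ not even smooth), and you give no argument that the wall of the chamber of $\theta_0$ corresponding to a flop of $\yth$ is not a divisorial or fake wall, nor that the chamber structure of $\Theta$ restricted to your ``minimal-model locus'' coincides with the decomposition of the relative movable cone. You name this as the main obstacle and say you would ``bypass a general wall classification by direct computation'', but that computation is the entire content of the theorem; as written the argument does not close.

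For comparison, the paper's proof never crosses a wall and treats each relative minimal model directly. Under hypotheses (i) and (ii) every relative minimal model $Y\to X$ admits a morphism to the star-subdivision variety $X_v$ at $v=\frac{1}{r}(1,a,b)$, whose affine charts are either smooth or Gorenstein abelian quotients $\C^3/G_k$; the Craw--Ishii theorem for abelian subgroups of $\SL_3(\C)$ applied to these charts gives $G_k$-bricksets and parameters $\theta^{(k)}$, which Theorem~\ref{Thm:subdivision and brickset} assembles, via the round-down functions, into a $G$-brickset for $Y\to X$. The parameter is then built by hand: Lemma~\ref{Lem:the existence of partial solns for star subdivision} (this is where $\gcd(a,b)=1$ enters) produces $\theta_P$ with $\roundstab(\theta_P)\equiv\theta^{(k)}$, an explicit auxiliary parameter $\vtheta$ with the properties of Remark~\ref{Rem:key properties of vartheta} is written down case by case, and $\theta=\theta_P+m\vtheta$ for $m\gg0$ stabilises every brick (Proposition~\ref{Prop:our stability for 1st case}); Proposition~\ref{Prop:if we have a brickset enough to show there exists theta} then gives $Y\iso\yth$. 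If you want to salvage your route, you would need to supply a substitute for the missing wall classification or, more realistically, an explicit stabilising parameter for each model in your enumerated list---which is essentially the $\theta_P+m\vtheta$ construction you have not provided.
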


Moreover the main theorem implies that the relative minimal model can be obtained by variation of GIT from $\C^3/G$ for the cases.

\begin{Cor}
In the situation as in Theorem~\ref{Intro_a:main theorem}, every relative minimal model $Y\rightarrow X=\C^3/G$ is obtained by variation of GIT quotient.
\end{Cor}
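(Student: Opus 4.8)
The plan is to deduce the corollary directly from Theorem~\ref{Intro_a:main theorem} together with the variation-of-GIT description of the birational component due to Craw, Maclagan and Thomas~\cite{CMT}. Recall that, by the construction of Craw and Ishii, the moduli space $\mth$ is itself a GIT quotient depending on the stability parameter $\theta\in\Theta$, so that moving $\theta$ across $\Theta$ is by definition a variation of GIT quotient. For $\theta=0$ one recovers the affine quotient $\mzero=\C^3/G$, and for generic $\theta$ the canonical projective morphism $\mth\to\mzero$ restricts on the birational component to a projective birational morphism $\yth\to\C^3/G$. The theorem of \cite{CMT} asserts precisely that, for $G$ abelian, $\yth$ is obtained from $\C^3/G$ by variation of GIT quotient, and that as $\theta$ ranges over the finitely many GIT chambers one obtains in this way the relevant birational models over $\C^3/G$.

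Now let $Y\to X=\C^3/G$ be a relative minimal model. By Theorem~\ref{Intro_a:main theorem} there is a parameter $\theta\in\Theta$ and an isomorphism $Y\iso\yth$ over $X$. This $\theta$ may be taken generic, since the fine moduli interpretation of $\mth$ that underlies the identification with $Y$ forces $\theta$ to lie in the interior of a GIT chamber. Hence $\yth$, and therefore $Y$, is obtained from $\C^3/G$ by variation of GIT quotient in the sense of \cite{CMT}, which is the assertion of the corollary.

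As Theorem~\ref{Intro_a:main theorem} is already established, essentially no new work is required here: the corollary is a formal consequence of the main theorem and the cited structural result on $\yth$. The only point meriting attention --- and it is a minor one --- is the choice of a generic representative of the parameter $\theta$, so that the \cite{CMT} description of the birational component applies verbatim; this genericity is not an extra hypothesis but is built into the moduli-theoretic identification furnished by the main theorem.
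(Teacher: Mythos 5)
Your argument is correct and matches the paper's (implicit) reasoning: the corollary is an immediate consequence of the Main Theorem together with Theorem~\ref{Thm:CMT}(ii) of Craw--Maclagan--Thomas, which states that $\yth\to\C^3/G$ is obtained by variation of GIT quotient. The genericity point you raise is indeed already built in, since $\yth$ is only defined for generic $\theta$ and the paper's Remark~\ref{Rem:generic parameter for 1st case} arranges a generic parameter by openness of the stability conditions.
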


\subsection{Overview of the article}
Let $G$ be a finite group in $\GL_3(\C)$. We say that $\varphi\colon Y\to X:=\C^3/G$ is a \emph{relative minimal model} if:
\begin{enumerate}
\item $Y$ has only $\Q$-factorial terminal singularities;
\item $\KY$ is $\varphi$-nef;
\item $\varphi$ is projective.
\end{enumerate}
For example, for the case where $G\subset \SL_3(\C)$, a projective crepant resolution $Y \to \C^3/G$ is a relative minimal model.

For the group $G$ of type $\frac{1}{r}(\alpha_1,\ldots,\alpha_n)$, i.e.
\[
G=\langle\diag(\epsilon^{\alpha_1},\ldots,\epsilon^{\alpha_n}) \st \epsilon^r=1 \rangle \subset \GL_n(\C),
\]
by toric geometry, the quotient variety $X=\C^n/G$ is given by the toric cone
\[
\sigma_+:=\Cone(e_1,\ldots,e_n)
\]
with the lattice 
\begin{equation*}
L = \Z^n + \Z\cdot \frac{1}{r}(\alpha_1,\ldots,\alpha_n).
\end{equation*}
Fix a primitive interior lattice point $v=\tfrac{1}{r}(a_1,\ldots,a_n) \in L \cap \sigma_+$.
The {\em star subdivision} of $\sigma_+$ at $v$ is the minimal fan containing the following $n$-dimensional cones $\sigma_k$ for $k=1,\ldots,n$:
\[
\sigma_k:=\Cone(e_1,\ldots,\hat{e_k},v,\ldots,e_n).
\]
Then the corresponding toric variety $X_v$ admits the induced projective toric morphism $\nu \colon X_v \to X=\C^n/G$. If $v$ generates $L/\Z^n$, then the affine open set $U_k$ of $X_v$ corresponding to $\sigma_k$ has a quotient singularity $\C^n/G_k$ for some abelian group $G_k$, eg.\ $G_1$ is the group of type $\frac{1}{a_1}(-r,a_2,\ldots,a_n)$. Note that the order of $G_k$ is smaller than the order of $G$. Thus we can use induction on the order of groups.

The groups in the main theorem satisfy:
\begin{enumerate}
\item every relative minimal model $Y\to X$ is smooth;
\item every relative minimal model $Y\to X$ has a projective morphism $Y \to X_v$ for $v=\frac{1}{r}(1,a,b)$.
\end{enumerate}

For the proof, the notion of $G$-bricks and round down functions is essential, which was recently developed in \cite{Thesis, Terminal}.

A \emph{$G$-brick} $\Gamma$ is a certain $\C$-basis of $\hh^0(\sF)$ for a torus invariant $G$-constellation $\sF$ on the birational component (see Definition~\ref{Def:G-prebrick}). We say that $\Gamma$ is \emph{$\theta$-stable} if the corresponding $G$-constellation $\sF$ is $\theta$-stable. Using suitable $G$-bricks, we are able to describe an affine local chart of the birational component $\yth$ (see Theorem~\ref{Thm:Y theta with G-bricks}).

The \emph{round down functions for the star subdivisions at $v$} are maps between monomial lattices compatible with the star subdivision. Using the round down functions, we produce a set $\gr$ of $G$-bricks from $G_k$-bricks.

Since the set $\gr$ is a \emph{$G$-brickset} (see Definition~\ref{Def:brickset}), it suffices to find a GIT parameter $\theta$ such that every $G$-brick $\Gamma \in \gr$ is $\theta$-stable. After finding a parameter $\theta$, we conclude that $Y$ is isomorphic to $\yth$ for some~$\theta$.

\subsubsection*{\bf Acknowledgement} I am deeply grateful to Miles Reid for his valuable advice and encouragement. I would like to thank Alastair Craw, Akira Ishii, Yukari Ito, Yujiro Kawamata 
for kind explanations. I am grateful to Sara Muhvi\'{c} for sharing her examples. I would like to thank the University of Warwick for its hospitality, where this work was done in June 2015.
\section{$G$-constellations and $G$-bricks}\label{Sec:G-constellations and G-bricks}

\subsection{Moduli spaces of $G$-constellations}
In this section, we briefly review moduli spaces of $\theta$-stable $G$-constellations (see e.g. \cite{CI,CMT,K94}).

Consider a finite diagonal group $G$ in $\GL_n(\C)$. 
\begin{Def}
A $G$-equivariant coherent sheaf $\sF$ on $\C^n$ is called a {\em $G$-constellation} if $\hh^0(\sF)$ is isomorphic to the regular representation~$\C[G]$ of $G$ as a $\C[G]$-module. 
\end{Def}
\begin{Rem}
For a free $G$-orbit $Z$ in $\C^3$, $\OZ$ is a $G$-constellation.
\erem
Define the GIT stability parameter space
\[
\Theta = \left\{\theta \in \Hom_{\Z} (R(G),\Q) \st \theta\left(\C[G]\right) =0 \right\}\!
\]
where $R(G):=\bigoplus_{\rho \in \Irr G} \Z\cdot \rho$ is the representation space of $G$.
\begin{Def}
For a stability parameter $\theta \in \Theta$, we say that:
\begin{enumerate}
\item a $G$-constellation $\sF$ is {\em $\theta$-semistable} if $\theta(\sG) \geq 0$ for every subsheaf $\sG \varsubsetneq \sF$;
\item a $G$-constellation $\sF$ is {\em $\theta$-stable} if $\theta(\sG) > 0$ for every subsheaf $0\neq \sG \varsubsetneq \sF$;
\item $\theta$ is {\em generic} if every $\theta$-semistable object is $\theta$-stable.
\end{enumerate}
\end{Def}
By King\cite{K94}, it is known that if $\theta$ is generic, then there exists a quasiprojective scheme $\mth$ which is a fine moduli space of $\theta$-stable $G$-constellations.

Moreover, in~\cite{IN}, Ito--Nakajima showed that $\mth$ is canonically isomorphic to $\GHilb{n}$ for $\theta\in \Theta_+$ where 
\begin{equation}\label{Eqtn:Stab for G-Hilb}
\Theta_+ := \left\{\theta \in \Theta \st \theta\left(\rho\right) >0 \text{ for } \rho \neq \rho_0 \right\}.
\end{equation}
In particular, $\mth$ can be obtained by variation of GIT from $\GHilb{n}$.

Assume that $\theta$ is generic. Let $\mth$ denote the fine moduli space of $\theta$-stable $G$-constellations. Craw, Maclagan and Thomas showed that the moduli space $\mth$ need not be irreducible~\cite{CMTb}. Furthermore, they proved that $\mth$ has a distinguished component $\yth$ which is birational to $\C^n/G$ if $G$ is abelian~\cite{CMT}.
\begin{Thm}[Craw--Maclagan--Thomas{\cite{CMT}}]\label{Thm:CMT}
Assume that $G$ be a finite abelian group in $\GL_n(\C)$. For a generic parameter $\theta \in \Theta$, the moduli space $\mth$ has a unique irreducible component $\yth$ that contains the torus $T:=(\C\mul)^n/G$. Moreover:
\begin{enumerate}
\item $\yth$ is a not-necessarily-normal toric variety which is birational to the quotient variety $\C^n/G$;
\item there is a projective morphism $\yth \to \C^n/G$ obtained by variation of GIT quotient.
\end{enumerate}
\end{Thm}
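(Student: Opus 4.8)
The plan is to prove Theorem~\ref{Thm:CMT} (the Craw--Maclagan--Thomas description of the birational component) by realizing $\yth$ as a multigraded Hilbert scheme and running variation of GIT from $\C^n/G$. First I would fix notation: since $G\subset\GL_n(\C)$ is abelian, $\C^n$ is a toric variety and the $G$-action is toric, so $\C[x_1,\ldots,x_n]$ is graded by the character group $G^\vee=\Hom(G,\C^\times)$, and a $G$-constellation supported at the origin corresponds (after choosing a lift) to a $G^\vee$-graded quotient of $\C[x_1,\ldots,x_n]$ with Hilbert function equal to $1$ in every degree. The torus $T=(\C^\times)^n/G$ sits inside $\mth$ via the free orbits $\sO_t$ for $t$ in the big torus; I want to show the closure $\yth=\overline{T}$ is a single irreducible component and carries the stated toric and VGIT structure.

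The key steps, in order, are: (1) Identify $\yth$ with (a component of) the multigraded Hilbert scheme parametrizing $G^\vee$-homogeneous ideals $I\subset\C[x_1,\ldots,x_n]$ with $\C[x_1,\ldots,x_n]/I\cong\C[G]$; for generic $\theta$ the $\theta$-stable locus is exactly the one where these quotients become $G$-constellations, and Craw--Ishii's GIT construction exhibits $\mth$ as a GIT quotient of a representation space $\sM$ of a McKay-type quiver by $\prod_\rho\GL_1=(\C^\times)^{|G|}/\C^\times$. (2) Inside that quiver representation space, the big torus $T$ acts, and the points lying over free orbits form a single $T$-orbit closure; taking its (not-necessarily-normal) toric closure $\zzz$ inside $\sM$ and then its GIT quotient image gives a toric subvariety $\yth\subseteq\mth$ which is irreducible by construction and birational to $\C^n/G$ since the generic point is a free orbit. (3) Uniqueness: any irreducible component of $\mth$ meeting $T$ must contain $\overline{T}=\yth$, and a dimension count (each component of $\mth$ has dimension $\geq n$ by the deformation theory of $G$-constellations, and $\yth$ already has dimension $n$) forces equality, so $\yth$ is the unique component through the torus. (4) The VGIT statement: one checks that for $\theta\in\Theta_+$ the component $\yth$ is $\GHilb{n}$ (via Ito--Nakajima), that $\GHilb{n}\to\C^n/G$ is projective and toric, and that varying $\theta$ across the GIT chamber structure on $\Theta$ induces only birational modifications of $\yth$ over $\C^n/G$ that are themselves VGIT wall-crossings; projectivity of $\yth\to\C^n/G$ then follows because $\C^n/G$ is the GIT quotient at the trivial parameter $\theta=0$ and $\yth$ is a quotient at a nearby generic chamber, with the natural morphism between them projective by the general VGIT formalism (the polarizing line bundle on $\yth$ is relatively ample over $\C^n/G$).

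The main obstacle I expect is step (3), the uniqueness and the ``not-necessarily-normal'' subtlety: one must show that $\mth$ has no other component passing through the torus, and that $\yth$, defined as a toric orbit closure inside the quiver representation space, really is cut out as a reduced subscheme and maps isomorphically (not just birationally) onto its image in the GIT quotient. The quiver representation space can have non-reduced or lower-dimensional components, and the torus-fixed data needed to pin down $\yth$ is exactly the combinatorial input (fans, the lattice $L$, and the characters) that must be tracked carefully; I would handle this by working directly with the torus-invariant ideals, classifying them by their standard monomial bases (a ``brick''-type combinatorial structure, foreshadowing Section~\ref{Sec:G-constellations and G-bricks}), and showing each such basis corresponds to a cone in a fan refining $\sigma_+$, so that $\yth$ is the toric variety of that fan. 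The remaining steps are comparatively routine: identifying the multigraded Hilbert scheme component, citing King's GIT construction of $\mth$, and invoking the standard VGIT package of Thaddeus and Dolgachev--Hu for the projectivity and the ``obtained by variation of GIT'' conclusion.
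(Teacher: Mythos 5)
This statement is not proved in the paper at all: it is quoted verbatim from Craw--Maclagan--Thomas \cite{CMT}, so the only fair comparison is with their argument, whose overall architecture (King's GIT construction of $\mth$ from the McKay quiver representation space, the closure of the torus of free orbits as the distinguished component, and variation of GIT down to the affine quotient at $\theta=0$, which is $\C^n/G$) your steps (2) and (4) do reproduce in outline. However, your proposal has two genuine gaps. First, step (1) is wrong as stated: a $\theta$-stable $G$-constellation is in general \emph{not} a cyclic module, i.e.\ not a $G^\vee$-graded quotient of $\C[x_1,\ldots,x_n]$, so $\mth$ (and even $\yth$) cannot be identified with a multigraded Hilbert scheme of homogeneous ideals; that identification is only valid for $\theta\in\Theta_+$, where one recovers $\GHilb{n}$. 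This is exactly why the present paper works with $G$-bricks made of \emph{Laurent} monomials: the torus-invariant constellation $C(\Gamma)$ attached to, say, $\Gamma_1$ in Example~\ref{Eg:Calculating G-graphs in 1/20(1,3,4)} contains $y^{-2}z^2$ and $1$ and needs more than one generator, so it is not of the form $\C[x_1,\ldots,x_n]/I$. The CMT proof instead works throughout with the quiver representation scheme (cut out by the commutation relations) and an explicit lattice/toric ideal, not with ideals in the polynomial ring.

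Second, your uniqueness argument in step (3) does not go through. The claim that any component \emph{meeting} $T$ must contain $\overline{T}$ is false (meeting is weaker than containing), and the dimension count only gives a lower bound $\dim\geq n$ for every component, which is perfectly consistent with a component of dimension $>n$ properly containing $\overline{T}$; so nothing so far shows $\overline{T}$ is itself a component, let alone the unique one through $T$. What is actually needed, and what the CMT-style argument supplies, is that the locus of constellations supported on free $G$-orbits is \emph{open} in $\mth$ and that its points are smooth points of the moduli space (the tangent space there has dimension $n$, since over the free locus every stable constellation is the structure sheaf of its orbit); openness plus irreducibility of $T$ then forces any component containing $T$ to equal $\overline{T}$. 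Without that openness/smoothness input, both the componenthood and the uniqueness assertions remain unproved. The VGIT projectivity in step (4) is essentially the standard argument and is fine once the component is correctly identified.
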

\begin{Def}
The unique irreducible component $\yth$ in Theorem~\ref{Thm:CMT} is called the {\em birational component} of $\mth$. 
\end{Def}
\subsection{Cyclic quotients and toric lattices}
Consider the group $G$ of type $\frac{1}{r}(\alpha_1,\ldots,\alpha_n)$, i.e.
\[
G=\langle\diag(\epsilon^{\alpha_1},\ldots,\epsilon^{\alpha_n}) \st \epsilon^r=1 \rangle \subset \GL_n(\C).
\]
As $G$ is abelian, the set of irreducible representations of~$G$ can be identified with the character group~$G\dual:=\Hom (G, \C\mul)$ of~$G$.

For the group $G$ of type $\frac{1}{r}(\alpha_1,\ldots,\alpha_n)$, define the lattice 
\begin{equation*}
L = \Z^n + \Z\cdot \frac{1}{r}(\alpha_1,\ldots,\alpha_n).
\end{equation*}
Set $\zzz = \Z^n \subset L$. Consider the two dual lattices $M=\Homz(L,\Z)$, $\lau=\Homz(\zzz,\Z)$. Note that we can consider the two dual lattices $\lau$ and $M$ as Laurent monomials and $G$-invariant Laurent monomials, respectively. 
 
The embedding of~$G$ into the torus $(\C\mul)^n\subset \GL_n(\C)$ induces a surjective homomorphism
\[
\wt \colon \lau \longrightarrow G\dual
\]
with kernel $M$. 
Note that there are two isomorphisms of abelian groups $L/\Z^n \rightarrow G$ and $\lau / M \rightarrow G\dual$.

Let $\mon$ denote genuine monomials in $\lau$, i.e.\
\begin{equation*}
\mon=\{x_1^{m_1}\cdots x_n^{m_n} \in \lau \st m_i \geq 0 \text{ for all $i$}\}.
\end{equation*}
For a set $A \subset \C[x_1^{\pm},\ldots,x_n^{\pm}]$, $\langle A \rangle$ denotes the $\C[x_1,\ldots,x_n]$-submodule of $\C[x_1^{\pm},\ldots,x_n^{\pm}]$ generated by $A$.

Let $\{e_1,\ldots,e_n\}$ be the standard basis of $\Z^n$ and $\sigma_{+}$ the cone generated by $e_1,\ldots,e_n$. By toric geometry, the corresponding affine toric variety 
$U_{\sigma_{+}}=\Spec \C[\sigma_{+}\dual \cap M]$ is the quotient variety $X=\C^3/G$.

\subsection{$G$-bricks and the birational component $\yth$}
In this section, we review the notion of $G$-bricks introduced in \cite{Thesis, Terminal}. Using $G$-bricks, we can describe an affine local chart of the birational component $\yth$.

\begin{Def}\label{Def:G-prebrick}\index{$G$-prebrick}
A {\em $G$-prebrick} $\Gamma$ is a subset of Laurent monomials in $\C[x_1^{\pm},\ldots,x_n^{\pm}]$ satisfying:
\begin{enumerate}
\item the monomial $\one$ is in $\Gamma$;
\item for each weight $\rho \in G\dual$, there exists a unique Laurent monomial $\bm_{\rho}\in \Gamma$ of weight $\rho$, i.e.\ $\wt\colon\Gamma \rightarrow G\dual$ is bijective;
\item if $\bn' \cdot \bn \cdot \bm_{\rho} \in \Gamma$ for $\bm_{\rho}\in \Gamma$ and $\bn, \bn' \in \mon$, then $\bn \cdot \bm_{\rho} \in \Gamma$;
\item the set $\Gamma$ is {\em connected} in the sense that for any element $\bm_{\rho}$, there is a (fractional) path in $\Gamma$ from $\bm_{\rho}$ to $\one$  whose steps consist of multiplying or dividing by one of $x_i$.
\end{enumerate}
\end{Def}
For a Laurent monomial $\bm \in \lau$, let $\wtga(\bm)$ denote the unique element $\bm_{\rho}$ in $\Gamma$ of the same weight as $\bm$.

For a $G$-prebrick $\Gamma=\{\bm_{\rho}\}$, we define $S(\Gamma)$ to be the subsemigroup of~$M$ generated by $\dfrac{\bn \cdot \bm_{\rho}}{\wtga(\bn \cdot \bm_{\rho})}$ for all $\bn \in \mon$, $\bm_{\rho}\in \Gamma$. We define a cone $\sigma(\Gamma)$ in $L_{\R}=\R^n$ as follows:
\begin{align*}
\sigma(\Gamma)
&=S(\Gamma)\dual\\
&=\left\{ \bu \in L_{\R} \stB  \left\langle \bu, \frac{\bn \cdot \bm_{\rho}}{\wtga(\bn \cdot \bm_{\rho})}\right\rangle \geq 0, \quad \forall \bm_{\rho}\in \Gamma, \ \bn \in \mon \right\}\!.
\end{align*}
As is proved in \cite{Terminal}, the semigroup $S(\Gamma)$ is finitely generated as a semigroup. Thus the semigroup~$S(\Gamma)$ defines an affine toric variety.
Define two affine toric varieties:
\begin{align*}
U(\Gamma) &:= \Spec \C[S(\Gamma)],\\
U^{\nu}(\Gamma)&:= \Spec \C[\sigma(\Gamma)\dual\cap M]. 
\end{align*}
\begin{Def}\label{Def:border bases}
For a $G$-prebrick $\Gamma$,
\[
B(\Gamma) := \big\{ x_i \cdot \bm_{\rho} \st \bm_{\rho} \in \Gamma,
\big\} \!\setminus \!\Gamma
\]
is called the {\em Border bases} of $\Gamma$.
\end{Def}
Let $\Gamma$ be a $G$-prebrick. Define 
\[
C(\Gamma) := \langle \Gamma \rangle / \langle B(\Gamma)\rangle.
\]
The module $C(\Gamma)$ is a torus invariant $G$-constellation. A submodule $\sG$ of $C(\Gamma)$ is determined by a subset $A \subset \Gamma$, which forms a $\C$-basis of~$\sG$.
\begin{Lem}\label{Lem:combinatorial description of submodule}
Let $A$ be a subset of $\Gamma$. The following are equivalent.
\begin{enumerate}
\item The set $A$ forms a $\C$-basis of a submodule of $C(\Gamma)$.
\item If $\bm_{\rho} \in A$, then $x_i \cdot \bm_{\rho} \in \Gamma$ implies $x_i \cdot \bm_{\rho} \in A$ for all $i$.
\end{enumerate}
\end{Lem}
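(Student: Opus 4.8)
The plan is to first make the $\C[x_1,\ldots,x_n]$-module structure of $C(\Gamma)$ explicit on the basis $\Gamma$, and then to observe that the equivalence is a purely combinatorial statement about which subsets of $\Gamma$ are stable under the operators $x_i\cdot(-)$. For the module structure I would verify that the images of the monomials of $\Gamma$ form a $\C$-basis of $C(\Gamma)$ and that, for $\bm_\rho\in\Gamma$,
\[
x_i\cdot\bm_\rho=
\begin{cases}
x_i\bm_\rho & \text{if }x_i\bm_\rho\in\Gamma,\\
0 & \text{if }x_i\bm_\rho\in B(\Gamma).
\end{cases}
\]
Both claims follow from the prebrick axioms of Definition~\ref{Def:G-prebrick}: the $\C$-vector space $\langle\Gamma\rangle$ is spanned by the Laurent monomials $\bn\cdot\bm_\rho$ with $\bn\in\mon$ and $\bm_\rho\in\Gamma$, and an induction on $\deg\bn$ using axiom (iii) together with the definition of $B(\Gamma)$ shows that each such monomial lies either in $\Gamma$ or in $\langle B(\Gamma)\rangle$, so the images of $\Gamma$ span $C(\Gamma)$. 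Conversely, no monomial of $\Gamma$ can lie in $\langle B(\Gamma)\rangle$: if $\bm_\rho\in\Gamma$ were equal to $\bn\cdot x_j\cdot\bm_\sigma$ for some $\bn\in\mon$ and $\bm_\sigma\in\Gamma$ with $x_j\bm_\sigma\in B(\Gamma)$, then axiom (iii) would force $x_j\bm_\sigma\in\Gamma$, a contradiction. Since $\Gamma$ has exactly $|G\dual|=|G|$ elements by axiom (ii), while $\dim_\C C(\Gamma)=|G|$ because $C(\Gamma)$ is a $G$-constellation, the images of the monomials of $\Gamma$ form a basis, and the displayed multiplication rule then reads off from the definitions. (This is essentially contained in \cite{Terminal} and could simply be quoted.)

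Granting this, the two implications are each short. For (i)$\Rightarrow$(ii): if $A\subset\Gamma$ is a $\C$-basis of a submodule $\sG\subseteq C(\Gamma)$ and $\bm_\rho\in A$ satisfies $x_i\bm_\rho\in\Gamma$, then $x_i\cdot\bm_\rho=x_i\bm_\rho$ lies in $\sG$; since $\sG$ is spanned by the subset $A$ of the monomial basis $\Gamma$, the basis monomial $x_i\bm_\rho$ must already belong to $A$. For (ii)$\Rightarrow$(i): if $A\subset\Gamma$ satisfies (ii), let $\sG$ be the $\C$-subspace of $C(\Gamma)$ with basis $A$; for $\bm_\rho\in A$ the multiplication rule makes $x_i\cdot\bm_\rho$ equal to $0$ or to the basis monomial $x_i\bm_\rho\in\Gamma$, and in the latter case (ii) gives $x_i\bm_\rho\in A$, so $x_i\cdot\bm_\rho\in\sG$ in either case; hence $\sG$ is stable under every $x_i$, i.e.\ it is a $\C[x_1,\ldots,x_n]$-submodule, and $A$ is a $\C$-basis of it.

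There is no genuine obstacle once the module structure is in place. The one point that needs care is that ``submodule of $C(\Gamma)$'' here means $G$-equivariant submodule: such a submodule is automatically $G\dual$-graded, and since $C(\Gamma)$ has one-dimensional weight spaces (indexed by $\Gamma$ via axiom (ii)), it is spanned by a subset of $\Gamma$. This is precisely the remark recorded just before the statement, which I would invoke rather than reprove. The only other thing to watch is that the prebrick axioms, chiefly the ``staircase'' axiom (iii), are threaded correctly through the module-structure step; this is routine, but it is where the combinatorial definition of $\Gamma$ is actually used.
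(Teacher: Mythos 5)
Your argument is correct, and it is exactly the reasoning the paper leaves implicit: the lemma is stated without proof, being regarded as immediate once one knows that the images of $\Gamma$ form a $\C$-basis of $C(\Gamma)=\langle \Gamma\rangle/\langle B(\Gamma)\rangle$ with $x_i\cdot\bm_{\rho}$ equal to $x_i\bm_{\rho}$ or $0$ according to whether $x_i\bm_{\rho}\in\Gamma$ or $x_i\bm_{\rho}\in B(\Gamma)$ (a fact established in \cite{Thesis,Terminal} via the prebrick axioms, just as you re-derive it). So your proposal matches the intended proof; no gaps.
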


\begin{Def}
Let $\Gamma$ be a $G$-prebrick.
\begin{enumerate}
\item A $G$-prebrick $\Gamma$ is called a {\em $G$-brick} if the affine toric variety $U(\Gamma)$ contains a torus fixed point.
\item A $G$-prebrick $\Gamma$ is called {\em $\theta$-stable} if the torus invariant $G$-constellation $C(\Gamma)$ is $\theta$-stable.
\end{enumerate}
\end{Def}
Note that from toric geometry, $U(\Gamma)$ has a torus fixed point if and only if $S(\Gamma) \cap (S(\Gamma))\inv=\{\one\}$, i.e.\ the cone $\sigma(\Gamma)$ is an $n$-dimensional cone.
\begin{Prop}[\cite{Thesis}]\label{Prop:open immersion}
For generic $\theta$, let $\Gamma$ be a $\theta$-stable $G$-brick and $\yth$ the birational component of~$\mth$. There exists an open immersion
\[
\begin{array}{ccc}
U(\Gamma)=\Spec \C[S(\Gamma)]  & \hookrightarrow& \yth.
\end{array}
\]
\end{Prop}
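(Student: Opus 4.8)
The plan is to realise $U(\Gamma)$ as a slice transverse to the gauge group in King's GIT description of $\mth$, then to check that the slice is $\theta$-stable, descends to an open subset of $\mth$, and lies in $\yth$. Recall that for the abelian group $G$ one has $\mth\iso\operatorname{Rep}^{s}_{\theta}(Q,I)\,\git_{\theta}\,G_{\delta}$, where $Q$ is the McKay quiver, $I$ the commutativity relations, the dimension vector is $(1,\dots,1)$, and $G_{\delta}=\bigl(\prod_{\rho\in G\dual}\C\mul\bigr)/\C\mul$; a point $B=(b_{i,\rho})$ of $\operatorname{Rep}(Q,I)$, with $b_{i,\rho}\in\C$, records the scalar by which $x_{i}$ sends the $\rho$-weight line to the $\rho\cdot\wt(x_{i})$-weight line, and $\sF_{B}$ is the associated $G$-constellation. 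Let $W_{\Gamma}\subseteq\operatorname{Rep}(Q,I)$ be the open $G_{\delta}$-invariant locus on which every $\bm_{\rho}\in\Gamma$ acts invertibly enough that the vectors $\bm_{\rho}\cdot\one$ form a $\C$-basis of $\bigoplus_{\rho}V_{\rho}$. By connectedness (Definition~\ref{Def:G-prebrick}(iv)) together with bijectivity of $\wt|_{\Gamma}$ (Definition~\ref{Def:G-prebrick}(ii)), the pairs $(\bm_{\rho},x_{i}\bm_{\rho})$ lying in $\Gamma$ span a connected graph on the vertex set $G\dual$, so $G_{\delta}\iso(\C\mul)^{|G|-1}$ acts simply transitively on the normalisations that make all these $\Gamma$-edges equal to $1$; this produces a locally closed section $\Sigma_{\Gamma}\subseteq W_{\Gamma}$ meeting each $G_{\delta}$-orbit in exactly one point. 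On $\Sigma_{\Gamma}$ the surviving coordinates are the border scalars $b_{i,\rho}$ with $x_{i}\bm_{\rho}\in B(\Gamma)$, and after the normalisation such a $b_{i,\rho}$ is exactly the value of $\dfrac{x_{i}\bm_{\rho}}{\wtga(x_{i}\bm_{\rho})}\in S(\Gamma)$; by Definition~\ref{Def:G-prebrick}(iii) and the definition of $S(\Gamma)$, the relations $I$ restricted to $\Sigma_{\Gamma}$ are precisely the relations of the semigroup algebra, whence $\Sigma_{\Gamma}\iso\Spec\C[S(\Gamma)]=U(\Gamma)$ (finite generation of $S(\Gamma)$ from \cite{Terminal}).

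Next I would show the whole slice is $\theta$-stable. The tautological family over $\Sigma_{\Gamma}=U(\Gamma)$ is $T$-equivariant, and since $\Gamma$ is a $G$-brick the cone $\sigma(\Gamma)$ is $n$-dimensional, so $U(\Gamma)$ has a unique torus-fixed point $x_{0}$; there the border scalars -- being non-units of the pointed semigroup $S(\Gamma)$ -- all vanish, so the fibre over $x_{0}$ is $\langle\Gamma\rangle/\langle B(\Gamma)\rangle=C(\Gamma)$, which is $\theta$-stable by hypothesis. Since $\theta$-stability is an open condition on the base (King), the $\theta$-stable locus in $U(\Gamma)$ is a nonempty $T$-invariant open subset containing $x_{0}$; its complement is closed and $T$-invariant, yet every nonempty closed $T$-invariant subset of an affine toric variety contains the torus-fixed point of its cone (it contains an orbit, hence the closure of that orbit, hence $x_{0}$). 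So the complement is empty, i.e.\ $W_{\Gamma}\subseteq\operatorname{Rep}^{s}_{\theta}(Q,I)$.

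It then remains to descend to $\yth$. For generic $\theta$ the torus $G_{\delta}$ acts freely with closed orbits on $\operatorname{Rep}^{s}_{\theta}(Q,I)$, so $\pi\colon\operatorname{Rep}^{s}_{\theta}(Q,I)\to\mth$ is a principal $G_{\delta}$-bundle; since $W_{\Gamma}$ is open, $G_{\delta}$-invariant and (lying in the stable locus) saturated, $\pi(W_{\Gamma})$ is open in $\mth$ and $\pi|_{W_{\Gamma}}$ is a $G_{\delta}$-bundle over it with section $\Sigma_{\Gamma}$; hence $\pi$ restricts to an isomorphism $U(\Gamma)\iso\Sigma_{\Gamma}\xrightarrow{\sim}\pi(W_{\Gamma})$ onto an open subscheme of $\mth$. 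Finally $W_{\Gamma}\iso G_{\delta}\times U(\Gamma)$ is irreducible, so $\pi(W_{\Gamma})$ is irreducible, and it meets the torus $T=(\C\mul)^{n}/G\subseteq\mth$ (take $B$ with all $b_{i,\rho}\neq0$, a free-orbit $G$-constellation), so the generic point of $\pi(W_{\Gamma})$ lies in $T$ and therefore in the unique component $\yth$ through $T$; as $\yth$ is closed in $\mth$ this forces $\pi(W_{\Gamma})\subseteq\yth$, open therein, which is the asserted open immersion $U(\Gamma)\hookrightarrow\yth$.

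The main obstacle is the slice construction of the first paragraph: verifying that the $G_{\delta}$-normalisation along the $\Gamma$-edges is well defined and simply transitive -- this is exactly where Definition~\ref{Def:G-prebrick}(ii)--(iv) are genuinely used, and where one must handle carefully that the elements of $\Gamma$ are Laurent, not genuine, monomials -- and verifying that the commuting relations $I$ cut out precisely $\Spec\C[S(\Gamma)]$ inside the slice. The remaining inputs (openness of $\theta$-stability, basic properties of good and geometric GIT quotients, the toric fixed-point argument) are standard.
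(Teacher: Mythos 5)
Your strategy (a $G_\delta$-slice inside King's quiver GIT picture) hinges on two assertions that you flag but never prove, and they are exactly where the content of the proposition lies. First, connectedness of the $\Gamma$-graph only gives \emph{uniqueness} of a normalisation with all $\Gamma$-edges equal to $1$ on each $G_\delta$-orbit; it does not give \emph{existence}. For existence you need every cycle of the $\Gamma$-graph to have trivial monodromy at every point of $W_\Gamma$: the commutation relations force this only for unit squares all four of whose corners lie in $\Gamma$ (or when the auxiliary arrows involved happen to be nonzero), and nothing in Definition~\ref{Def:G-prebrick} guarantees that a general cycle can be contracted through such squares. So the decomposition $W_\Gamma\iso G_\delta\times\Sigma_\Gamma$, on which your irreducibility and descent arguments rest, is unjustified (and your definition of $W_\Gamma$ via ``$\bm_\rho\cdot\one$ form a basis'' is itself not well posed for Laurent monomials without resolving this same monodromy issue). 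Second, the identification $\Sigma_\Gamma\iso\Spec\C[S(\Gamma)]$ requires that the relations $I$ restricted to the slice generate the \emph{entire} toric ideal of $S(\Gamma)$. What comes for free is only the opposite containment: the tautological family gives a surjection from the coordinate ring of the slice onto $\C[S(\Gamma)]$ (the border coordinates hit the degree-one generators), i.e.\ a closed immersion $U(\Gamma)\hookrightarrow\Sigma_\Gamma$. That the quiver relations already cut out the semigroup algebra is precisely the delicate point; it is the same phenomenon, studied in~\cite{CMTb}, responsible for $\mth$ being reducible: the commutation relations need not cut out the coherent component scheme-theoretically, and extra (saturation) equations are in general required.

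There is also a structural warning sign that the route proves too much: if your argument worked as written, $\pi(W_\Gamma)$ would be open in $\mth$ and contained in $\yth$, so $\yth$ would be a union of connected components of $\mth$; the paper itself notes this is only verified in small examples and is open in general, so a proof along these lines must leak exactly at the two points above. Note also that this paper does not reprove the proposition (it is quoted from~\cite{Thesis}), so the comparison can only be with the expected argument there: use the tautological family over $U(\Gamma)$ to get a classifying morphism $U(\Gamma)\to\mth$ -- your torus-fixed-point argument that every fibre over $U(\Gamma)$ is $\theta$-stable is correct and a genuinely useful ingredient here -- observe that the dense torus orbit parametrises free-orbit $G$-constellations so the image lies in $\yth$, and then prove open immersion by comparing with the Craw--Maclagan--Thomas toric description of the invariant affine charts of the coherent component, rather than by exhibiting $U(\Gamma)$ as a slice of a saturated open subset of $\rgssth$. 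As it stands, your proposal is incomplete at its central step.
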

\begin{Rem}
The $G$-brick $\Gamma$ forms a $\C$-basis of $G$-constellations parametrised by $U(\Gamma)$.\erem
\begin{Thm}[\cite{Thesis}]\label{Thm:Y theta with G-bricks}
Let $G\subset\GL_n(\C)$ be a finite diagonal group and $\theta$ a generic GIT parameter for $G$-constellations. Assume that $\gr$ is the set of all $\theta$-stable $G$-bricks.
\begin{enumerate}
\item The birational component $\yth$ of $\mth$ is isomorphic to the not-necessarily-normal toric variety  $\bigcup_{\Gamma \in \gr} U(\Gamma)$. 
\item The normalisation of $\yth$ is isomorphic to the normal toric variety whose toric fan consists of the $n$-dimensional cones $\sigma(\Gamma)$ for $\Gamma \in \gr$ and their faces.
\end{enumerate} 
\end{Thm}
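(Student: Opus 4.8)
The plan is to cover $\yth$ by the affine charts $U(\Gamma)$ attached to the $\theta$-stable $G$-bricks: Proposition~\ref{Prop:open immersion} already realises each $U(\Gamma)$ as an open subscheme of $\yth$, so the whole point is to check that these charts exhaust $\yth$; statement~(2) will then follow by passing to normalisations.

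First I would glue the charts. For each $\Gamma\in\gr$, Proposition~\ref{Prop:open immersion} gives an open immersion $U(\Gamma)=\Spec\C[S(\Gamma)]\hookrightarrow\yth$, and $U(\Gamma)$ contains the torus $T=(\C\mul)^n/G$ as a dense open subscheme, the immersion restricting to the tautological inclusion $T\hookrightarrow\yth$. Since $\yth$ is separated, any two such immersions agree on the overlap of their images (a morphism into a separated scheme is determined on a dense open subscheme), so they glue to an open immersion $\iota\colon V:=\bigcup_{\Gamma\in\gr}U(\Gamma)\hookrightarrow\yth$, and $V$ is a not-necessarily-normal toric variety. It remains to prove that $\iota$ is surjective.

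The heart of the matter is the torus-fixed locus. A $T$-fixed point $y_0\in\yth$ represents a torus-invariant $\theta$-stable $G$-constellation on the birational component, and by the $G$-brick formalism of \cite{Thesis,Terminal} such an object is exactly $C(\Gamma)$ for a unique $\theta$-stable $G$-brick $\Gamma$; as $\yth$ is a fine moduli space, $y_0$ is then the image under $\iota$ of the torus-fixed point of $U(\Gamma)$, so $y_0\in V$. To reach an arbitrary point $y\in\yth$, I would degenerate it to such a fixed point: $\yth\to X=\C^n/G$ is projective (Theorem~\ref{Thm:CMT}) and birational, and $X$ is the affine toric variety of the full-dimensional strongly convex cone $\sigma_{+}$, so the normalisation $\widetilde{\yth}$ is a toric variety whose fan has support $\sigma_{+}$ and hence $n$-dimensional maximal cones. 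Covering $\widetilde{\yth}$ by the affine charts $U_\tau$ of these maximal cones, lifting $y$ to some $\tilde y\in U_\tau$, and letting $t\to0$ along a one-parameter subgroup $\mu$ in the interior of $\tau$ yields $\lim_{t\to0}\mu(t)\cdot\tilde y=x_\tau$ (the torus-fixed point of $U_\tau$), whose image $y_0:=\lim_{t\to0}\mu(t)\cdot y$ is a $T$-fixed point of $\yth$. By the previous sentence $y_0\in U(\Gamma)$ for some $\Gamma\in\gr$, and since $U(\Gamma)$ is open and $T$-invariant and contains $\lim_{t\to0}\mu(t)\cdot y$, it also contains $\mu(t_0)\cdot y$ for some $t_0\neq0$, hence $y$. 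Therefore $V=\yth$, which is~(1).

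For~(2), using $V=\yth$ and that normalisation commutes with the open cover, $\widetilde{\yth}$ is glued from the normal affine toric varieties $U^{\nu}(\Gamma)=\Spec\C[\sigma(\Gamma)\dual\cap M]$ over $\Gamma\in\gr$; separatedness of $\widetilde{\yth}$ then forces each overlap $U^{\nu}(\Gamma)\cap U^{\nu}(\Gamma')$ to be the chart of the common face $\sigma(\Gamma)\cap\sigma(\Gamma')$, so the cones $\sigma(\Gamma)$---which are $n$-dimensional precisely because the $\Gamma$ are $G$-bricks---together with their faces constitute the fan of $\widetilde{\yth}$. The step I expect to be the real obstacle is the input used in the third paragraph: that torus-invariant $\theta$-stable $G$-constellations on the birational component correspond bijectively to $\theta$-stable $G$-bricks, compatibly with the charts $U(\Gamma)$ and the cones $\sigma(\Gamma)$ (in particular that two overlapping cones $\sigma(\Gamma)$ meet along a common face). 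This is the genuine content of the $G$-brick theory and rests on the combinatorial analysis of $S(\Gamma)$ and $\sigma(\Gamma)$ in \cite{Terminal}; the surrounding toric and properness arguments are then routine.
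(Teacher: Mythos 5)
First, note that the paper does not prove Theorem~\ref{Thm:Y theta with G-bricks} at all: it is quoted from \cite{Thesis}, so there is no in-paper argument to compare yours against; I can only assess your proposal on its own terms. Your toric bookkeeping is essentially fine: gluing the charts of Proposition~\ref{Prop:open immersion} inside the separated scheme $\yth$, using properness of $\yth\to\C^n/G$ (Theorem~\ref{Thm:CMT}) to see that the fan of the normalisation has support $\sigma_+$ and hence full-dimensional maximal cones, degenerating an arbitrary point to a $T$-fixed point by a one-parameter subgroup in the interior of such a cone, and recovering the fan of $\widetilde{\yth}$ from the cover by the $U^{\nu}(\Gamma)$ via Sumihiro/separatedness --- all of that is standard and correct (modulo small unstated facts you are implicitly using, e.g.\ that the open immersion of Proposition~\ref{Prop:open immersion} is $T$-equivariant, restricting to the identity on the dense torus, so that $U(\Gamma)\subset\yth$ is $T$-invariant and $S(\Gamma)$ generates $M$; these again come from \cite{Thesis,Terminal}, not from anything stated in this paper).

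The genuine gap is the step you yourself flag: the claim that every $T$-fixed point of $\yth$ is the point corresponding to $C(\Gamma)$ for a (necessarily unique) $\theta$-stable $G$-brick $\Gamma$. Nothing in the paper gives this. Definition~\ref{Def:G-prebrick} and Proposition~\ref{Prop:open immersion} only go in one direction (from a $\theta$-stable $G$-brick to a chart of $\yth$); the converse --- that a torus-invariant $\theta$-stable $G$-constellation lying on the birational component admits a monomial basis satisfying the $G$-prebrick axioms, and that the fixed-point condition forces $\sigma(\Gamma)$ to be $n$-dimensional, with the local chart of $\yth$ at that point equal to $\Spec\C[S(\Gamma)]$ --- is precisely the nontrivial content of the theorem being proved. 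Deferring it to ``the combinatorial analysis in \cite{Terminal}'' makes your argument a reduction of the theorem to an external result that is essentially the theorem itself; without that input your proof only shows that $\bigcup_{\Gamma\in\gr}U(\Gamma)$ is an open toric subvariety of $\yth$, not that it exhausts $\yth$. To close the gap one would have to carry out the moduli-theoretic/combinatorial analysis (distinguished monomial bases of torus-invariant constellations on the coherent component, in the spirit of \cite{CMT}), which is exactly what \cite{Thesis} does.
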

\begin{Rem}
For given $G$ and $\theta$, it is difficult to find all $\theta$-stable $G$-bricks in general.\erem
\begin{Def}\label{Def:brickset}
For a finite diagonal group $G\subset\GL_n(\C)$, assume that $Y$ is a normal toric variety admitting a proper birational morphism $Y \to X:=\C^n/G$. Let $\mco$ denote the set of the $n$-dimensional cones in the fan of $Y$. A set $\gr$ of $G$-bricks is called a \emph{$G$-brickset} for $Y$ if $\gr$ satisfies:
\begin{enumerate}
\item there is a bijective map $\mco \to \gr$ sending $\sigma$ to $\Gamma_{\sigma}$;
\item $S(\Gamma_{\sigma})=\sigma\dual \cap M$.
\end{enumerate}
\end{Def}
\begin{Prop}\label{Prop:if we have a brickset enough to show there exists theta}
Suppose that $Y\to X:=\C^n/G$ is a proper birational morphism. Let $\gr$ be a $G$-brickset for $Y$. Then $Y$ is isomorphic to the toric variety $\bigcup_{\Gamma \in \gr} U(\Gamma)$. Moreover, if there exists $\theta\in\Theta$ such that every $\Gamma$ in $\gr$ is $\theta$-stable, then $Y$ is isomorphic to $\yth$.
\end{Prop}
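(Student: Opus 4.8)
The plan is to reduce both assertions to the toric dictionary together with Theorem~\ref{Thm:Y theta with G-bricks}, Proposition~\ref{Prop:open immersion}, and a short properness argument. First I would unwind the definition of a $G$-brickset. Write $\Sigma$ for the fan of the normal toric variety $Y$, so that $\mco$ is its set of maximal cones and $Y=\bigcup_{\sigma\in\mco}U_\sigma$ with $U_\sigma=\Spec\C[\sigma\dual\cap M]$. The condition $S(\Gamma_\sigma)=\sigma\dual\cap M$ says precisely that $U(\Gamma_\sigma)=\Spec\C[S(\Gamma_\sigma)]$ is canonically the chart $U_\sigma$, the identification being compatible with the common dense torus $T=\Spec\C[M]$. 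Since $Y$ is recovered by gluing the $U_\sigma$ along $T$ according to the face relations of $\Sigma$, the variety obtained by gluing the charts $U(\Gamma)$, $\Gamma\in\gr$, along $T$ is isomorphic to $Y$ over $X$; this proves the first claim. Equivalently, one reads off from the bijection $\mco\to\gr$ that the fan assembled from the cones $\sigma(\Gamma_\sigma)$ and their faces is exactly $\Sigma$.

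For the second claim, take a generic $\theta$ as in the hypothesis, and let $\gr'\supseteq\gr$ be the set of \emph{all} $\theta$-stable $G$-bricks. By Theorem~\ref{Thm:Y theta with G-bricks}(1), $\yth=\bigcup_{\Gamma\in\gr'}U(\Gamma)$; in particular, by Proposition~\ref{Prop:open immersion} each $U(\Gamma)$ with $\Gamma\in\gr$ is an open subscheme of $\yth$, and these open immersions agree on overlaps because they all restrict to the canonical torus embedding $T\hookrightarrow\yth$ of Theorem~\ref{Thm:CMT}. Hence they glue to an open immersion $\iota\colon\bigcup_{\Gamma\in\gr}U(\Gamma)\hookrightarrow\yth$ over $X$, whose source is isomorphic to $Y$ by the first claim. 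Now $Y\to X$ is proper (a proper birational morphism by hypothesis) and $\yth\to X$ is projective, hence separated, by Theorem~\ref{Thm:CMT}(2); therefore $\iota\colon Y\to\yth$ is itself proper, so its image is closed in $\yth$. This image is also open and contains $T$, hence it is a nonempty open-and-closed subset of the irreducible variety $\yth$, and thus all of $\yth$. An open immersion onto its image is an isomorphism onto that open subscheme, so $Y\cong\yth$.

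The toric bookkeeping in the first paragraph is routine. The step that needs the most care is the gluing of the individual open immersions $U(\Gamma)\hookrightarrow\yth$ into a single morphism (using that they share the dense torus), and then the properness-plus-irreducibility input from Theorem~\ref{Thm:CMT} that promotes the resulting open immersion to an isomorphism; this last point is also where the hypothesis that $Y\to X$ is \emph{proper}, not merely birational, is genuinely used.
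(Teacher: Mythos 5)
Your proof is correct and follows essentially the same route as the paper: the first claim is the toric identification $U(\Gamma_\sigma)=U_\sigma$ forced by $S(\Gamma_\sigma)=\sigma\dual\cap M$, and the second uses Proposition~\ref{Prop:open immersion} to get an open immersion $\iota\colon Y\hookrightarrow\yth$, then properness of $Y\to X$ against projectivity (separatedness) of $\yth\to X$ plus irreducibility of $\yth$ to upgrade $\iota$ to an isomorphism. You merely spell out details the paper leaves implicit (gluing the local immersions along the dense torus and the open-and-closed argument), so there is no substantive difference.
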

\begin{proof}
By definition, it is clear that $Y$ is isomorphic to the toric variety $\bigcup_{\Gamma \in \gr} U(\Gamma)$. Assume that there exists $\theta\in\Theta$ such that every $\Gamma$ in $\gr$ is $\theta$-stable. From Proposition~\ref{Prop:open immersion}, we can conclude that there exists an open immersion $\iota\colon Y\hookrightarrow \yth$. Furthermore, since $Y\to X$ is proper and $\yth \to X$ is projective, $\iota$ is a closed embedding between $n$-dimensional toric varieties. Thus $\iota$ is an isomorphism.
\end{proof}
\section{Star subdivisions and moduli descriptions}\label{Sec:Star subdivisions and moduli descriptions}
\subsection{Star subdivisions and round down functions}\label{Sec:star subdivisions and round down functions}
Fix a primitive lattice point $v=\tfrac{1}{r}(a_1,\ldots,a_n) \in L \cap \sigma_+$.
The {\em star subdivision} (or {\em barycentric subdivision}) $\Sigma$ of $\sigma_+$ at $v$ is the minimal fan containing all cones $\Cone(\tau,v)$ where $\tau$ varies over all faces of $\sigma_+$ with $v \not \in \tau$.
Let $X:=U_{\sigma_+}$ be the affine toric variety corresponding to $\sigma_+$ and $X_{\Sigma}$ the toric variety corresponding to the fan $\Sigma$. Then the star subdivision induces a projective toric morphism $\nu \colon X_{\Sigma} \rightarrow X=\C^n/G$ with the ramification formula
\begin{equation}\label{Eqtn:ramification formula for star subdivision}
r K_{X_{\Sigma}}-\nu\pull r K_X \equiv_{\mathrm{num}} (\sum_i a_i-r) E_v,
\end{equation}
where $E_v$ is the torus invariant prime divisor corresponding to the 1-dimensional cone $\Cone(v)$.

The fan $\Sigma$ consists of the $n$-dimensional cone $\sigma_k$ and its faces for $k=1,\ldots,n$:
\[
\sigma_k:=\Cone(e_1,\ldots,\hat{e_k},v,\ldots,e_n).
\]
\begin{figure}[h]
\begin{center}
\begin{tikzpicture}
\coordinate [label=left:$e_3$] (e3) at (0,0);
\coordinate [label=right:$e_2$] (e2) at (7,0);
\coordinate [label=right:$e_1$] (e1) at (3.5,6.06);
\coordinate [label=left:$\bf 0$] (0) at (-1,2);
\coordinate (v1) at (2.5,1.7);

\draw[fill] (v1) circle [radius=0.05];
\draw[fill] (e3) circle [radius=0.05];
\draw[fill] (e2) circle [radius=0.05];
\draw[fill] (e1) circle [radius=0.05];
\draw[fill] (0) circle [radius=0.05];
\node [above right] at (v1) {$v=\frac{1}{r}(a_1,a_2,a_3)$};
\draw (e3) -- (e2);
\draw (e3) -- (e1);
\draw (e2) -- (e1);
\draw (e3) -- (v1);
\draw (e2) -- (v1);
\draw (v1) -- (e1);
\foreach \x in {1,2,3}
    \draw (0) -- (e\x); 
\draw (0) -- (v1);
\node [below] at (2.5,1.1) {$\sigma_{1}$};
\node  at (2.2,3) {$\sigma_{2}$};
\node  at (4,3) {$\sigma_{3}$};
\end{tikzpicture}
\end{center}
\caption{Star subdivision $\Sigma$ of $\sigma_+$ at $v$}\label{Fig:Star subdivision of sigma_+ at v}
\end{figure}
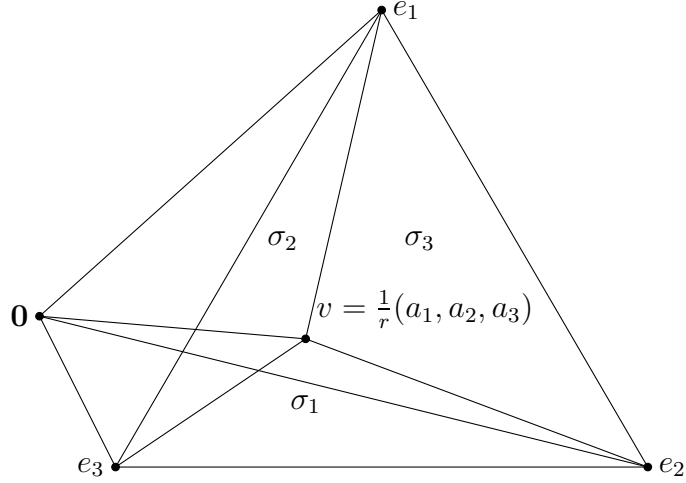
Assume that $v$ generates $L/\Z^n$. Fix $k\in\{1,\ldots,n\}$. Let $L_{k}$ be the sublattice of $L$ generated by $e_1,\ldots,\hat{e_k},v,\ldots,e_n$. Let us consider the dual lattice $M_{k} := \Homz(L_{k},\Z)$ with the corresponding dual basis~$\{\xi_1,\ldots, \xi_n\}$
\[
\xi_j=
\begin{cases}
x_jx_k^{-\frac{a_i}{a_k}} &\text{if $j\neq k$,}\\[6pt]
x_k^{\frac{r}{a_k}} &\text{if $j= k$.}\\
\end{cases}
\]
Note that $M_k$ contains the lattice $M$ and that the lattice inclusion $L_{k} \hookrightarrow L$ induces a toric morphism 
\begin{equation*}
\varphi \colon \Spec \C[\sigma_{k}\dual \cap M_{k}] \rightarrow U_{k}:=\Spec \C[\sigma_{k}\dual \cap M].
\end{equation*}
With eigencoordinates $\{\xi_1,\ldots, \xi_n\}$, the toric affine variety $U_{k}$ has a quotient singularity of type 
\[
\frac{1}{a_k}(a_1,\ldots,\underbrace{-r}_{\mbox{$k$th}},\ldots,a_n).
\]

\begin{Eg} \label{Eg:Star subdivision of 1/20(1,3,4)} Consider the group $G$ of type $\frac{1}{20}(1,3,4)$. Figure~\ref{Fig:Star subdivision for the type 1/20(1,3,4)} shows the star subdivision at $v=\frac{1}{20}(1,3,4)$.

\begin{figure}[h]
\begin{center}
\begin{tikzpicture}
\coordinate [label=left:$e_3$] (e3) at (0,0);
\coordinate [label=right:$e_2$] (e2) at (8,0);
\coordinate [label=right:$e_1$] (e1) at (4,6.8);
\coordinate (v1) at (4,1);
\draw[fill] (v1) circle [radius=0.05];
\draw[fill] (e3) circle [radius=0.05];
\draw[fill] (e2) circle [radius=0.05];
\draw[fill] (e1) circle [radius=0.05];
\node [above right] at (v1) {$v=\frac{1}{20}(1,3,4)$};
\draw (e3) -- (e2);
\draw (e3) -- (e1);
\draw (e2) -- (e1);
\draw (e3) -- (v1);
\draw (e2) -- (v1);
\draw (v1) -- (e1);
\node [below] at (4.2,0.6) {$\sigma_{1}$};
\node  at (2.8,3) {$\sigma_{2}$};
\node  at (5.3,3) {$\sigma_{3}$};
\coordinate [label=left:$v_7$] (v7) at (2.5,1.9);
\draw[fill] (v7) circle [radius=0.05];
\coordinate [label=right:$v_5$] (v5) at (6.92,1.87);
\draw[fill] (v5) circle [radius=0.05];
\coordinate [label=right:$v_{10}$] (v10) at (6,3.4);
\draw[fill] (v10) circle [radius=0.05];
\coordinate [label=right:$v_{15}$] (v15) at (5,5.1);
\draw[fill] (v15) circle [radius=0.05];
\end{tikzpicture}
\end{center}
\caption{Star subdivision at $v$ for the type $\frac{1}{20}(1,3,4)$}\label{Fig:Star subdivision for the type 1/20(1,3,4)}
\end{figure}
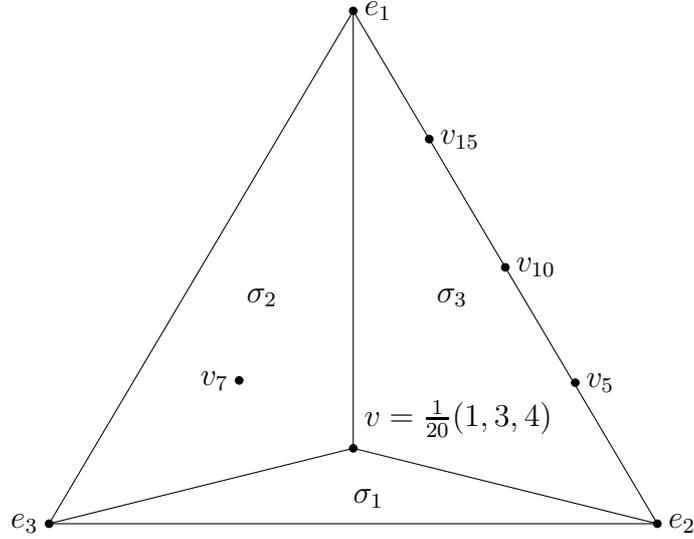

The cone $\sigma_2$ corresponds to the quotient singularity of type $\frac{1}{3}(1,1,1)$ with eigencoordinates $x y^{-\frac{1}{3}}, y^{\frac{20}{3}}, y^{-\frac{4}{3}}z$. There exists a unique lattice point $v_7=\frac{1}{20}(7,1,8)$ on the plane containing $e_1,v,e_3$. On the other hand, the cone $\sigma_3$ on the right side of $v$ has a singularity of type $\frac{1}{4}(1,3,0)$. Note that there are three other lattice points $v_5,v_{10},v_{15}$ on the plane containing $e_1,e_2,v$. Lastly, the affine toric variety corresponding to the cone $\sigma_1=\Cone(e_2,e_3,v)$ is smooth as $v,e_2,e_3$ form a $\Z$-basis of $L$.\eeg

\begin{Def}[Round down functions\footnote{As is stated in~\cite{Thesis,Terminal}, Davis, Logvinenko, and Reid\cite{DLR} introduced a similar construction in a more general setting.}]
With the notation above, for $k\in\{1,\ldots,n\}$, the {\em $k$-th round down function} $\phi_{k} \colon \lau \to  M_{k}$ of the star subdivision at $\frac{1}{r}(a_1,\ldots,a_n)$ is defined by
\[
\phi_{k}(x_1^{m_1}\cdots x_n^{m_n})=\xi_1^{m_1}\cdots\xi_k^{\lf \frac{1}{r}\sum a_im_i \rf}\ldots\xi_n^{m_n}.
\]
where $\lfloor \ \rfloor $ is the floor function. 
\end{Def}
Observe that since $v\in L$, $\frac{1}{r}\sum a_im_i$ is an integer if and only if the monomial $x_1^{m_1}\cdots x_n^{m_n}$ is $G$-invariant.

For the star subdivision at $\frac{1}{r}(a_1,\ldots,a_n)$, let $G_k$ denote the abelian group $L/L_k$ for $k\in\{1,\ldots,n\}$.
\begin{Lem}\label{Lem:phi(m+n)=phi(m)+(n)/phi_k induces a surjective map G dual -> G_k dual}
For each $k$, let $\phi_k$ be the round down function of the star subdivision at $\frac{1}{r}(a_1,\ldots,a_n)$ and $G_k$ the abelian group $L/L_k$. For a $G$-invariant monomial $\bn \in M$ and a monomial $\bm \in \lau$, 
\begin{equation*}
\phi_{k}(\bm \cdot \bn)= \phi_{k}(\bm)\cdot\bn.
\end{equation*}
In particular, the weights of $\phi_{k}(\bm \cdot \bn)$ and $\phi_{k}(\bm)$ are the same with respect to the $G_k$-action.
Thus $\phi_k$ induces a well-defined surjective map
\[
\phi_k \colon G\dual \rightarrow G_k\dual,\quad \rho \mapsto \phi_k(\rho),
\]
where $\phi_k(\rho)$ is the weight of $\phi_k(\bm)$ for a monomial $\bm \in \lau$ of weight~$\rho$. 
\end{Lem}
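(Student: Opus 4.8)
The plan is to push everything into the $\xi$-coordinates, where $\phi_k$ is simply ``round the exponent of $\xi_k$ down to the nearest integer.'' First I would rewrite an arbitrary Laurent monomial $\bm=x_1^{m_1}\cdots x_n^{m_n}\in\lau$ in the basis $\xi_1,\dots,\xi_n$ of $M_k$: inverting the defining relations $\xi_j=x_jx_k^{-a_j/a_k}$ $(j\ne k)$ and $\xi_k=x_k^{r/a_k}$ gives $x_j=\xi_j\xi_k^{a_j/r}$ and $x_k=\xi_k^{a_k/r}$, whence, as a rational Laurent monomial,
\[
\bm=\Bigl(\textstyle\prod_{j\ne k}\xi_j^{m_j}\Bigr)\cdot\xi_k^{\frac1r\sum_i a_im_i}.
\]
Comparing this with the definition of $\phi_k$ shows that $\phi_k$ fixes the exponents of $\xi_j$ for $j\ne k$ and replaces the (rational) $\xi_k$-exponent $\frac1r\sum a_im_i$ by $\lf\frac1r\sum a_im_i\rf$. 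In particular $\phi_k(\bm)=\bm$ iff $\frac1r\sum a_im_i\in\Z$, which, since $v\in L$, holds iff $\bm$ is $G$-invariant; so $\lau\cap M_k=M$ inside the group of rational Laurent monomials, consistent with $M\subseteq M_k$.

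For the displayed identity of the lemma, take $\bn=x_1^{n_1}\cdots x_n^{n_n}\in M$, so $c:=\frac1r\sum a_in_i\in\Z$. Then the $\xi_j$-exponent of $\bm\bn$ is $m_j+n_j$ for $j\ne k$, and its pre-floor $\xi_k$-exponent is $\frac1r\sum a_im_i+c$. Since $\lf t+c\rf=\lf t\rf+c$ for $c\in\Z$, applying $\phi_k$ to $\bm\bn$ gives $\bigl(\prod_{j\ne k}\xi_j^{m_j+n_j}\bigr)\xi_k^{\lf\frac1r\sum a_im_i\rf+c}=\phi_k(\bm)\cdot\bn$, using $\bn=\bigl(\prod_{j\ne k}\xi_j^{n_j}\bigr)\xi_k^{c}\in M_k$ from the first step. (Note $\phi_k$ is not a homomorphism — the floor is not additive — so the content of the lemma is exactly this $M$-equivariance.)

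For the ``in particular'' and the induced map: viewed in $M_k$, the element $\bn\in M$ lies in the kernel of the weight homomorphism $\wt_k\colon M_k\to G_k\dual$ of the quotient presentation $U_k=\C^n/G_k$, $G_k=L/L_k$ — indeed $\ker\wt_k$ consists of those $\xi\in M_k=\Homz(L_k,\Z)$ that are $\Z$-valued on all of $L$, i.e.\ of $M$. Hence $\wt_k(\phi_k(\bm\bn))=\wt_k(\phi_k(\bm)\cdot\bn)=\wt_k(\phi_k(\bm))$, giving the ``in particular.'' Consequently, if $\bm,\bm'\in\lau$ have the same $G$-weight, then $\bm'=\bm\bn$ for some $\bn\in M$, so $\phi_k(\bm')$ and $\phi_k(\bm)$ have the same $G_k$-weight; thus $\rho\mapsto\wt_k(\phi_k(\bm))$ (for any $\bm$ of weight $\rho$) is a well-defined map $G\dual\to G_k\dual$. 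It is surjective because $\wt_k$ is surjective and $\phi_k\colon\lau\to M_k$ is itself onto: given $\xi_1^{c_1}\cdots\xi_n^{c_n}\in M_k$, keep $m_j=c_j$ for $j\ne k$ and choose $m_k\in\Z$ with $\frac1r\bigl(\sum_{j\ne k}a_jc_j+a_km_k\bigr)\in[c_k,c_k+1)$, which is possible since $\{\frac{a_k}{r}m_k:m_k\in\Z\}$ is an arithmetic progression of step $\gcd(a_k,r)/r\le1$ and so meets every half-open unit interval.

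I expect essentially no serious obstacle here: the only delicate points are the bookkeeping between the $x$- and $\xi$-lattices (which do not contain one another) and the elementary identity $\lf t+c\rf=\lf t\rf+c$ for integral $c$, with the arithmetic-progression observation being the sole non-formal ingredient in the surjectivity claim.
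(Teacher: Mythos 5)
The paper states this lemma without proof, and your main computation is exactly the routine argument it suppresses: writing $\bm$ in the $\xi$-coordinates so that $\phi_k$ becomes ``floor the $\xi_k$-exponent'', then using $\lf t+c\rf=\lf t\rf +c$ for $c\in\Z$ together with the identification $\ker\big(\wt_k\colon M_k\to G_k\dual\big)=M$. That part, including the well-definedness of $\rho\mapsto\phi_k(\rho)$, is correct and complete.

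The one genuine flaw is in your surjectivity argument. The set $\{\tfrac{a_k}{r}m_k : m_k\in\Z\}$ is an arithmetic progression of step $a_k/r$, not $\gcd(a_k,r)/r$; you cannot shift by arbitrary integers here, because the exponents $m_j=c_j$ $(j\neq k)$ are forced and the window $[c_k,c_k+1)$ is a fixed unit interval, so what you need is a multiple of $a_k$ in a prescribed half-open window of length $r$. This exists precisely when $a_k\le r$. As written, your gcd justification would ``prove'' surjectivity unconditionally, which cannot be right: $|G_k\dual|=[L:L_k]=a_k$ while $|G\dual|=r$, so no surjection $G\dual\to G_k\dual$ can exist if $a_k>r$. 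The fix is simply to invoke $a_k\le r$, which holds throughout the paper's setting (the points $v$ used lie in the simplex $\{\bu\in\sigma_+ : \langle\bu,x_1\cdots x_n\rangle\le 1\}$, and the good-subdivision condition $a_i+a_j\le r$ also gives it), and then the step $a_k/r\le 1$ does meet every half-open unit interval. Alternatively, surjectivity follows at once from the explicit description in Remark~\ref{Rem:description of G dual -> G_k dual}: the induced map sends $\rho_i$ to the character of weight $i\bmod a_k$, which hits all of $G_k\dual$ as soon as $a_k\le r$.
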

\begin{Rem}\label{Rem:description of G dual -> G_k dual}
In the lemma above, $\phi_k \colon G\dual \rightarrow G_k\dual$ can be described as follows. Let $\rho_i$ be the irreducible representation of $G$ whose weight is $i$. Then the weight of the representation $\phi_k(\rho_i)$ is $j$ where $j$ is the residue of $i$ modulo $a_k$, i.e. $j= i\mod a_k$.
\erem
\begin{Lem}\label{Lem:localizations,lacing}
Let $\bm \in \lau$ be a monomial of weight~$j$.
The  weight $j$ satisfies $0\leq j<r-a_k$ if and only if
\[
\phi_{k}(x_k\cdot \bm)=\phi_{k}(\bm).
\]
\end{Lem}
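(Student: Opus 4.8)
The statement is a short bookkeeping computation with the floor function, so I will only indicate the steps. Write $\bm=x_1^{m_1}\cdots x_n^{m_n}$ with $m_i\in\Z$ and set $s:=\sum_{i=1}^n a_im_i\in\Z$. Since $v=\tfrac1r(a_1,\ldots,a_n)$ generates $L/\Z^n\cong G$, the weight of $\bm$ is the class of $s$ modulo $r$; writing $s=qr+j$ with $q:=\lfloor s/r\rfloor$ and $0\le j<r$ (so that $j$ is the weight taken as the standard representative), the hypothesis ``$\bm$ has weight $j$ with $0\le j<r-a_k$'' simply says $j<r-a_k$.

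First I would compare $\phi_k(\bm)$ and $\phi_k(x_k\cdot\bm)$ entry by entry in the basis $\{\xi_1,\ldots,\xi_n\}$ of $M_k$. Passing from $\bm$ to $x_k\cdot\bm$ changes only the $x_k$-exponent, from $m_k$ to $m_k+1$; by the definition of $\phi_k$ the $\xi_j$-exponent for $j\ne k$ equals the $x_j$-exponent and is therefore unchanged, while the $\xi_k$-exponent is $\lfloor\tfrac1r\sum_i a_im_i\rfloor=\lfloor s/r\rfloor$ for $\bm$ and $\lfloor\tfrac1r(\sum_i a_im_i+a_k)\rfloor=\lfloor (s+a_k)/r\rfloor$ for $x_k\cdot\bm$. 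Hence $\phi_k(x_k\cdot\bm)=\phi_k(\bm)$ if and only if $\lfloor (s+a_k)/r\rfloor=\lfloor s/r\rfloor=q$.

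Then I would evaluate $\lfloor (s+a_k)/r\rfloor=q+\lfloor (j+a_k)/r\rfloor$ using $s=qr+j$, so the equality of round-downs holds if and only if $\lfloor (j+a_k)/r\rfloor=0$. Because $v$ lies in the interior of $\sigma_+$ we have $a_k>0$, and $j\ge 0$ by our choice of representative, so $j+a_k\ge 0$; thus $\lfloor (j+a_k)/r\rfloor=0$ is equivalent to $j+a_k<r$, i.e.\ to $0\le j<r-a_k$. Chaining the two ``if and only if'' statements gives the lemma, both implications coming out at once. The only point that needs care --- and the closest thing to an obstacle --- is pinning down the convention that the weight $j$ is the representative in $\{0,1,\ldots,r-1\}$, since that is exactly what makes the inequality $0\le j$ automatic and keeps the two floor computations consistent; everything else is routine.
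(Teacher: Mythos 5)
Your proof is correct and takes essentially the same route as the paper: the paper's one-line argument rewrites the hypothesis $0\le j<r-a_k$ as the statement that the fractional part of $\tfrac{1}{r}\sum a_im_i$ is less than $\tfrac{r-a_k}{r}$, which is exactly your computation that $\lfloor (s+a_k)/r\rfloor=\lfloor s/r\rfloor$ if and only if $j+a_k<r$. The points you spell out --- that only the $\xi_k$-exponent can change and that the weight $j$ is taken as the representative in $\{0,\ldots,r-1\}$ --- are left implicit in the paper but are the same argument.
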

\begin{proof}
Assume that $\bm=x_1^{m_1}\cdots x_n^{m_n}$ is a monomial of weight $j$ with $0\leq j<r-a_k$, i.e.\ 
\[
0 \leq \frac{1}{r}\sum a_im_i - \lf \frac{1}{r}\sum a_im_i \rf <\frac{r-a_k}{r}.
\]
This is equivalent to the condition that $\phi_{k}(x_k\cdot \bm)=\phi_{k}(\bm)$.
\end{proof}
\begin{Lem}\label{Lem:bm=bn dot bm'}
If $\phi_k(\bm)=\phi_k(\bm')$ for some $k$, then $\bm=\bn\cdot \bm'$ or $\bm'=\bn\cdot \bm$ for some $\bn\in\mon$.
\end{Lem}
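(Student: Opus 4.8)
The plan is to compare the two sides of $\phi_k(\bm)=\phi_k(\bm')$ coordinate-by-coordinate in the lattice $M_k$. Recall that $\{\xi_1,\dots,\xi_n\}$ is the dual basis of $M_k$, hence in particular a $\Z$-basis, so every element of $M_k$ is written \emph{uniquely} as a product of powers of the $\xi_j$. Writing $\bm=x_1^{m_1}\cdots x_n^{m_n}$ and $\bm'=x_1^{m'_1}\cdots x_n^{m'_n}$ as Laurent monomials, the definition of the round down function gives
\[
\phi_k(x_1^{m_1}\cdots x_n^{m_n})=\xi_1^{m_1}\cdots\xi_k^{\lf\frac1r\sum_i a_i m_i\rf}\cdots\xi_n^{m_n},
\]
so the $\xi_j$-exponent of $\phi_k(\bm)$ equals $m_j$ for every $j\neq k$ (and equals $\lf\tfrac1r\sum_i a_im_i\rf$ for $j=k$).

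Granting this, the equality $\phi_k(\bm)=\phi_k(\bm')$ forces $m_j=m'_j$ for all $j\neq k$: the only coordinate in which $\bm$ and $\bm'$ may differ is the $k$-th one. Consequently $\bm/\bm'=x_k^{\,m_k-m'_k}$ is a (possibly negative) power of $x_k$ alone. Set $\bn:=x_k^{\,|m_k-m'_k|}$, which lies in $\mon$ since its exponent is $\geq 0$. If $m_k\geq m'_k$ then $\bm=\bn\cdot\bm'$, and if $m_k< m'_k$ then $\bm'=\bn\cdot\bm$; either way this is precisely the assertion of the lemma. Note that the floor-constraint in the $k$-th coordinate, $\lf\tfrac1r\sum_i a_im_i\rf=\lf\tfrac1r\sum_i a_im'_i\rf$, is then automatically consistent (it holds because $\sum_i a_i m_i-\sum_i a_i m'_i=a_k(m_k-m'_k)$, but we do not even need to invoke it).

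I do not expect a real obstacle here; the one point to be careful about is the opening remark, namely that the $\xi_j$ with $j\neq k$ really furnish independent lattice coordinates on $M_k$ (the fact that $\xi_j=x_jx_k^{-a_j/a_k}$ involves $x_k$ is irrelevant, since $\{\xi_1,\dots,\xi_n\}$ is a basis of $M_k$), so that equality of $\phi_k$-images is equivalent to equality of all $\xi_j$-exponents. Once this bookkeeping is in place the statement is immediate; alternatively one could deduce it from Lemma~\ref{Lem:localizations,lacing} by multiplying the monomial with the smaller $k$-th exponent by $x_k$ step by step, but the direct coordinate comparison is cleaner.
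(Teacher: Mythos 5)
Your proposal is correct and follows essentially the same route as the paper: the paper also reads off from the definition of $\phi_k$ that $m_i=m'_i$ for all $i\neq k$ (you merely spell out the bookkeeping that $\{\xi_1,\dots,\xi_n\}$ is a $\Z$-basis of $M_k$) and then takes $\bn=x_k^{m_k-m'_k}$ after assuming $m_k\geq m'_k$. No gap; the extra remark about the floor exponent being automatically consistent is fine but, as you note, unnecessary.
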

\begin{proof}
Let us suppose that $\phi_k(\bm)=\phi_k(\bm')$ for $\bm=x_1^{m_1}\cdots x_n^{m_n}$ and $\bm'=x_1^{m'_1}\cdots x_n^{m'_n}$ with $m_k\geq m'_k$. From the definition of the round down function $\phi_k$, we have $m_i=m'_i$ for all $i\neq k$. Thus $\bm=\bn\cdot \bm'$ with $\bn=x_k^{m_k-m'_k} \in \mon$.
\end{proof}
\begin{Def}
The star subdivision of $\sigma_+$ at $v=\frac{1}{r}(a_1,\ldots,a_n)$ is said to be {\em good} if:
\begin{enumerate}
\item $v$ generates $L/\Z^n$; 
\item for every $i\neq j$, $a_i+a_j \leq r$.
\end{enumerate}
\end{Def}
\begin{Lem}\label{Lem:Connected Gamma}
Let $\phi_k$ be the $k$-th round down function of the good star subdivision at $\frac{1}{r}(a_1,\ldots,a_n)$.
For any monomial $\bk$ in the lattice $M_k$ and any degree one monomial $\xi_j$ in $M_k$, there exist $x_i$ and a monomial $\bm \in \lau$ such that
\[
\phi_k(x_i \cdot \bm)=\xi_j\cdot\bk \quad \text{with} \quad \phi_k(\bm)= \bk.
\] 
\end{Lem}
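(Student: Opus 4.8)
The plan is to reduce the statement to an elementary fact about integers: any interval of $m$ consecutive integers meets every residue class modulo $a_k$ as soon as $m\geq a_k$. First I would fix coordinates. Write $\bk=\xi_1^{c_1}\cdots\xi_n^{c_n}\in M_k$; directly from the definition of $\phi_k$, a Laurent monomial $\bm=x_1^{m_1}\cdots x_n^{m_n}\in\lau$ satisfies $\phi_k(\bm)=\bk$ if and only if $m_\ell=c_\ell$ for every $\ell\neq k$ and $\lf\tfrac{1}{r}\sum_i a_im_i\rf=c_k$. So the only freedom in building $\bm$ is the exponent $m_k$; writing $s:=\sum_{\ell\neq k}a_\ell c_\ell$ and $N:=\sum_i a_im_i=a_km_k+s$, the condition $\phi_k(\bm)=\bk$ says precisely that $N\in[rc_k,\,r(c_k+1))$. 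As $m_k$ runs over $\Z$ the integer $N$ runs over a fixed residue class modulo $a_k$, and I would record that $1\leq a_k\leq r$, the bound $a_k\leq r$ coming from goodness since $a_k\leq a_k+a_\ell\leq r$ for any $\ell\neq k$ (using $a_\ell\geq 0$).

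Next I would take $i=j$ in every case and read off the effect on $\phi_k$ of multiplying by $x_j$. If $j\neq k$, then $\phi_k(x_j\bm)$ has the same $\xi_\ell$-exponent as $\bk$ for $\ell\neq j,k$, has $\xi_j$-exponent $c_j+1$, and has $\xi_k$-exponent $\lf(N+a_j)/r\rf$; hence $\phi_k(x_j\bm)=\xi_j\bk$ is equivalent to the additional condition $N+a_j<r(c_k+1)$, i.e.\ $N\in[rc_k,\,r(c_k+1)-a_j)$, a window containing $r-a_j$ integers. The goodness inequality $a_j+a_k\leq r$ gives $r-a_j\geq a_k$, so this window contains an integer of the form $a_km_k+s$, which supplies the required $m_k$. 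If $j=k$ (so $i=j=k$), then $\phi_k(x_k\bm)$ agrees with $\bk$ outside $\xi_k$ and has $\xi_k$-exponent $\lf(N+a_k)/r\rf$; hence $\phi_k(x_k\bm)=\xi_k\bk$ is equivalent to $N+a_k\geq r(c_k+1)$, i.e.\ $N\in[r(c_k+1)-a_k,\,r(c_k+1))$, a window of exactly $a_k$ integers which lies inside $[rc_k,\,r(c_k+1))$ because $a_k\leq r$; it again contains a suitable $N$, hence a suitable $m_k$. In either case, setting $\bm:=x_1^{c_1}\cdots x_k^{m_k}\cdots x_n^{c_n}$ finishes the argument.

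The only genuine obstacle is the case $j\neq k$: one must raise the $\xi_j$-coordinate by multiplying by $x_j$ without disturbing the floor term that produces the $\xi_k$-coordinate, and the set of admissible $N$ for which both $\phi_k(\bm)=\bk$ and $\phi_k(x_j\bm)=\xi_j\bk$ hold is an interval of $r-a_j$ integers. Recognising that hypothesis~(ii) in the definition of a good star subdivision, $a_i+a_j\leq r$, is exactly the inequality keeping this interval long enough to hit the residue class of $s$ modulo $a_k$ is the crux; everything else is routine bookkeeping with the floor function.
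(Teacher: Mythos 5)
Your proof is correct and is in substance the same as the paper's: both exploit that the only freedom is the $x_k$-exponent (so the relevant quantity moves in steps of $a_k$) and that the goodness inequality $a_j+a_k\le r$ makes the admissible range long enough, the paper phrasing this as adjusting $\bm$ by powers of $x_k$ until the fractional part of $\tfrac1r\sum a_im_i$ lies in the right window, and you phrasing it as an interval of length at least $a_k$ meeting the residue class of $s$ modulo $a_k$. No gap; your write-up is just a more explicit, quantitative version of the same argument.
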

\begin{proof}
Fix $k$. Suppose that $\bk$ is a monomial in $M_k$ and that $\xi_j$ is a degree one monomial in $M_k$.

First consider the case where $j=k$, i.e. $\xi_j=\xi_k$. As the round down function $\phi_k$ is surjective, there exists $\bm=x_1^{m_1}\cdots x_n^{m_n} \in \lau$ such that $\phi_k(\bm)=\bk$. By Lemma~\ref{Lem:localizations,lacing}, after multiplying $x_k$ enough, we may assume that $\phi_k(x_k\cdot\bm)\neq\phi_k(\bm)$. This means that
\[
\frac{1}{r}\sum_i a_im_i + \frac{a_k}{r} \geq \lf \frac{1}{r}\sum_i a_im_i \rf +1.
\]
Thus we have $\phi_k(x_k\cdot\bm)=\xi_k\cdot \bk$.

For the case where $j\neq k$, consider $\bm=x_1^{m_1}\cdots x_n^{m_n} \in \lau$ such that $\phi_k(\bm)=\bk$ with $\phi_k(x_k\inv\cdot\bm)\neq\phi_k(\bm)$, i.e.\
\[
\frac{1}{r}\sum_i a_im_i - \frac{a_k}{r} < \lf \frac{1}{r}\sum_i a_im_i \rf.
\]
Since the star subdivision is good, we have $a_k+a_j \leq r$. 
This implies that $\phi_k(x_j\cdot\bm)=\xi_j\cdot \bk$.
\end{proof}
\begin{Prop} \label{Prop:From Gamma' to Gamma:S(Gamma)=S(Gamma')}
Let $\phi_k$ be the $k$-th round down function of the good star subdivision at $\frac{1}{r}(a_1,\ldots,a_n)$. For a $G_k$-brick $\Gamma'$, define
\[
\Gamma:=\left\{\bm \in \lau \st \phi_{k}(\bm) \in \Gamma' \right\}.
\]
\begin{enumerate}
\item The set $\Gamma$ is a $G$-brick with $S(\Gamma)=S(\Gamma')$.
\item For $\bm \in \lau$, we have $\wtgp\big(\phi_k(\bm)\big)=\phi_k\big(\wtga(\bm)\big)$.
\end{enumerate} 
\end{Prop}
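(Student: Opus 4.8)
The plan is to verify the four axioms of a $G$-prebrick for $\Gamma$, then promote $\Gamma$ to a $G$-brick by identifying $S(\Gamma)$ with $S(\Gamma')$, and finally read off part (ii) as a byproduct. First I would check that $\Gamma$ is a $G$-prebrick. Axiom (i) ($\one\in\Gamma$) follows because $\phi_k(\one)=\one\in\Gamma'$. For the weight bijection (ii), I would use Lemma~\ref{Lem:phi(m+n)=phi(m)+(n)/phi_k induces a surjective map G dual -> G_k dual}: given $\rho\in G\dual$, a monomial $\bm$ of weight $\rho$ lies in $\Gamma$ iff $\phi_k(\bm)\in\Gamma'$, and since $\phi_k(\bm)$ determines $\bm$ up to multiplication by a power of $x_k$ (Lemma~\ref{Lem:bm=bn dot bm'}), the fibre of $\wt$ over $\rho$ inside $\Gamma$ corresponds to the unique element $\bm_{\phi_k(\rho)}'\in\Gamma'$ pulled back through the line of $x_k$-multiples. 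This both shows existence/uniqueness of $\bm_\rho\in\Gamma$ and simultaneously proves part (ii): $\wtga(\bm)$ is the unique monomial in $\Gamma$ of weight $\rho$, and its image under $\phi_k$ must be $\wtgp(\phi_k(\bm))$ since $\phi_k$ respects weights. The "module" closure axiom (iii) is essentially immediate: if $\bn'\bn\bm_\rho\in\Gamma$ then $\phi_k(\bn'\bn\bm_\rho)=\bn'\bn\,\phi_k(\bm_\rho)\in\Gamma'$ by Lemma~\ref{Lem:phi(m+n)=phi(m)+(n)/phi_k induces a surjective map G dual -> G_k dual} (the factors $\bn,\bn'$ being $G$-invariant is not needed here — one uses instead that $\Gamma'$ is closed under division as in Lemma~\ref{Lem:combinatorial description of submodule}, or directly the definition of $\phi_k$ on each coordinate), hence $\bn\,\phi_k(\bm_\rho)=\phi_k(\bn\bm_\rho)\in\Gamma'$, so $\bn\bm_\rho\in\Gamma$.

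Connectedness (axiom (iv)) is where Lemma~\ref{Lem:Connected Gamma} does the work. Given $\bm_\rho\in\Gamma$, its image $\phi_k(\bm_\rho)\in\Gamma'$ is connected to $\one$ in $\Gamma'$ by a fractional path of $\xi_j^{\pm1}$-steps. I would lift this path step by step: each elementary step in $\Gamma'$ from some $\bk$ to $\xi_j^{\pm1}\bk$ can be realised, by Lemma~\ref{Lem:Connected Gamma}, as a step in $\Gamma$ multiplying or dividing by a single $x_i$ (note the lemma produces, for a $\xi_j$-step, an $x_i$-step with $\phi_k$ of source and target landing on $\bk$ and $\xi_j\bk$ respectively). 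Concatenating, and appending a path from $\bm_\rho$ to the chosen lift of $\phi_k(\bm_\rho)$ along powers of $x_k$ (which $\phi_k$ collapses, by Lemma~\ref{Lem:localizations,lacing}), gives a fractional path in $\Gamma$ from $\bm_\rho$ to $\one$.

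For part (i), the key identity is $S(\Gamma)=S(\Gamma')$. I would show both inclusions are in fact equalities of generating sets. A generator of $S(\Gamma)$ has the form $\dfrac{\bn\cdot\bm_\rho}{\wtga(\bn\cdot\bm_\rho)}$ with $\bn\in\mon$, $\bm_\rho\in\Gamma$. Applying $\phi_k$ and using part (ii) together with Lemma~\ref{Lem:phi(m+n)=phi(m)+(n)/phi_k induces a surjective map G dual -> G_k dual} (the ratio is $G$-invariant, hence $\phi_k$ acts on it as the identity inclusion $M\hookrightarrow M_k$), this ratio equals $\dfrac{\phi_k(\bn\bm_\rho)}{\wtgp(\phi_k(\bn\bm_\rho))}$ — but $\phi_k(\bn\bm_\rho)$ need not be $\bn$ times an element of $\Gamma'$ on the nose. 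Here I would invoke Lemma~\ref{Lem:Connected Gamma} (or its proof) to rewrite $\phi_k(\bn\bm_\rho)$ as $\bn'\cdot\phi_k(\bm_\rho)$ for a suitable $\bn'\in\mon$ with the same class, so the generator of $S(\Gamma)$ is visibly a generator of $S(\Gamma')$; conversely every generator of $S(\Gamma')$ is hit because $\phi_k$ is surjective. The main obstacle I anticipate is exactly this bookkeeping: controlling how $\phi_k$ interacts with the $x_k$-direction when computing $\wtga$ versus $\wtgp$, i.e. making sure the floor function in the definition of $\phi_k$ does not introduce a spurious $\xi_k$-factor when comparing the two semigroups. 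Once that is handled, the equality $\sigma(\Gamma)=\sigma(\Gamma')$ follows by dualising, $\Gamma'$ being a $G_k$-brick forces $\sigma(\Gamma')$ to be $n$-dimensional, hence $\sigma(\Gamma)$ is too, so $U(\Gamma)$ has a torus fixed point and $\Gamma$ is a $G$-brick.
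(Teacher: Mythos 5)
Your overall strategy is the same as the paper's (pull back along $\phi_k$, check the prebrick axioms, lift connectedness via Lemma~\ref{Lem:Connected Gamma}, and compare generators of the two semigroups), but two steps as written do not hold up. First, the identities $\phi_k(\bn'\bn\bm_\rho)=\bn'\bn\,\phi_k(\bm_\rho)$ and $\phi_k(\bn\bm_\rho)=\bn\,\phi_k(\bm_\rho)$ are false for non-invariant factors: already for $\bn=x_k$ one has $\phi_k(x_k\bm)\in\{\phi_k(\bm),\,\xi_k\phi_k(\bm)\}$, never $x_k\phi_k(\bm)$, and your parenthetical hedge does not supply a correct replacement. The closure axiom (iii) for $\Gamma$ needs instead the factorisation
\[
\phi_{k}(\bn' \bn \bm_{\rho})=
\frac{\phi_{k}(\bn' \bn \bm_{\rho})}{\phi_{k}(\bn \bm_{\rho})}
\cdot
\frac{\phi_{k}(\bn \bm_{\rho})}{\phi_{k}(\bm_{\rho})}
\cdot
\phi_{k}(\bm_{\rho}),
\]
where both ratios have nonnegative exponents because $\phi_k$ is monotone along divisibility (the floor can only increase); then axiom (iii) applied to the $G_k$-brick $\Gamma'$ yields $\phi_k(\bn\bm_\rho)\in\Gamma'$, i.e.\ $\bn\bm_\rho\in\Gamma$. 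Second, the inclusion $S(\Gamma')\subseteq S(\Gamma)$ does not follow from surjectivity of $\phi_k$ alone: a generator of $S(\Gamma)$ must come from a pair consisting of $\bn\in\mon$ and an element of $\Gamma$, and an arbitrary $\phi_k$-preimage of $\bn'\cdot\bk$ need not have that shape. The correct route (the paper's) is to reduce to degree-one generators $\xi_j\bk/\wtgp(\xi_j\bk)$ with $\bk\in\Gamma'$, invoke Lemma~\ref{Lem:Connected Gamma} -- this is exactly where that lemma is needed, rather than in the forward inclusion, where monotonicity of $\phi_k$ suffices -- to obtain $x_i$ and $\bm_\rho\in\Gamma$ with $\phi_k(\bm_\rho)=\bk$ and $\phi_k(x_i\bm_\rho)=\xi_j\bk$, and then use part (ii) to identify this generator with $x_i\bm_\rho/\wtga(x_i\bm_\rho)\in S(\Gamma)$.

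A smaller point: the existence of an element of each weight $\rho$ in $\Gamma$ is asserted via the ``line of $x_k$-multiples'' without justification; it is cleaner (and is the paper's argument) to take any $\bm$ of weight $\rho$ and observe that $\bm\cdot\bigl(\wtgp(\phi_k(\bm))/\phi_k(\bm)\bigr)$ lies in $\Gamma$ and still has weight $\rho$, since the correction factor is $G$-invariant and Lemma~\ref{Lem:phi(m+n)=phi(m)+(n)/phi_k induces a surjective map G dual -> G_k dual} applies; uniqueness then follows from the same lemma combined with Lemma~\ref{Lem:bm=bn dot bm'}. With these repairs your argument coincides with the paper's proof.
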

\begin{proof}
First we show (ii) assuming that $\Gamma$ is a $G$-prebrick. It follows that $\phi_k(\bm)$ is of the same weight as $\phi_k\big(\wtga(\bm)\big)$ from Lemma~\ref{Lem:phi(m+n)=phi(m)+(n)/phi_k induces a surjective map G dual -> G_k dual}. Since $\phi_k\big(\wtga(\bm)\big)\in\Gamma'$, the assertion is proved.

To prove (i), note that $\one \in \Gamma$ as $\phi_{k}(\one)=\one \in \Gamma'$. Second we show that there exists a unique monomial of weight $\rho$ in $\Gamma$ for each $\rho \in G\dual$. Fix $\rho\in G\dual$. Since the star subdivision is good, we have a monomial $\bm\in \lau$ such that the weight of $\bm$ is $\rho$. Note that $\wtgp\big(\phi_k(\bm)\big) \in \Gamma'$ and that $\frac{\wtgp\big(\phi_k(\bm)\big)}{\phi_k(\bm)}$ is in the lattice $M$. From Lemma~\ref{Lem:phi(m+n)=phi(m)+(n)/phi_k induces a surjective map G dual -> G_k dual}, 
\[
\phi_{k}\colon \bm \cdot \Big(\frac{\wtgp\big(\phi_k(\bm)\big)}{\phi_k(\bm)} \Big) \mapsto \wtgp\big(\phi_k(\bm)\big),
\]
so $\bm \cdot \Big(\frac{\wtgp\big(\phi_k(\bm)\big)}{\phi_k(\bm)} \Big)$ is an element of weight $\rho$ in $\Gamma$. From Lemma~\ref{Lem:phi(m+n)=phi(m)+(n)/phi_k induces a surjective map G dual -> G_k dual}, the uniqueness is followed. Lemma~\ref{Lem:Connected Gamma} implies that $\Gamma$ is connected as $\Gamma'$ is connected.

To show (iii) in Definition~\ref{Def:G-prebrick}, suppose that $\bn' \cdot \bn \cdot \bm_{\rho} \in \Gamma$ for $\bm_{\rho}\in \Gamma$ and $\bn, \bn' \in \mon$. Note that 
\[
\phi_{k}(\bn' \cdot \bn \cdot \bm_{\rho})=
\frac{\phi_{k}(\bn' \cdot \bn \cdot \bm_{\rho})}{\phi_{k}(\bn \cdot \bm_{\rho})} 
\cdot 
\frac{\phi_{k}(\bn \cdot \bm_{\rho})}{\phi_{k}(\bm_{\rho})}
\cdot 
{\phi_{k}(\bm_{\rho})}
\in \Gamma'.
\]
Since $\Gamma'$ is a $G_k$-brick, $\phi_{k}(\bn \cdot \bm_{\rho})\in\Gamma'$. Thus $\bn \cdot \bm_{\rho}$ is in $\Gamma$. Therefore $\Gamma$ is a $G$-prebrick. 

To show that $S(\Gamma)=S(\Gamma')$, note that for $\bn \in \mon$ and $\bm_{\rho} \in \Gamma$,
\[
\frac{\bn \cdot \bm_{\rho}}{\wtga(\bn \cdot \bm_{\rho})} 
= \frac{\phi_{k}(\bn \cdot \bm_{\rho})}{\phi_{k}\big(\wtga(\bn \cdot \bm_{\rho})\big)}
= \frac{\bk \cdot \phi_{k}(\bm_{\rho})}{\wtgp\big(\bk \cdot \phi_{k}(\bm_{\rho}) \big)} \in S(\Gamma')
\]
where $\bk=\frac{\phi_{k}(\bn\cdot \bm_{\rho})}{\phi_{k}(\bm_{\rho})}$. Since $S(\Gamma)$ is generated by $\frac{\bn \cdot \bm_{\rho}}{\wtga(\bn \cdot \bm_{\rho})}$, we proved that 
$S(\Gamma) \subset S(\Gamma')$.

For the opposite inclusion, suppose that $\bk\in\Gamma'$. Let $\{\xi_j\}$ be the eigencoordinates with respect to the $G_k$-action.
Lemma~\ref{Lem:Connected Gamma} shows that for every $\xi_j$ there exist $x_i$, $\bm_{\rho} \in \Gamma$ such that $\phi_{k}(x_i \cdot \bm_{\rho})=\xi_j\cdot\bk$ with $\phi_{k}(\bm_{\rho})=\bk$. Then
\[
\frac{\xi_j \cdot \bk}{\wtgp(\xi_j \cdot \bk)} 
=\frac{\phi_{k}(x_i \cdot \bm_{\rho})}{\wtgp\big(\phi_{k}(x_i \cdot \bm_{\rho})\big)}
= \frac{\phi_{k}(x_i \cdot \bm_{\rho})}{\phi_{k}\big(\wtga(x_i \cdot \bm_{\rho})\big)}
=\frac{x_i \cdot \bm_{\rho}}{\wtga(x_i \cdot \bm_{\rho})}.
\]
This completes the proof.
\end{proof}
\begin{Def}
The $G$-brick $\Gamma$ in Proposition~\ref{Prop:From Gamma' to Gamma:S(Gamma)=S(Gamma')} is called the \emph{natural inverse of $\Gamma'$} and denoted by $\roundbrick(\Gamma')$.
\end{Def}
\subsection{Star subdivisions and bricksets}\label{Sec:star subdivisions and bricksets}
Let $G$ be a finite diagonal group in $\GL_n(\C)$. Let $X$ denote the quotient variety $\C^n/G$ and $X_v$ the toric variety given by the good star subdivision at $v=\frac{1}{r}(a_1,\ldots,a_n)$. Recall that the toric fan of $X_v$ contains the $n$-dimensional cone
\[
\sigma_k:=\Cone(e_1,\ldots,\hat{e_k},v,\ldots,e_n).
\]
Note that $X_v$ is covered by the affine toric open sets
\[
U_k=\Spec\C[\sigma_k\dual\cap M]\iso \C^n/G_k,
\]
where $G_k=L/L_k$.

Assume that $Y$ is a normal toric variety admitting a proper birational morphism $Y\to X$. Let $\Sigma$ denote the toric fan of $Y$. Assume further that there exists a dominant toric morphism $\overline{\varphi}\colon Y\to X_v$ fitting into the commutative diagram:
\[ 
\begin{tikzcd} 
Y \arrow{r}{\overline{\varphi}}\arrow{rd} & X_v\arrow{d} \\&X.\end{tikzcd}\]

As is standard in toric geometry (see eg.\ Section~3.3~in~\cite{CLS}), for each cone $\sigma\in\Sigma$, there exists a cone $\sigma_k$ such that $\sigma \subset \sigma_k$. Therefore for each $k$, $\varphi$ induces the following toric morphism
\[
\overline{\varphi}_k\colon Y_k \to U_k,
\]
where $Y_k$ is the toric variety whose fan consists of the cones $\sigma \in \Sigma$ satisfying $\sigma \subset \sigma_k$. Note that since $X_v\to X$ is projective, if the morphism $Y\to X$ is projective, then so is $\varphi_k$.
\begin{Thm}\label{Thm:subdivision and brickset}
With the assumption above, further assume that each $Y_k\to U_k$ has a $G_k$-brickset $\gr_k$. Define
\[
\gr:=\bigcup_k  \left\{\roundbrick(\Gamma') \st \Gamma' \in \gr_k \right\}.
\]
Then $\gr$ is a $G$-brickset for the morphism $Y\to X$.
\end{Thm}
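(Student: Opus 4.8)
The plan is to verify the two conditions of Definition~\ref{Def:brickset} for $\gr$ directly, the main inputs being Proposition~\ref{Prop:From Gamma' to Gamma:S(Gamma)=S(Gamma')} together with the way $\overline{\varphi}$ organises the top-dimensional cones of $Y$.

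First I would settle the combinatorics. Write $\Sigma$ for the fan of $Y$ and $\mco$ for its set of $n$-dimensional cones. Since $Y\to X$ is proper birational we have $|\Sigma|=\sigma_+$, and since $\overline{\varphi}\colon Y\to X_v$ is a toric morphism over $X$, the fan $\Sigma$ refines the fan of $X_v$; thus every $\sigma\in\Sigma$ lies in some $\sigma_k$, and for $n$-dimensional $\sigma$ this $\sigma_k$ is unique, because two distinct maximal cones of $X_v$ intersect in a common proper face of dimension $<n$. Hence $\mco=\bigsqcup_k\mco_k$, where $\mco_k$ is the set of $n$-dimensional cones of $Y_k$; each $Y_k$ is a normal open toric subvariety of $Y$, and the hypothesis supplies a $G_k$-brickset $\gr_k=\{\Gamma'_\sigma\mid\sigma\in\mco_k\}$ for $Y_k\to U_k$. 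I would then pin down the lattice underlying $\gr_k$: presenting $U_k\cong\C^n/G_k$ via the eigencoordinates $\xi_1,\dots,\xi_n$, the ambient $\C^n$ is $\Spec\C[\sigma_k\dual\cap M_k]$, which is indeed smooth because $e_1,\dots,\widehat{e_k},\dots,e_n,v$ is a $\Z$-basis of $L_k$, so that $\sigma_k\dual\cap M_k$ is the free commutative monoid on the dual basis $\xi_1,\dots,\xi_n$; and the quotient map to $U_k=\Spec\C[\sigma_k\dual\cap M]$ is induced by $L_k\hookrightarrow L$, so the lattice of $G_k$-invariant monomials on this $\C^n$ is exactly the global lattice $M=\Homz(L,\Z)$. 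Therefore the defining property of $\gr_k$ reads $S(\Gamma'_\sigma)=\sigma\dual\cap M$, with the same $M$ that Definition~\ref{Def:brickset} uses for $G$.

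With this in hand the rest is short. For $\sigma\in\mco_k$, Proposition~\ref{Prop:From Gamma' to Gamma:S(Gamma)=S(Gamma')}(i) gives that $\Gamma_\sigma:=\roundbrick(\Gamma'_\sigma)$ is a $G$-brick with $S(\Gamma_\sigma)=S(\Gamma'_\sigma)=\sigma\dual\cap M$; hence $\gr$ is a set of $G$-bricks and condition~(ii) of Definition~\ref{Def:brickset} holds for the assignment $\sigma\mapsto\Gamma_\sigma$. For condition~(i) I would check that $\mco\to\gr$, $\sigma\mapsto\Gamma_\sigma$ (with $k$ and $\Gamma'_\sigma$ the data attached to $\sigma$ above) is a bijection: it is surjective by the definition of $\gr$, and injective because $\Gamma_\sigma=\Gamma_\tau$ forces $\sigma\dual\cap M=\tau\dual\cap M$, hence $\sigma\dual=\Cone(\sigma\dual\cap M)=\Cone(\tau\dual\cap M)=\tau\dual$ (a full-dimensional strongly convex rational cone is the cone on its lattice points), hence $\sigma=\tau$ and $k=k'$. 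In particular the sets $\{\roundbrick(\Gamma')\mid\Gamma'\in\gr_k\}$ are pairwise disjoint and $|\gr|=\sum_k|\gr_k|=\sum_k|\mco_k|=|\mco|$. Thus $\gr$ is a $G$-brickset for $Y\to X$.

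The toric facts about $Y_k$ and the cone and duality manipulations are routine; the step I expect to be the real crux, and the likeliest place to slip, is the lattice identification in the second paragraph --- confirming that the ``$M$'' appearing in the hypothesis ``$\gr_k$ is a $G_k$-brickset'' is literally $\Homz(L,\Z)$, so that the equality $S(\roundbrick(\Gamma'))=S(\Gamma')$ of Proposition~\ref{Prop:From Gamma' to Gamma:S(Gamma)=S(Gamma')} can be fed back into Definition~\ref{Def:brickset} unchanged. Everything downstream is bookkeeping.
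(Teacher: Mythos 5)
Your proposal is correct and follows essentially the same route as the paper's proof: decompose the maximal cones of $Y$ according to the unique $\sigma_k$ containing them, invoke Proposition~\ref{Prop:From Gamma' to Gamma:S(Gamma)=S(Gamma')} to get $S(\roundbrick(\Gamma'))=S(\Gamma')=\sigma\dual\cap M$, and conclude via the defining properties of the $G_k$-bricksets. You simply spell out some details the paper leaves implicit (the identification of the invariant lattice for $G_k$ with $M$, and the injectivity of $\sigma\mapsto\Gamma_\sigma$), which is fine.
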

\begin{proof} 
Let $\mco$ be the set of the $n$-dimensional cones in the fan $\Sigma$ of $Y$.
From Proposition~\ref{Prop:From Gamma' to Gamma:S(Gamma)=S(Gamma')}, it follows that every object in the set $\gr$ is a $G$-brick. It suffices to prove that the set $\gr$ satisfies:
\begin{enumerate}
\item there exists a bijection $\mco \to \gr$ sending $\sigma$ to $\Gamma_{\sigma}$;
\item $S(\Gamma_{\sigma})=\sigma\dual \cap M$. 
\end{enumerate}

Let $\sigma$ be an arbitrary $n$-dimensional cone in $\Sigma$. Then there exists a unique cone $\sigma_k$ such that $\sigma \subset \sigma_k$. By the assumption, there is a unique $G_k$-brick $\Gamma' \in \gr_k$ such that  
$S(\Gamma')=\sigma \dual \cap M$.
Define
\[
\Gamma_{\sigma}=\roundbrick(\Gamma'):=\left\{\bm \in \lau \st \phi_{k}(\bm) \in \Gamma' \right\}.
\]
By Proposition~\ref{Prop:From Gamma' to Gamma:S(Gamma)=S(Gamma')}, we have 
\[
S(\Gamma_{\sigma})=S(\Gamma')=\sigma \dual \cap M,
\]
and the proof is completed.
\end{proof}
Please note that by Proposition~\ref{Prop:if we have a brickset enough to show there exists theta}, if there is $\theta\in\Theta$ satisfying that every $\Gamma$ in $\gr$ is $\theta$-stable, then $Y$ is isomorphic to $\yth$.
\subsection{Star subdivisions and stability parameters}\label{Sec:star subdivisions and stability parameters}
In this section, we discuss the existence of a stability parameter $\theta$ such that every $G$-brick in the brickset described in Theorem~\ref{Thm:subdivision and brickset} is $\theta$-stable.

Consider the good star subdivision at $v=\frac{1}{r}(a_1,\ldots,a_n)$. 
For each $k$, let $\Theta^{(k)}$ be the GIT parameter space of $G_k$-constellations.
Remember that by Lemma~\ref{Lem:phi(m+n)=phi(m)+(n)/phi_k induces a surjective map G dual -> G_k dual} we have the well-defined surjective map
\[
\phi_k \colon G\dual \to G_k\dual
\]
induced by the round down function $\phi_k$. Note that the linear map
\[
\roundstab\colon \Theta \to \Theta^{(k)}
\]
defined by
\begin{equation}
[\roundstab(\theta)](\chi)=\sum\limits_{\phi_k(\rho)=\chi} \theta(\rho) \text{ for $\chi \in G_k\dual$}
\end{equation}
is well-defined.

Let $\rho_i$ denote the irreducible representation of $G$ whose weight is $i$ and $\chi_j$ the irreducible representation of $G_k$ whose weight is $j$.
First note that $\Theta$, which is a $\Q$-vector space of $(r-1)$-dimension, has a $\Q$-basis $\left\{\theta_i\in \Hom_{\Z} (R(G),\Q)\st 1 \leq i < r\right\}$ where
\begin{equation}\label{Eqtn:basis of Theta}
\theta_i(\rho_l)=
\begin{cases}
1 &\text{if $\rho_l=\rho_i$,}\\
-1 &\text{if $\rho_l$ is trivial,}\\
0 &\text{otherwise,}\\
\end{cases}
\end{equation}
for $\rho_l\in G\dual$. By the definition of the round down functions (see Lemma~\ref{Lem:phi(m+n)=phi(m)+(n)/phi_k induces a surjective map G dual -> G_k dual} or Remark~\ref{Rem:description of G dual -> G_k dual}), we have
\[
[\roundstab(\theta_i)](\chi_j)=
\begin{cases}
1 &\text{if $j = i \mod a_k$,}\\
-1 &\text{if $\chi_j$ is trivial,}\\
0 &\text{otherwise,}\\
\end{cases}
\]
for $\chi_j\in G_k\dual$. In particular, $\roundstab(\theta_i) \equiv \roundstab(\theta_{i'})$ if $i \equiv i' \mod a_k$.
\begin{Rem}
As is discussed above, $\roundstab\colon \Theta \to \Theta^{(k)}$ is surjective. Indeed, $\roundstab(\theta_i)$ for $1 \leq i<a$ form a $\Q$-basis of $\Theta^{(k)}$.
\erem

From now on, we only consider 3-dimensional cases. Consider the good star subdivision at $v=\frac{1}{r}(1,a,b)$ with $a < b$.
\begin{Lem}\label{Lem:the existence of partial solns for star subdivision}
Consider the good star subdivision at $v=\frac{1}{r}(1,a,b)$ with $a<b$. Assume that $a$ and $b$ are coprime. Given $\theta^{(k)}\in\Theta^{(k)}$ for $k=2,3$, there exists $\theta_P \in \Theta$ such that
\begin{equation}\label{Eqtn:partial solution}
\roundstab(\theta) \equiv \theta^{(k)}
\end{equation}
for all $k$.
\end{Lem}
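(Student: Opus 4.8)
The plan is to reduce the statement to a question of linear algebra. The two round down functions induce linear maps $(\phi_2)_{\star}\colon\Theta\to\Theta^{(2)}$ and $(\phi_3)_{\star}\colon\Theta\to\Theta^{(3)}$, and it suffices to show that the combined map
\[
\Psi=\big((\phi_2)_{\star},(\phi_3)_{\star}\big)\colon \Theta\longrightarrow \Theta^{(2)}\oplus\Theta^{(3)}
\]
is surjective; for then any $\theta_P\in\Theta$ with $\Psi(\theta_P)=(\theta^{(2)},\theta^{(3)})$ satisfies the two required equations. I would verify surjectivity using the explicit $\Q$-bases at hand: $\{\theta_i\mid 1\le i<r\}$ of $\Theta$, $\{\eta_j:=(\phi_2)_{\star}(\theta_j)\mid 1\le j<a\}$ of $\Theta^{(2)}$, and $\{\zeta_j:=(\phi_3)_{\star}(\theta_j)\mid 1\le j<b\}$ of $\Theta^{(3)}$. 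From the description of the round down functions one has $(\phi_2)_{\star}(\theta_i)=\eta_{i\bmod a}$ and $(\phi_3)_{\star}(\theta_i)=\zeta_{i\bmod b}$ for all $i$, with the convention $\eta_0=\zeta_0=0$, so $\Psi$ is represented in these bases by an explicit $0/1$ matrix.

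Since the target is finite dimensional, $\Psi$ is surjective if and only if no nonzero linear functional on $\Theta^{(2)}\oplus\Theta^{(3)}$ annihilates its image. Writing such a functional as a pair of tuples $\lambda=(\lambda_1,\dots,\lambda_{a-1})$ and $\mu=(\mu_1,\dots,\mu_{b-1})$, and again setting $\lambda_0=\mu_0=0$, the requirement that it kill every $\Psi(\theta_i)$ becomes the homogeneous system
\[
\lambda_{i\bmod a}+\mu_{i\bmod b}=0\qquad\text{for }i=1,\dots,r-1,
\]
so the task is to show this system forces $\lambda=\mu=0$.

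I would encode the system in the bipartite graph $H$ with vertex set $\{0,1,\dots,a-1\}\sqcup\{0,1,\dots,b-1\}$ and with one edge from $i\bmod a$ (first side) to $i\bmod b$ (second side) for each $i=1,\dots,r-1$. Each edge imposes $\lambda_u=-\mu_v$, so along any connected component all the $\lambda$'s are equal to a single constant and all the $\mu$'s to its negative; hence the system forces $\lambda=\mu=0$ precisely when every connected component of $H$ contains one of the two pinned vertices, $0$ on the first side or $0$ on the second side. It is therefore enough to prove $H$ connected, and this is where the goodness inequality $a+b\le r$ enters. The edges with $i=1,\dots,b-1$ attach each second-side vertex $i$ as a leaf to the first-side vertex $i\bmod a$; since $a\le b-1$, the first-side vertices reached this way already exhaust $\{0,1,\dots,a-1\}$. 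The edges with $i=b,b+1,\dots,b+a-1$, all present because $b+a-1\le r-1$, connect the first-side vertex $b\bmod a$ to the pinned second-side vertex $0$ (take $i=b$) and, for $t=1,\dots,a-1$, merge the first-side vertices $t$ and $(t+(b\bmod a))\bmod a$ (take $i=b+t$, since the second-side vertex $t$ is already a leaf on the first-side vertex $t$). As $\gcd(a,b)=1$, the residue $b\bmod a$ is a unit modulo $a$, so these last $a-1$ identifications, viewed as edges on $\Z/a$, form a Hamiltonian cycle with one edge deleted, i.e.\ a spanning path; they merge all first-side vertices into one component, which together with the leaves is all of $H$.

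The remaining checks are routine (the degenerate case $a=1$, where $\Theta^{(2)}=0$ and the assertion is just the already-known surjectivity of $(\phi_3)_{\star}$, is immediate). The one point that requires care is the connectivity argument for $H$: one must make sure the bound $r\ge a+b$ really supplies edges with exactly the residues needed to link the two sides and to merge the first side. Once $H$ is connected, $\Psi$ is surjective and the desired $\theta_P$ exists.
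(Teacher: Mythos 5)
Your proposal is correct and takes essentially the same route as the paper: the paper's proof consists exactly of reducing the lemma to surjectivity of $\phi_{\star}=\left((\phi_2)_{\star},(\phi_3)_{\star}\right)\colon\Theta\to\Theta^{(2)}\oplus\Theta^{(3)}$ and asserting, by an unexplained ``direct calculation'' using $\gcd(a,b)=1$, that $\{\phi_{\star}(\theta_i)\mid 1\le i\le a+b-2\}$ is linearly independent. Your dual annihilator/graph-connectivity argument is a valid (and fully detailed) way of carrying out that omitted verification, with coprimality and the goodness bound $a+b\le r$ entering precisely where needed.
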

\begin{proof}
Consider the linear map
\[
\phi_{\star}=\left((\phi_2)_{\star},(\phi_3)_{\star}\right)\colon \Theta \to \Theta^{(2)}\oplus \Theta^{(3)}.
\]
We need to prove that $\phi_{\star}$ is surjective. It suffices to show that \[
\left\{ \phi_{\star}(\theta_i) \st 1 \leq i \leq a+b-2 \right\}
\]
is linearly independent as the dimension of $\Theta^{(2)}\oplus \Theta^{(3)}$ is $a+b-2$. 
Using the fact that $a$ and $b$ are coprime, the assertion follows from a direct calculation.
\end{proof}
\section{Main theorem}
\subsection{Toric minimal model program}\label{Sec:toric mmp} In this section, we recall the birational geometry of toric varieties (see \cite{Rtoric}).
Reid\cite{Rtoric} introduced a combinatorial criterion for a toric variety to have terminal singularities and canonical singularities.
\begin{Thm}[Reid\cite{Rtoric}]\label{Thm:Criterion of term.can sings} Let $X$ be the toric variety corresponding to a fan $\Sigma$ with  a lattice $L$ and the dual lattice $M$. Then $X$ has only terminal singularities (resp.\ canonical singularities) if and only if any cone $\sigma \in \Sigma$ satisfies the conditions (i) and (ii) (resp.\ (i) and (iii)):
\begin{enumerate}
\item there exists an element $\bm \in M_{\Q}$ such that $\langle \bu,\bm \rangle=1$ for any primitive vector $\bu$ of $\sigma$;
\item there are no other lattice points in the set $\left\{ \bu \in \sigma \st \langle \bu, \bm \rangle \leq 1 \right\}$ except vertices;
\item there are no other lattice points in the set $\left\{ \bu \in \sigma \st \langle  \bu, \bm \rangle < 1 \right\}$ except the origin.
\end{enumerate}
\end{Thm}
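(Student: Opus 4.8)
This is a classical theorem of Reid, and the plan is to reconstruct the standard toric discrepancy computation: translate ``terminal'' (resp.\ ``canonical'') into a statement about discrepancies of toric proper birational models $X'\to X$, compute those discrepancies combinatorially in terms of the fan $\Sigma$, and then recognise conditions (i)--(iii). First I would fix the local picture. On the toric variety $X$ one has $K_X=-\sum_\rho D_\rho$, the sum over the rays $\rho$ of $\Sigma$, and a torus-invariant $\Q$-divisor $\sum_\rho a_\rho D_\rho$ is $\Q$-Cartier on the affine chart $U_\sigma$ exactly when there is $\bm_\sigma\in M_\Q$ with $\langle\bu_\rho,\bm_\sigma\rangle=-a_\rho$ for every ray $\rho$ of $\sigma$, where $\bu_\rho$ denotes the primitive generator of $\rho$ (see e.g.\ \cite{CLS}). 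Applied to $K_X$, condition (i) for a cone $\sigma$ is precisely the assertion that $K_X$ is $\Q$-Cartier on $U_\sigma$, with local equation $\chi^{-\bm_\sigma}$; if (i) fails for some $\sigma$ then $X$ is not $\Q$-Gorenstein and so is neither terminal nor canonical, and I may assume (i) holds for every $\sigma\in\Sigma$.

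Next I would record the discrepancy formula for an arbitrary toric refinement $f\colon X'\to X$ coming from a refinement $\Sigma'$ of $\Sigma$: pulling back the principal divisor $\operatorname{div}(\chi^{-\bm_\sigma})=K_X|_{U_\sigma}$ along $f$ yields
\[
K_{X'}-f\pull K_X=\sum_{\rho'}\bigl(\langle\bu',\bm_{\sigma(\bu')}\rangle-1\bigr)\,D_{\rho'},
\]
where $\rho'$ runs over the rays of $\Sigma'$, $\bu'$ is the primitive generator of $\rho'$, and $\sigma(\bu')$ is the smallest cone of $\Sigma$ containing $\bu'$. The coefficient vanishes for a ray already in $\Sigma$ (as $\langle\bu_\rho,\bm_\sigma\rangle=1$), and for an exceptional ray it is the discrepancy $\langle\bu',\bm_\sigma\rangle-1$, which depends only on $\bu'$ and $\sigma(\bu')$. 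Taking $\Sigma'$ to be the star subdivision of $\Sigma$ at a primitive lattice point $\bu$ of a cone $\sigma$, this shows the exceptional divisor $D_\bu$ has discrepancy $\langle\bu,\bm_\sigma\rangle-1$; as $\bu$ and $\sigma$ vary, these exhaust the toric divisorial valuations over $X$.

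Finally I would read off the criterion, using the standard facts that $X$ has terminal (resp.\ canonical) singularities iff every exceptional divisor over $X$ has positive (resp.\ nonnegative) discrepancy, and that for a toric variety it suffices to test the toric divisorial valuations. This says: for each $\sigma\in\Sigma$ and each primitive lattice point $\bu\in\sigma$ other than $\zr$ and the $\bu_\rho$, one has $\langle\bu,\bm_\sigma\rangle>1$ (resp.\ $\langle\bu,\bm_\sigma\rangle\geq 1$). Since $\langle\,\cdot\,,\bm_\sigma\rangle$ is linear, takes the value $1$ at each $\bu_\rho$, and is strictly positive on $\sigma\setminus\{\zr\}$, the truncated cone $\{\bu\in\sigma\st\langle\bu,\bm_\sigma\rangle\leq 1\}$ is the pyramid with apex $\zr$ over the convex hull of the $\bu_\rho$, and its vertices are exactly $\zr$ and the $\bu_\rho$. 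A short scaling argument (write $\bu=k\bu_0$ with $\bu_0$ primitive) shows one may drop the primitivity restriction, so the terminal condition becomes exactly (ii) and the canonical condition exactly (iii); together with (i) this proves the theorem.

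The only genuinely non-elementary ingredient is the reduction ``it suffices to test toric divisorial valuations'', which I would take from the general discrepancy formalism (discrepancies of a $\Q$-Gorenstein variety are detected on a single log resolution) combined with toric resolution of singularities; I expect this to be the main obstacle. The remaining pieces --- the toric canonical divisor formula, the pullback computation of $K_{X'}-f\pull K_X$, and the geometry of the truncated cone --- are routine.
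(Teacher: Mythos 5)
The paper does not prove this statement---it is quoted verbatim from Reid's ``Decomposition of toric morphisms''---and your reconstruction is precisely the standard argument used there: reduce to the $\Q$-Gorenstein condition (i), compute discrepancies of toric models via $\langle \bu',\bm_\sigma\rangle-1$, and translate positivity (resp.\ nonnegativity) into the lattice-point conditions (ii) (resp.\ (iii)). Your proof is correct, including the two points needing care (the one-log-resolution criterion justifying that toric valuations suffice, and the scaling argument removing primitivity), so there is nothing to add.
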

\begin{Thm}[Reid\cite{Rtoric}]\label{Thm:Toric MMP}
Let $X$ be a quasiprojective toric variety and $V\rightarrow X$ a projective birational toric morphism with $V$ smooth. Then there exists the following diagram
\[ 
\begin{tikzcd} 
V \arrow[dashed]{r}\arrow{rd} & Y \arrow{r}{\overline{\varphi}}\arrow{d}{{\varphi}} & X_{\mathrm{can}}\arrow{ld}{\nu} \\&X,\end{tikzcd}\]
where
\begin{enumerate}
\item $X_{\mathrm{can}}$ has canonical singularities, ${\nu} \colon X_{\mathrm{can}} \to X$ is a projective birational morphism, and $K_{X_{\mathrm{can}}}$ is $\nu$-ample;
\item $Y$ has $\Q$-factorial terminal singularities, ${\varphi} \colon Y \to X$ is a projective birational morphism, and $K_{Y}$ is ${\varphi}$-nef, i.e. $\overline{\varphi}$ is crepant.
\end{enumerate}
\end{Thm}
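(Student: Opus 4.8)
The plan is to run the toric minimal model program directly, with all operations combinatorial, and to indicate in passing the more explicit subdivision-theoretic description. Write $\Sigma_0$ for the fan of $X$, with lattice $L$ and dual $M$, and $\Sigma$ for the fan of $V$, a projective subdivision of $\Sigma_0$; since $V$ is smooth it is $\Q$-factorial and terminal. For notational convenience assume $X$ is $\Q$-Gorenstein (automatic in the application, where $X=\C^n/G$ and $\sigma_+$ is simplicial). Recall from Theorem~\ref{Thm:Criterion of term.can sings} that a cone $\sigma$ is canonical (resp.\ terminal) exactly when, with $\bm_\sigma\in M_\Q$ the linear form equal to $1$ on the primitive generators of $\sigma$, the only lattice point $\bu\in\sigma$ with $\langle\bu,\bm_\sigma\rangle<1$ (resp.\ $\langle\bu,\bm_\sigma\rangle\le1$) is the origin (resp.\ is the origin or a generator); and that a ray $\rho\subseteq\sigma$ with primitive generator $\bu_\rho$ gives a divisor $E_\rho$ of discrepancy $\langle\bu_\rho,\bm_\sigma\rangle-1$ over $X_\sigma$.

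First I would run the $K$-MMP for $V$ over $X$. For a toric morphism the relative cone of curves is rational polyhedral, generated by classes of torus-invariant curves, one for each wall of the fan, and a $K$-negative extremal ray gives either a divisorial contraction, which removes a single ray, or a flipping contraction, whose flip is an explicit re-subdivision of a union of cones using the same rays (Reid). Either way the resulting variety stays $\Q$-factorial (the fan stays simplicial) and terminal (the lattice-point condition above is preserved under $K$-negative moves). The program terminates: a divisorial contraction strictly decreases the finite number of rays, a flip leaves the ray set unchanged, and there are only finitely many fans with a prescribed bounded set of rays, so an infinite sequence of flips is impossible. Let $\varphi\colon Y\to X$ be the output, a projective birational toric morphism with $Y$ $\Q$-factorial terminal and $\KY$ $\varphi$-nef; this is item~(ii), and the composite of the MMP steps is the dashed arrow $V\dashrightarrow Y$.

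Next I would contract the $\KY$-trivial curves. Since $\KY$ is $\varphi$-nef and $\varphi$-big, and a nef toric divisor is relatively semiample, $\KY$ defines a projective birational toric morphism $\overline{\varphi}\colon Y\to X_{\mathrm{can}}:=\Proj_X\bigoplus_{m\ge0}\varphi_{*}\sO_Y(mK_Y)$ together with a projective birational $\nu\colon X_{\mathrm{can}}\to X$, and $\KY=\overline{\varphi}^{*}\KXcan$; hence $\overline{\varphi}$ is crepant and $\KXcan$ is $\nu$-ample. Since $\overline{\varphi}$ is crepant, every divisorial valuation over $X_{\mathrm{can}}$ has the same discrepancy there as over $Y$, so $\ge0$ because $Y$ is terminal; thus $X_{\mathrm{can}}$ has canonical singularities, which is item~(i). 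Both triangles of the diagram commute because all morphisms extend the torus $(\C\mul)^n$. Concretely, $X_{\mathrm{can}}$ is the canonical subdivision of $\Sigma_0$, and $Y$ can alternatively be produced from it in one stroke by choosing, over each maximal cone $\sigma$ of $X_{\mathrm{can}}$, a regular full triangulation of the ``roof'' polytope $\sigma\cap\{\bm_\sigma=1\}$: every new ray then has discrepancy $0$ over $X_{\mathrm{can}}$, simpliciality gives $\Q$-factoriality, and fullness together with the inclusion $\Cone(F)\subseteq\sigma$ (whence $\bm_\tau=\bm_\sigma$ for $\tau=\Cone(F)$) forces terminality.

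The substance lies in the combinatorial Mori theory of toric varieties---the existence and explicit form of extremal contractions and flips (Reid)---and the relative semiampleness of nef toric divisors; given these, the rest, including termination, is formal. The one delicate point in the alternative direct route is showing that the canonical subdivision has canonical singularities with $\KXcan$ genuinely $\nu$-ample, which rests on the interplay of Reid's lattice-point criterion with convexity of support functions and on the fact---easy to forget---that an arbitrary subcone of a canonical cone need not be canonical.
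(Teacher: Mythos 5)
The paper offers no proof of this statement: it is quoted from Reid's \emph{Decomposition of toric morphisms} \cite{Rtoric} and used as a black box, so there is nothing internal to compare against. Your sketch is essentially the route of that cited source (and of standard toric Mori theory): run the relative $K$-MMP for $V$ over $X$ using the polyhedral invariant cone of curves, with divisorial contractions and flips realised combinatorially, then obtain $X_{\mathrm{can}}$ as the relative $\Proj$ of the canonical algebra via relative semiampleness of nef toric divisors, and read off items (i) and (ii) from crepancy and the discrepancy comparison. That overall structure is sound, and your remark that $\Q$-Gorensteinness of $X$ is not really needed (only $K_V$ $\Q$-Cartier matters) is correct.

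The one step that is genuinely incomplete as written is termination of flips. Finiteness of simplicial fans with rays in a fixed finite set only shows that an infinite sequence of flips would have to revisit a fan; it does not by itself exclude a cycle. You need a strictly monotone quantity: under a $K$-negative flip every divisorial valuation has non-decreasing discrepancy, and any valuation whose center lies in the (nonempty) flipped locus---for instance a toric valuation attached to a lattice point interior to that locus---has strictly increasing discrepancy. Since the fan determines all these discrepancies, a cycle of flips is impossible, and together with your finiteness observation (and the fact that divisorial contractions strictly drop the number of rays) this gives termination. Two smaller caveats: in your ``alternative direct route'', the full regular triangulations of the roof polytopes over the separate maximal cones of $X_{\mathrm{can}}$ must be chosen compatibly on shared faces and induced by a single convex support function if you want a well-defined fan with $Y\to X$ projective, not just projectivity chart by chart; and the preservation of terminality and $\Q$-factoriality along the MMP steps, which you assert in one line, deserves at least a reference to the standard discrepancy computation. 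None of this changes the verdict that your approach is the right one---it is the argument the paper implicitly invokes by citing Reid.
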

\begin{Def}
In Theorem \ref{Thm:Toric MMP}, we say that:
\begin{enumerate}
\item the variety $X_{\mathrm{can}}$ is a {\em relative canonical model} of $X$;
\item the variety $Y$ is a {\em relative minimal model} of $X$.
\end{enumerate}
\end{Def}
\begin{Convention}
In this article, relative minimal models of $X$ are always projective over $X$.
\end{Convention}
\subsection{The Craw--Ishii conjecture} \label{Sec:main section}
For a finite abelian subgroup $G$ of $\SL_3(\C)$, Craw and Ishii proved that every projective crepant resolution of $\C^3/G$ is isomorphic to $\mth$ for a suitable parameter $\theta$. 
\begin{Thm}[Craw--Ishii\cite{CI}]\label{Thm:CI theorem}
For a finite abelian subgroup $G$ of $\SL_3(\C)$, let $Y$ be a relative minimal model of $\C^3/G$. Then $Y$ is isomorphic to $\mth$ for a suitable $\theta$.
\end{Thm}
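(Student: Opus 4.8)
The plan is to prove the statement by induction on the order $|G|$, turning a given relative minimal model into the charts of a crepant star subdivision and then assembling a $G$-brickset from the inductively obtained local data. Since $G\subset\SL_3(\C)$ is abelian, $X=\C^3/G$ has Gorenstein canonical singularities, so the relative canonical model of Theorem~\ref{Thm:Toric MMP} is $X$ itself and every relative minimal model $\varphi\colon Y\to X$ is a crepant resolution; in particular $Y$ is smooth and $K_Y=\varphi\pull K_X$. By Theorem~\ref{Thm:CMT} the moduli $\mth$ carries the birational component $\yth$, and for $G\subset\SL_3(\C)$ with generic $\theta$ this component is known to exhaust $\mth$ (the moduli space being irreducible in the $\SL_3$ case); hence it suffices to realise $Y$ as $\yth$ for a suitable generic~$\theta$. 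The base case $|G|=1$ is immediate, with $Y=X=\C^3$.

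For the inductive step I would first extract a crepant divisor from $Y$. As $X$ is singular, the fan of $Y$ contains a ray $\Cone(v)$ with $v=\tfrac1r(a_1,a_2,a_3)$ primitive on the junior simplex $\Conv(e_1,e_2,e_3)$, so $a_1+a_2+a_3=r$; by the ramification formula~\eqref{Eqtn:ramification formula for star subdivision} the star subdivision at $v$ is crepant, and since $a_i+a_j=r-a_k<r$ it is good once $v$ is chosen to generate $L/\Z^3$. Because the fan of $Y$ refines that of $X_v$, there is a dominant toric morphism $\overline\varphi\colon Y\to X_v$ over $X$. Each chart is $U_k\cong\C^3/G_k$ with $G_k=L/L_k$ of order $a_k<r$, and the subfan $Y_k$ of cones contained in $\sigma_k$ yields a relative minimal model $Y_k\to U_k$; by the induction hypothesis $Y_k$ is isomorphic to the birational component of the $\theta^{(k)}$-stable $G_k$-moduli for some generic $\theta^{(k)}$, which supplies a $G_k$-brickset $\gr_k$.

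With the local bricksets available, Theorem~\ref{Thm:subdivision and brickset} assembles the $G$-brickset $\gr=\bigcup_k\{\roundbrick(\Gamma')\mid\Gamma'\in\gr_k\}$ for $Y\to X$, so by Proposition~\ref{Prop:if we have a brickset enough to show there exists theta} the proof reduces to producing one generic $\theta\in\Theta$ making every brick of $\gr$ simultaneously $\theta$-stable. I would attack this in two stages: first, use the surjectivity of the maps $\roundstab\colon\Theta\to\Theta^{(k)}$ — the partial-solution argument behind Lemma~\ref{Lem:the existence of partial solns for star subdivision}, applied to the present $v$ — to choose $\theta$ with $\roundstab(\theta)\equiv\theta^{(k)}$ for every $k$; second, show that $\roundbrick(\Gamma')$ is $\theta$-stable whenever $\Gamma'$ is $\theta^{(k)}$-stable, by passing a submodule-subset $A\subset\roundbrick(\Gamma')$ (as classified in Lemma~\ref{Lem:combinatorial description of submodule}) to its image $\phi_k(A)\subset\Gamma'$ and comparing the value of $\theta$ on the corresponding submodule of $C(\roundbrick(\Gamma'))$ with that of $\theta^{(k)}$ on the submodule of $C(\Gamma')$ through the definition of $\roundstab$.

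The hard part will be this final compatibility: the round-down function collapses the $\xi_k$-tower, so the $\phi_k$-preimage of a proper submodule of $C(\Gamma')$ can acquire extra proper submodules in $C(\roundbrick(\Gamma'))$ whose $\theta$-weights are not pinned down by $\theta^{(k)}$ alone, and the single parameter $\theta$ must in addition keep every lifted brick stable across all charts at once, including along the new divisor $E_v$. I expect to resolve this by perturbing $\theta$ inside the fibres of $\phi_\star=((\phi_2)_\star,(\phi_3)_\star)$ and invoking genericity to avoid the finitely many walls on which some lifted brick would become strictly semistable; verifying that one perturbation works simultaneously for all $k$ is the crux, and it is here that the combinatorics of the junior simplex must be brought fully to bear.
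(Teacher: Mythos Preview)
The paper does not prove this theorem: it is quoted from \cite{CI}, and the sentence immediately following the statement records the actual argument of Craw and Ishii. Their proof starts from a known moduli description of one crepant resolution, namely $\GHilb{3}$ by Bridgeland--King--Reid, and then shows that a flop of $\mth$ is again an $\sM_{\theta'}$; since any two projective crepant resolutions of $\C^3/G$ differ by a finite chain of flops, every relative minimal model is reached. Your proposal is an entirely different strategy: you attempt to reprove the Craw--Ishii theorem by the brickset/round-down machinery that the present paper develops for its \emph{new} cases, using the Craw--Ishii theorem itself only implicitly in the base cases.

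That strategy, as written, has genuine gaps. First, the claim that the fan of $Y$ refines the star subdivision $X_v$ once $\Cone(v)$ appears as a ray of $Y$ is unjustified: having $v$ as a ray does not force the edges $\Cone(v,e_k)$ to lie in the fan of $Y$, and the paper's own Example~\ref{Eg:1/39(1,5,11):no morphism to X_v} exhibits a relative minimal model admitting no morphism to $X_v$. You would need a separate argument, specific to the junior simplex, that \emph{some} good $v$ with $Y\to X_v$ always exists, and you have not supplied one. Second, Lemma~\ref{Lem:the existence of partial solns for star subdivision} is stated only for $v=\tfrac1r(1,a,b)$ with $a,b$ coprime and handles only $k=2,3$; in your setting $v$ is an arbitrary junior generator and all three charts are nontrivial, so surjectivity of $\phi_\star$ is not established.

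Most seriously, the ``hard part'' you flag is precisely the obstruction the paper cannot remove in general. Genericity does not suffice: the extra submodules created by collapsing the $\xi_k$-tower must be made strictly positive by an auxiliary parameter $\vtheta$ lying in the kernel of every $\roundstab$, and the paper constructs such a $\vtheta$ only case by case (Remark~\ref{Rem:key properties of vartheta}, Proposition~\ref{Prop:our stability for 1st case}) and explicitly records the lack of a systematic method as an open question in the final section. Your perturbation-in-the-fibre idea is exactly the search for $\vtheta$, and nothing in your outline shows why a single such $\vtheta$ works simultaneously for all charts and all bricks. By contrast, the flop argument of \cite{CI} sidesteps this entirely: it never needs to build a global $\theta$ from local data.
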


They conjectured that the same holds without the abelian assumption. We further conjecture that the same is true for all finite group $G\subset \GL_3(\C)$ if $Y$ is a smooth relative minimal model.
\begin{Con}[Craw--Ishii conjecture]\label{Craw--Ishii conjecture}
For a finite subgroup $G$ of $\GL_3(\C)$, let $Y$ be a relative minimal model of $\C^3/G$. If $Y$ is smooth, then $Y$ is isomorphic to (the birational component $\yth$ of) $\mth$ for a suitable $\theta$.
\end{Con}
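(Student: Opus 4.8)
The plan is to attack the conjecture by induction on the group order $|G|$, using the machinery of round down functions and bricksets developed in Section~\ref{Sec:Star subdivisions and moduli descriptions}. The base case $|G|=1$ is trivial, since then $X$ is smooth. For the inductive step I would first appeal to Reid's toric minimal model program (Theorem~\ref{Thm:Toric MMP}): given a smooth relative minimal model $\varphi\colon Y\to X$, the combinatorial criterion of Theorem~\ref{Thm:Criterion of term.can sings} realises $Y$ as a smooth toric refinement of $\sigma_+$ in the abelian case, and one seeks a primitive interior lattice point $v=\tfrac{1}{r}(a_1,a_2,a_3)\in L\cap\sigma_+$ whose star subdivision $X_v$ receives a projective birational toric morphism $\overline{\varphi}\colon Y\to X_v$. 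The crucial structural claim is that such a $v$ can always be chosen so that the star subdivision is \emph{good} and $Y\to X$ factors through $X_v$.

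Granting this factorisation, I would restrict $Y$ over each affine chart $U_k\iso\C^3/G_k$ of $X_v$, obtaining toric varieties $Y_k\to U_k$ that are themselves smooth relative minimal models for the strictly smaller groups $G_k=L/L_k$. By the inductive hypothesis each $Y_k\to U_k$ carries a $G_k$-brickset $\gr_k$ together with a generic parameter $\theta^{(k)}\in\Theta^{(k)}$ making every brick in $\gr_k$ be $\theta^{(k)}$-stable. Theorem~\ref{Thm:subdivision and brickset} then assembles the natural inverses $\roundbrick(\Gamma')$ into a genuine $G$-brickset $\gr=\bigcup_k\{\roundbrick(\Gamma')\st\Gamma'\in\gr_k\}$ for $Y\to X$.

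The remaining task is to produce a single generic $\theta\in\Theta$ for which every $\Gamma\in\gr$ is simultaneously $\theta$-stable. For this I would invoke the surjectivity of $\roundstab\colon\Theta\to\Theta^{(k)}$ together with the partial-solution lemma (Lemma~\ref{Lem:the existence of partial solns for star subdivision}) to find $\theta$ with $\roundstab(\theta)\equiv\theta^{(k)}$ for all $k$ simultaneously; one then checks that $\theta^{(k)}$-stability of $\Gamma'$ forces $\theta$-stability of $\roundbrick(\Gamma')$, using the compatibility $\frac{\bn\cdot\bm_\rho}{\wtga(\bn\cdot\bm_\rho)}\in S(\Gamma')$ established in the proof of Proposition~\ref{Prop:From Gamma' to Gamma:S(Gamma)=S(Gamma')}. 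After perturbing $\theta$ within its generic chamber if necessary, Proposition~\ref{Prop:if we have a brickset enough to show there exists theta} yields $Y\iso\yth$, completing the inductive step.

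The main obstacle is twofold. First, the induction via star subdivisions is genuinely toric and therefore presumes $G$ abelian; for non-abelian $G\subset\GL_3(\C)$ the quotient $X$ is not toric and neither the lattice $L$ nor the round down functions are available, so one must either reduce to the abelian case through an equivariant cover or develop a non-combinatorial substitute for bricksets, and this is where the conjecture remains genuinely open. Second, even for abelian $G$ the decisive difficulty is the factorisation step: there need not exist a single good star subdivision $X_v$ through which every relative minimal model $Y$ factors projectively. The numerical hypotheses of the Main Theorem (Theorem~\ref{Intro_a:main theorem}) were imposed precisely to guarantee this factorisation through $X_v$ for $v=\tfrac{1}{r}(1,a,b)$, and I expect that removing them, namely proving the factorisation in general or else treating the several interior rays separately and gluing the resulting bricksets, will be the hardest part of any complete proof.
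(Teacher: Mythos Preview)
Your outline matches the paper's strategy in broad strokes, but there is a genuine gap in the stability step. You assert that once $\roundstab(\theta)\equiv\theta^{(k)}$ for all $k$, the $\theta^{(k)}$-stability of $\Gamma'$ forces $\theta$-stability of $\roundbrick(\Gamma')$. This is false as stated. A submodule $\sG\subset C(\Gamma)$ with $\C$-basis $A\subset\Gamma$ need not satisfy $A=\phi_k^{-1}\big(\phi_k(A)\big)$; when $A\subsetneqq\phi_k^{-1}\big(\phi_k(A)\big)$ there is no submodule of $C(\Gamma')$ to compare with, and the equation $\theta(\sG)=\theta^{(k)}(\sG')$ simply does not hold. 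The paper handles precisely this case by introducing an auxiliary parameter $\vtheta$ with the properties in Remark~\ref{Rem:key properties of vartheta} and taking $\theta=\theta_P+m\vtheta$ for $m\gg0$: the ``full fibre'' submodules (case~(i) in the proof of Proposition~\ref{Prop:our stability for 1st case}) are controlled by $\theta_P$, while the ``partial fibre'' submodules (case~(ii)) are controlled by $\vtheta$. Constructing such a $\vtheta$ is done case by case in Sections~\ref{Sec:1st case}--\ref{Sec:2nd case}, and the paper explicitly flags the lack of a systematic construction as an open question.

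A second, subtler issue concerns your induction. You assume the inductive hypothesis furnishes bricksets and parameters for each $Y_k\to U_k=\C^3/G_k$. In the paper's actual cases the $G_k$ land in $\SL_3(\C)$ (the star subdivision is chosen so that the side cones are Gorenstein), and the input is the already-proven Craw--Ishii theorem for $\SL_3$, not the conjecture for smaller $\GL_3$ groups. For a general abelian $G$ the $G_k$ need not be in $\SL_3$, so your induction would require the very conjecture you are trying to prove; this is not circular in principle, but it means the factorisation obstacle and the $\vtheta$ obstacle recur at every level of the induction, not just the top one. Your identification of the factorisation through $X_v$ and the non-abelian case as the principal obstructions is accurate and agrees with the paper's own discussion.
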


To prove the theorem above, Craw and Ishii showed that a flop of $\mth$ is isomorphic to $\sM_{\theta'}$ for some pamameter $\theta'$ as two crepant resolutions are connected by a sequence of flops. This completes the proof because we already knew that $\GHilb{3}$ is a crepant resolution of $\C^3/G$ by Bridgeland--King--Reid~\cite{BKR} for $G\subset \SL_3(\C)$. 

Note that for $G \not\subset \SL_3(\C)$ we do not have a moduli description of any relative minimal model of $\C^3/G$ yet.

\begin{Rem}
From Theorem~\ref{Thm:CI theorem}, we have a simple corollary as follows. For $G$ a finite abelian subgroup in $\SL_3(\C)$ and $Y$ a projective crepant resolution of $X=\C^3/G$, there exist:
\begin{enumerate}
\item a $G$-brickset $\gr$ for $Y\to X$;
\item a stability parameter $\theta$ such that $\Gamma \in \gr$ is $\theta$-stable.
\end{enumerate}
We use this to prove the Craw--Ishii conjecture for some cases.\erem

\subsection{\for{toc}{The first case: $r=abc+a+b+1$}\except{toc}{The first case: \boldmath${r=abc+a+b+1}$}}\label{Sec:1st case} In this section, as the first example, we prove the Craw--Ishii conjecture for the group $G$ of type $\frac{1}{r}(1,a,b)$ with $r=abc+a+b+1$ where $a,b,c$ are positive integers with $b$ coprime to $a$.
Consider the lattice
\[
L = \Z^3 + \Z\cdot \frac{1}{r}(1,a,b),
\]
and the cone $\sigma_+=\Cone(e_1,e_2,e_3)$. The lattice point $v:=\frac{1}{r}(1,a,b)$ is in the interior of the simplex $\bigtriangleup$ where
\[
\bigtriangleup:=\{\bu \in \sigma_+ \st \langle \bu,x_1x_2x_3 \rangle \leq 1 \},
\]
with considering the monomial $x_1x_2x_3$ as an element in $M_{\Q}$.
Thus the quotient singularity $X:=\C^3/G$ defined by the cone $\sigma_+$ is not a canonical singularity.

Consider the star subdivision of $\sigma_+$ at $v$. Let $X_v$ denote the toric variety corresponding to the star subdivision. From Section~\ref{Sec:star subdivisions and round down functions}, we have:
\begin{enumerate}
\item the cone $\sigma_{1}=\Cone(v,e_2,e_3)$ defines a smooth open set $U_1$;
\item the cone $\sigma_{2}=\Cone(e_1,v,e_3)$ defines the affine toric variety $U_2=\C^3/G_2$ with $G_2$ of type $\frac{1}{a}(1,-r,b)$;
\item the cone $\sigma_{3}=\Cone(e_1,e_2,v)$ defines the affine toric variety $U_3=\C^3/G_3$ with $G_3$ of type $\frac{1}{b}(1,a,-r)$.
\end{enumerate}
Note that $\sigma_{2}$ and $\sigma_{3}$ define Gorenstein 3-fold abelian quotient singularities. Hence the star subdivision at $v$ has only canonical singularities. Since a star subdivision induces a projective toric morphism, from the ramification formula (\ref{Eqtn:ramification formula for star subdivision}), it follows that the star subdivision of $\sigma_+$ at $v$ defines the relative canonical model $X_{\mathrm{can}}$ of $X$, i.e. $X_v$ is the relative canonical model of $X$.

Suppose that $\varphi\colon Y\to X$ is a relative minimal model. Then there exists a projective crepant morphism $\overline{\varphi}\colon Y \to X_v$ fitting into the following commutative diagram:
\[ 
\begin{tikzcd} 
Y \arrow{r}{\overline{\varphi}}\arrow{rd}{{\varphi}} & X_v\arrow{d} \\& X.\end{tikzcd}\]
As is discussed in Section~\ref{Sec:star subdivisions and bricksets}, we have the following three induced projective crepant morphisms:
\begin{enumerate}
\item $\overline{\varphi}_1\colon Y_1 \to U_1= \C^3$;
\item $\overline{\varphi}_2\colon Y_2 \to U_2=\C^3/G_2$;
\item $\overline{\varphi}_3\colon Y_3 \to U_3=\C^3/G_3$.
\end{enumerate}
Here $Y_k$ denotes the toric variety given by the cones $\sigma$ such that $\sigma \subset \sigma_k$. Note that $\overline{\varphi}_1$ is an isomorphism. From the Craw--Ishii Theorem~\cite{CI}, it follows that there exists a generic GIT parameter $\theta^{(k)}$ such that $Y_k$ is the moduli space of $\theta^{(k)}$-stable $G_k$-constellations. Thus we have:
\begin{enumerate}
\item a $G_k$-brickset $\gr_k$ for $Y_k\to U_k$;
\item a stability parameter $\theta^{(k)}$ such that $\Gamma \in \gr_k$ is $\theta^{(k)}$-stable.
\end{enumerate}

By Theorem~\ref{Thm:subdivision and brickset}, there exists a $G$-brickset $\gr$ for $Y\to X$. Note that to the cone $\sigma_1$, we assign the $G$-brick
\[
\Gamma:=\phi_{1}\inv(\one)=\{1,x_1,x_1^2,\ldots,x_1^{r-1}\},
\]
which satisfies $S(\Gamma)=\sigma_1\dual\cap M$.

Now we show that there exists a parameter $\theta$ such that every $\Gamma \in \gr$ is $\theta$-stable. First note that since $a$ and $b$ are coprime, by Lemma~\ref{Lem:the existence of partial solns for star subdivision}, there exists $\theta_P \in \Theta$ satisfies (\ref{Eqtn:partial solution}) for given $\theta^{(2)}$ and $\theta^{(3)}$.

Define the GIT parameter $\vtheta\in\Theta$ by
\begin{equation}\label{Eqtn:def. of vartheta for the 1st case}
\vtheta(\rho) =
\begin{cases}
-1 &\text{if $0 \leq \wt(\rho)<b$,}\\
-1 &\text{if $\wt(\rho)=a+b$,}\\
1 &\text{if $r-b-1 \leq \wt(\rho)<r$,}\\
0 &\text{otherwise.}\\
\end{cases}
\end{equation}
\begin{Rem}\label{Rem:key properties of vartheta}
The parameter $\vtheta$ in~(\ref{Eqtn:def. of vartheta for the 1st case}) has the following properties for each $k$.
\begin{enumerate}
\item For any $\chi \in G_k\dual$, we have $[\roundstab(\vtheta)](\chi)\equiv 0$, i.e.
\[
\sum_{i = j \mod a_k} \vtheta(\rho_i) =0
\]
for each $0\leq j <a_k$.
\item For any $i$ with $0 \leq i <a_k$, we have $\vtheta(\rho_i) <0$.
\item For a monomial $\bm$ of weight $i$ with $a_k\leq i <r$, define 
\[
A:=A(\bm):=\left\{ x_k^l\ \cdot \bm \st \phi_k\big( x_k^l \cdot \bm\big)=\phi_k(\bm) \text{ for some $l\geq 0$}\right\}.
\]
Then we have $\vtheta(A) >0$.
\end{enumerate}
These are the key properties we use for the existence of a suitable parameter.
\erem
\begin{Prop}\label{Prop:our stability for 1st case}
For a sufficiently large natural number $m$, set
\begin{equation}\label{Eqtn:stability parameter m large for the 1st case}
\theta := \theta_P + m \vtheta,
\end{equation}
with $\theta_P$ satisfying (\ref{Eqtn:partial solution}).
If a $G$-brick $\Gamma$ is in $\gr$ described above, then $\Gamma$ is $\theta$-stable.
\end{Prop}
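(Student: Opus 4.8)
The plan is to verify $\theta$-stability of an arbitrary $\Gamma\in\gr$ directly from the definition, exploiting the decomposition $\theta=\theta_P+m\vtheta$ and the three ``key properties'' of $\vtheta$ recorded in Remark~\ref{Rem:key properties of vartheta}. By Lemma~\ref{Lem:combinatorial description of submodule}, a nonzero proper submodule $\sG\subsetneq C(\Gamma)$ corresponds to a proper nonempty ``upward-closed'' subset $A\subsetneq\Gamma$, and $\theta(\sG)=\sum_{\bm_\rho\in A}\theta(\rho)$. I must show $\theta(A)>0$ for every such $A$. The idea is that the dominant term $m\vtheta(A)$ is already $\geq 0$, and is strictly positive unless $A$ is of a very restricted shape; on that exceptional locus the term $\theta_P(A)$ (which satisfies $\roundstab(\theta_P)\equiv\theta^{(k)}$) takes over and is positive because $\theta^{(k)}$ is a genuine stability parameter making $\Gamma'$ stable.

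First I would reduce to the case $\Gamma=\roundbrick(\Gamma')$ for a single $k\in\{2,3\}$ together with the explicit brick $\Gamma=\{1,x_1,\dots,x_1^{r-1}\}$ attached to $\sigma_1$; for the latter the submodules are just the ``tails'' $\{x_1^j,\dots,x_1^{r-1}\}$, and $\vtheta$ of such a tail is easily checked to be $\geq 0$ with equality only for the tail starting at weight $0$ or so — a finite check one can dispatch from the definition~\eqref{Eqtn:def. of vartheta for the 1st case}. For $\Gamma=\roundbrick(\Gamma')$, the main step is to analyze $\vtheta(A)$. Partition $A$ into the fibres of $\phi_k\colon\Gamma\to\Gamma'$: for each $\bk\in\phi_k(A)\subset\Gamma'$ the fibre is (a subset of) a ``string'' $A(\bm)=\{x_k^l\cdot\bm\}$ as in Remark~\ref{Rem:key properties of vartheta}(iii). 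Because $A$ is upward-closed in $\Gamma$ and, by Lemma~\ref{Lem:localizations,lacing} and Lemma~\ref{Lem:bm=bn dot bm'}, multiplication by $x_k$ moves along these strings, each fibre of $A$ over a point of $\phi_k(A)$ is either a full string or a terminal sub-string; and $\phi_k(A)$ is itself upward-closed in $\Gamma'$, hence corresponds to a submodule $\sG'\subsetneq C(\Gamma')$ (or all of $\Gamma'$, the case $\sG'=C(\Gamma')$ being handled separately). For a full string $A(\bm)$ with $\wt(\bm)=i\geq a_k$, property~(iii) gives $\vtheta(A(\bm))>0$; for a string over the trivial weight of $G_k$, i.e. with representatives of weight $i\equiv 0\bmod a_k$, property~(i) gives $\vtheta$ of the full string $=0$ while property~(ii) makes any proper terminal sub-string contribute strictly positively (one drops the weight-$i\equiv0$ end, whose $\vtheta$-value is $\leq$ the negative quantity $\vtheta(\rho_0)+\dots$). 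Summing, $m\vtheta(A)\geq 0$, with equality forcing $\phi_k(A)$ to be the submodule of $C(\Gamma')$ spanned by all strings over nontrivial $G_k$-weights being empty — i.e. $A$ is supported over weights $\equiv 0\bmod a_k$ only — a one-dimensional-in-each-fibre situation.

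On that exceptional locus $\vtheta(A)=0$, so $\theta(A)=\theta_P(A)$. Here I would use that $\phi_k(A)$ defines a (possibly trivial, possibly full) submodule of $C(\Gamma')$: since $\Gamma'$ is $\theta^{(k)}$-stable and $\roundstab(\theta_P)\equiv\theta^{(k)}$, we get $\theta_P(A)=\sum_{\bk\in\phi_k(A)}\theta^{(k)}(\wt_{G_k}\bk)=\theta^{(k)}(\sG')>0$ when $\emptyset\neq\sG'\subsetneq C(\Gamma')$, using that on this locus each fibre has exactly one element so $\theta_P$ of $A$ equals $\roundstab(\theta_P)$ of $\phi_k(A)$; and when $\phi_k(A)=\Gamma'$ one checks $A\neq\Gamma$ forces $A$ to be a proper terminal sub-string over at least one weight-$0$ string, contradicting $\vtheta(A)=0$, or else $A=\Gamma$. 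This closes the argument. Finally, ``sufficiently large $m$'' is needed precisely so that on the complement of the exceptional locus, where $\vtheta(A)\geq 1$, the bounded quantity $\theta_P(A)$ cannot push the sum to $\leq 0$: choose $m>\max_A|\theta_P(A)|$, a finite maximum over the finitely many $A$'s arising from the finitely many $\Gamma\in\gr$.

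I expect the main obstacle to be the bookkeeping in the exceptional-locus case: making precise that ``$\vtheta(A)=0$'' forces $A$ to meet each $x_k$-string over a nontrivial $G_k$-weight in the empty set and each string over the trivial weight in a full string (so that $\phi_k$ restricted to $A$ is a bijection onto $\phi_k(A)$), and then identifying $\theta_P(A)$ with $\theta^{(k)}(\sG')$ via the definition of $\roundstab$. Properties~(i)--(iii) of $\vtheta$ in Remark~\ref{Rem:key properties of vartheta} are tailored to exactly this; the content of the proof is checking that they indeed hold for the explicit $\vtheta$ of~\eqref{Eqtn:def. of vartheta for the 1st case} — a direct but slightly delicate residue computation using $r=abc+a+b+1$, which I would organize by the three cases $a_k=1$ (trivial), $a_k=a$, and $a_k=b$.
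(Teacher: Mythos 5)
Your overall architecture is the paper's: treat the brick over $\sigma_1$ by hand, and for $\Gamma=\roundbrick(\Gamma')$ over $\sigma_k$ analyse a submodule basis $A\subset\Gamma$ fibrewise along $\phi_k$, showing $m\vtheta(A)\geq 0$ and falling back on $\roundstab(\theta_P)\equiv\theta^{(k)}$ in the degenerate case, with $m$ chosen uniformly over the finitely many bricks. But your analysis of the degenerate case is wrong as stated. The correct dichotomy (and the one the paper uses) is: each fibre of $A$ over a point of $\phi_k(A)$ is either the \emph{full} $x_k$-string, which contributes $0$ by Remark~\ref{Rem:key properties of vartheta}(i) --- and this holds for every $G_k$-weight, not only the trivial one --- or a proper upper segment, whose bottom element has $G$-weight in $[a_k,r)$ by Lemma~\ref{Lem:localizations,lacing}, so that Remark~\ref{Rem:key properties of vartheta}(iii) gives a strictly positive contribution. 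Hence $\vtheta(A)=0$ if and only if $A=\phi_k\inv\big(\phi_k(A)\big)$. Your claim that equality forces $A$ to be supported over weights $\equiv 0 \bmod a_k$ with exactly one element per fibre is false, and the ensuing identification of $\theta_P(A)$ with $\theta^{(k)}(\sG')$ ``because each fibre has exactly one element'' cannot work: by definition $[\roundstab(\theta_P)](\chi)=\sum_{\phi_k(\rho)=\chi}\theta_P(\rho)$ sums over the \emph{whole} fibre, so $\theta_P(A)=\theta^{(k)}(\sG')$ requires precisely that $A$ be a full preimage, which is what the correct characterisation supplies. (Relatedly: a full fibre has bottom weight $<a_k$, so your appeal to property~(iii) for ``full strings with $\wt(\bm)\geq a_k$'' is vacuous; and the dropped initial segment of a proper sub-string may contain an element of positive $\vtheta$-value, e.g.\ of weight $r-b-1$ when $a_k=b$, so ``the dropped end is negative by property~(ii)'' is not by itself a valid justification of strict positivity --- that is exactly what (iii), i.e.\ the residue computation with $r=abc+a+b+1$, is for.)

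A second gap: you assert that $\phi_k(A)$ is upward-closed in $\Gamma'$, hence a basis of a submodule of $C(\Gamma')$, for an arbitrary upward-closed $A$. This is not automatic, and the paper proves it only in the full-preimage case, using that hypothesis essentially: given $\xi_j\cdot\phi_k(\bm)\in\Gamma'$ with $\bm\in A$, Lemma~\ref{Lem:Connected Gamma} produces $x_i$ and $\bm'$ with $\phi_k(\bm')=\phi_k(\bm)$ and $\phi_k(x_i\cdot\bm')=\xi_j\cdot\phi_k(\bm)$; one then needs $\bm'\in A$ (true because $A$ is a full preimage) before Lemma~\ref{Lem:combinatorial description of submodule} yields $x_i\cdot\bm'\in A$ and hence $\xi_j\cdot\phi_k(\bm)\in\phi_k(A)$. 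Once you replace your characterisation of the equality case by ``$A$ is a full $\phi_k$-preimage'' and insert this argument, your proof closes and coincides with the paper's: when $\vtheta(A)>0$ large $m$ wins, and when $A=\phi_k\inv\big(\phi_k(A)\big)$ one gets $\theta(A)=[\roundstab(\theta)]$ evaluated on $\phi_k(A)$, i.e.\ $\theta^{(k)}(\sG')>0$, the case $\phi_k(A)=\Gamma'$ being excluded since it would force $A=\Gamma$.
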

\begin{proof} 
Let $\Gamma$ be a $G$-brick in $\gr$ and $\sigma$ the corresponding cone. 

If the cone $\sigma$ is contained in $\sigma_1$, then the corresponding $G$-graph is $\Gamma=\{1,x_1,x_1^2,\ldots,x_1^{r-2},x_1^{r-1}\}$.
Note that any nonzero proper submodule $\sG$ of $C(\Gamma)$ is given by
\[
A=\{x_1^{j},x_1^{j+1}, \ldots,x_1^{r-2},x_1^{r-1}\}
\]
for some $1\leq j \leq r-1$ by Lemma~\ref{Lem:combinatorial description of submodule}. Thus $\vtheta(\sG)>0$ by definition. From this, it follows that $\Gamma$ is $\theta$-stable for sufficiently large $m$.

For the other cases, assume that $\Gamma$ is the $G$-brick corresponding to a cone $\sigma \subset \sigma_k$. Let $\Gamma'$ be the $G_k$-brick corresponding to $\Gamma$ and $\sG$ a nonzero proper submodule of $C(\Gamma)$ with $\C$-basis $A \subset \Gamma$. Recall that 
\[
\Gamma=\left\{\bm \in \lau \st \phi_{k}(\bm) \in \Gamma' \right\},
\]
as in Proposition~\ref{Prop:From Gamma' to Gamma:S(Gamma)=S(Gamma')}.
We have the following two cases:
\begin{enumerate}
\item $A  = \phi_k\inv\big(\phi_k(A)\big):=\left\{\bm \in \lau \st \phi_{k}(\bm) \in \phi_k(A) \right\}$;
\item $A \subsetneqq \phi_k\inv\big(\phi_k(A)\big)$.
\end{enumerate}

In case (i), $\vtheta(\sG)=0$ by definition. Moreover, we can see that the set $\phi_k(A)$ defines a nonzero proper submodule $\sG'$ of $C(\Gamma')$ as follows. Let $\xi_1,\xi_2,\xi_3$ be the the eigencoordinates with respect to the $G_k$-action on $\C^3$. Suppose that $\xi_j \cdot \phi_k(\bm_{\rho}) \in \Gamma'$ for some $\bm_{\rho} \in A$. 
Lemma~\ref{Lem:Connected Gamma} implies that there exist $\bm_{\rho'}$ and $x_i$ such that 
\[
\phi_k(x_i \cdot \bm_{\rho'})=\xi_j\cdot\phi_k(\bm_{\rho}) \quad \text{with} \quad \phi_k(\bm_{\rho'})=\phi_k(\bm_{\rho}).
\]
As $A$ is a $\C$-basis of $\sG$, Lemma~\ref{Lem:combinatorial description of submodule} implies that $x_i \cdot \bm_{\rho'}\in A$. Thus $\xi_j\cdot\phi_k(\bm_{\rho})$ is in $\phi_k(A)$.  This shows that $\phi_k(A)$ is a $\C$-basis of a nonzero proper submodule $\sG'$ of $C(\Gamma')$.
Since 
\[
\roundstab(\theta) \equiv \theta^{(k)},
\]
we have $\theta(\sG)=\theta^{(k)}(\sG')>0$ as $\sG'$ is a submodule of the $\theta^{(k)}$-stable constellation $ C(\Gamma')$.

Consider case (ii). Observe that 
\[
\sum\limits_{\phi_k(\rho')\in\phi_k(A)} \vtheta(\rho') = 0
\]
by the definition of $\vtheta$.
Lemma~\ref{Lem:localizations,lacing} implies that if $\bm_{\rho}$ in $\phi_k\inv\big(\phi_k(A)\big) \setminus A$, then $0 \leq \wt({\rho}) < r-b$. Moreover we have
\[
\sum\limits_{\rho'\in\phi_k\inv\left(\phi_k(A)\right)\setminus A} \vtheta(\rho') < 0.
\]
Thus $\vtheta(\sG)>0$. Therefore $\theta(\sG)>0$ for sufficiently large $m$.

Since there exist a finite number of $G$-bricks in $\gr$, we are done.
\end{proof}
\begin{Rem}\label{Rem:generic parameter for 1st case}
The parameter $\theta$ in Proposition~\ref{Prop:our stability for 1st case} does not need to be generic. However, since the condition for $\theta$ is an open condition, there exists a generic parameter in a small neighbourhood of $\theta$.
\erem
As we have proved the existence of a suitable generic parameter $\theta$, we have the following theorem.
\begin{Thm}\label{Thm:Main Theorem:1st case}
For positive integers $a,b,c$ with $b$ coprime to $a$, let $G$ be the group of type $\frac{1}{r}(1,a,b)$ with $r=abc+a+b+1$. Assume that $Y\rightarrow X:=\C^3/G$ is any relative minimal model of $X$. Then $Y$ is isomorphic to the birational component $\yth$ of the moduli space $\mth$ of $\theta$-stable $G$-constellations for a suitable parameter~$\theta$.
\end{Thm}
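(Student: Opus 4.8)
The plan is to reduce the statement of Theorem~\ref{Thm:Main Theorem:1st case} entirely to the machinery already set up in Sections~\ref{Sec:G-constellations and G-bricks}--\ref{Sec:main section}, by carefully verifying the hypotheses of Theorem~\ref{Thm:Toric MMP}, Theorem~\ref{Thm:subdivision and brickset}, and Proposition~\ref{Prop:our stability for 1st case}, and then invoking Proposition~\ref{Prop:if we have a brickset enough to show there exists theta}. First I would record that, since $r=abc+a+b+1$, the lattice point $v=\tfrac1r(1,a,b)$ generates $L/\Z^3$ (its first coordinate is $1$) and satisfies $1+a\le r$, $1+b\le r$, $a+b\le r$, so the star subdivision of $\sigma_+$ at $v$ is \emph{good}; moreover the computation in Section~\ref{Sec:1st case} shows $X_v$ has only canonical (in fact Gorenstein) singularities and, by the ramification formula~\eqref{Eqtn:ramification formula for star subdivision} together with $a_1+a_2+a_3 = 1+a+b < r$ vs.\ the canonical-model discrepancy bookkeeping, that $\nu\colon X_v\to X$ is precisely the relative canonical model $X_{\mathrm{can}}$. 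This is the geometric input that lets us feed an arbitrary relative minimal model into the combinatorial apparatus.

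Next I would take an arbitrary relative minimal model $\varphi\colon Y\to X$. By Theorem~\ref{Thm:Toric MMP} applied to any smooth projective toric resolution $V\to X$ (which factors through $Y$), $Y$ has $\Q$-factorial terminal singularities and $\KY$ is $\varphi$-nef, hence $\overline\varphi\colon Y\to X_{\mathrm{can}}=X_v$ is crepant and projective; this is the commutative triangle displayed in Section~\ref{Sec:1st case}. Restricting $\overline\varphi$ over the three affine charts $U_1\cong\C^3$, $U_2\cong\C^3/G_2$, $U_3\cong\C^3/G_3$ of $X_v$ (with $G_2$ of type $\tfrac1a(1,-r,b)$ and $G_3$ of type $\tfrac1b(1,a,-r)$) gives projective crepant toric morphisms $\overline\varphi_k\colon Y_k\to U_k$ as in Section~\ref{Sec:star subdivisions and bricksets}. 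Here I must check that $G_2,G_3\subset\SL_3(\C)$: indeed $1+(-r)+b\equiv 1-r+b\equiv 1-(a+b+1)+b = -a \equiv 0\pmod a$ modulo $a$, and similarly $1+a-r\equiv 1+a-(a+b+1)=-b\equiv 0\pmod b$, so both quotients are Gorenstein and the Craw--Ishii Theorem~\ref{Thm:CI theorem} applies. Thus for $k=2,3$ (and trivially $k=1$) there is a generic $\theta^{(k)}$ with $Y_k\cong\sM_{\theta^{(k)}}$, hence a $G_k$-brickset $\gr_k$ for $Y_k\to U_k$ consisting of $\theta^{(k)}$-stable $G_k$-bricks.

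Then I would apply Theorem~\ref{Thm:subdivision and brickset} to produce the $G$-brickset $\gr=\bigcup_k\{\roundbrick(\Gamma')\mid\Gamma'\in\gr_k\}$ for $Y\to X$; to the smooth cone $\sigma_1$ this assigns $\Gamma=\phi_1\inv(\one)=\{1,x_1,\dots,x_1^{r-1}\}$ with $S(\Gamma)=\sigma_1\dual\cap M$. Since $a,b$ are coprime, Lemma~\ref{Lem:the existence of partial solns for star subdivision} furnishes $\theta_P\in\Theta$ with $\roundstab(\theta_P)\equiv\theta^{(k)}$ for $k=2,3$, and Proposition~\ref{Prop:our stability for 1st case} (via the key properties of $\vtheta$ recorded in Remark~\ref{Rem:key properties of vartheta} and the case analysis on submodules of $C(\Gamma)$) shows that for $m\gg0$ the parameter $\theta=\theta_P+m\vtheta$ makes every $\Gamma\in\gr$ $\theta$-stable. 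By Remark~\ref{Rem:generic parameter for 1st case} we may perturb $\theta$ to a generic parameter without losing stability, so Proposition~\ref{Prop:if we have a brickset enough to show there exists theta} yields $Y\cong\yth$, as claimed.

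The genuinely load-bearing steps are: (a) confirming that the star subdivision at $v$ computes the relative canonical model $X_{\mathrm{can}}$ — which is where the arithmetic of $r=abc+a+b+1$ matters, ensuring $v$ lies on (not below) the canonical boundary $\bigtriangleup$ and that the two slanted charts are canonical; and (b) the verification that $G_2,G_3\subset\SL_3(\C)$ so that Craw--Ishii applies on the charts. Everything downstream — the round-down-function transfer of bricksets, the existence of the partial solution $\theta_P$, and the large-$m$ stability argument — is exactly Theorem~\ref{Thm:subdivision and brickset}, Lemma~\ref{Lem:the existence of partial solns for star subdivision}, and Proposition~\ref{Prop:our stability for 1st case}, so the proof of Theorem~\ref{Thm:Main Theorem:1st case} should be a short assembly once (a) and (b) are dispatched. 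I expect the main obstacle in writing the proof cleanly to be step (a): one has to argue that no relative minimal model can fail to factor through $X_v$, which requires knowing that $X_v$ is the \emph{unique} relative canonical model and that crepant morphisms to it are forced by $\KY$ being $\varphi$-nef.
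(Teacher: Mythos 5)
Your proposal follows the paper's own proof essentially verbatim: identify $X_v$ (the star subdivision at $v=\tfrac1r(1,a,b)$) as the relative canonical model with smooth/Gorenstein charts, factor any relative minimal model $Y\to X$ through $X_v$, apply Craw--Ishii on the charts $U_2,U_3$ to get bricksets and parameters $\theta^{(k)}$, transfer them via Theorem~\ref{Thm:subdivision and brickset}, and conclude with $\theta=\theta_P+m\vtheta$ using Lemma~\ref{Lem:the existence of partial solns for star subdivision}, Proposition~\ref{Prop:our stability for 1st case}, Remark~\ref{Rem:generic parameter for 1st case} and Proposition~\ref{Prop:if we have a brickset enough to show there exists theta}. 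The explicit checks you add (goodness of the subdivision, $G_2,G_3\subset\SL_3(\C)$) are correct and consistent with what the paper asserts, so the argument is the same assembly as in Section~\ref{Sec:1st case}.
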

In some small cases (eg.\ $\frac{1}{20}(1,3,4)$), the irreducible component $\yth$ is actually a connected component of $\mth$. However, we do not have a big example such that $\mth$ itself is irreducible.
\begin{Que}
In the situation as above, is $\mth$ in the theorem above irreducible?
\end{Que}

\begin{Eg} \label{Eg:Calculating G-graphs in 1/20(1,3,4)} Let $G$ be the group of type $\frac{1}{20}(1,3,4)$ as in Example~\ref{Eg:Star subdivision of 1/20(1,3,4)}. Consider the star subdivision at $v=\frac{1}{20}(1,3,4)$. Then the star subdivision gives the relative canonical model of $X=\C^3/G$. 

Let $\phi \colon Y\to X$ be a relative minimal model whose fan is shown in Figure~\ref{Fig:recursion process of 1/20(1,3,4)}.

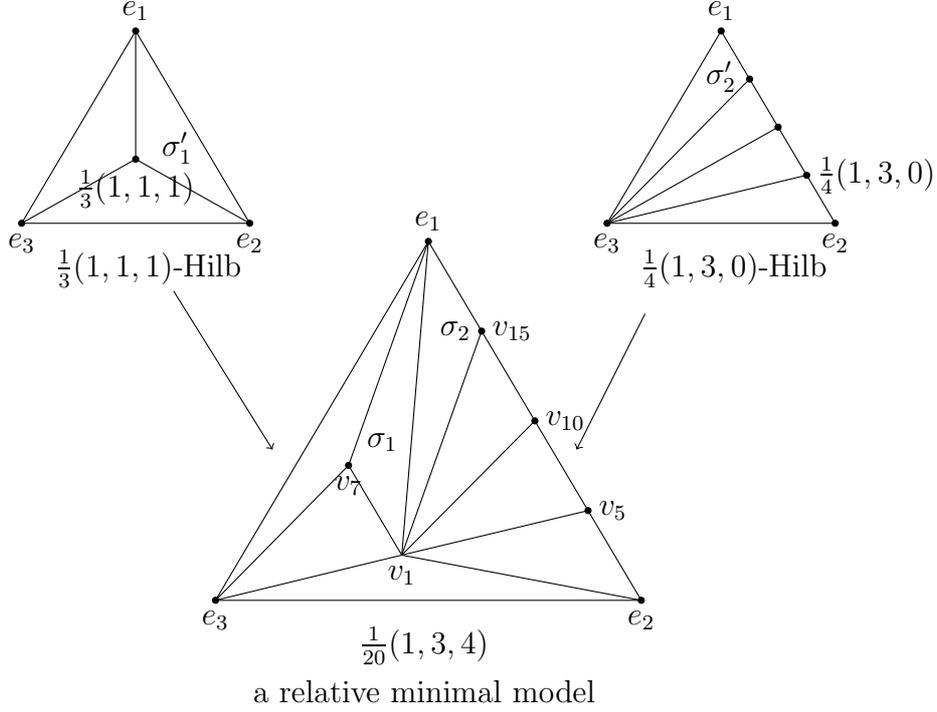
\begin{figure}[h]
\begin{center}
\begin{tikzpicture}
\coordinate [label=below:$e_3$] (re3) at (8.7-0.5-0.5,5);
\coordinate [label=below:$e_2$] (re2) at (11.7-0.5-0.5,5);
\coordinate [label=above:$e_1$] (re1) at (10.2-0.5-0.5,7.55);
\draw[fill] (re1) circle [radius=0.04];
\draw[fill] (re2) circle [radius=0.04];
\draw[fill] (re3) circle [radius=0.04];
\draw (re3) -- (re2);
\draw (re3) -- (re1);
\draw (re2) -- (re1);

\coordinate [label=below:$e_3$] (e3) at (3.05-0.5,0);
\coordinate [label=below:$e_2$] (e2) at (8.65-0.5,0);
\coordinate [label=above:$e_1$] (e1) at (5.85-0.5,4.76);
\draw[fill] (e1) circle [radius=0.04];
\draw[fill] (e2) circle [radius=0.04];
\draw[fill] (e3) circle [radius=0.04];
\draw (e3) -- (e2);
\draw (e3) -- (e1);
\draw (e2) -- (e1);

\coordinate [label=below:$e_3$] (le3) at (0,5);
\coordinate [label=below:$e_2$] (le2) at (3,5);
\coordinate [label=above:$e_1$] (le1) at (1.5,7.55);
\draw[fill] (le1) circle [radius=0.04];
\draw[fill] (le2) circle [radius=0.04];
\draw[fill] (le3) circle [radius=0.04];
\draw (le3) -- (le2);
\draw (le3) -- (le1);
\draw (le2) -- (le1);

\draw[->] (2,4.1) -- (3.8-0.5,2);
\draw[->] (9.2-0.5-0.5,4.1-0.3) -- (7.8-0.5,2);

\coordinate (lv) at (1.5,5.85);
\draw[fill] (lv) circle [radius=0.04];
\node [below] at (lv) {$\tfrac{1}{3}(1,1,1)$};
\foreach \x in {1,2,3}
\draw (le\x) -- (lv);
\node [right] at (1.7,6) {$\sigma'_{1}$};


\foreach \x in {1,2,3}
\coordinate (rv\x) at (11.7-0.5-0.5-\x*0.375, 5+\x*0.6375);
\foreach \x in {1,2,3}
\draw[fill] (rv\x) circle [radius=0.04];
\foreach \x in {1,2,3}
\draw (re3) -- (rv\x);
\node [right] at (rv1) {$\tfrac{1}{4}(1,3,0)$};
\node [below] at (10.2-0.5-0.5,7.3) {$\sigma'_{2}$};


\coordinate [label=below:$v_1$] (v1) at (5.5-0.5,0.6);
\foreach \x in {1,2,3}
\draw (v1) -- (e\x);

\foreach \x in {1,2,3}
\coordinate (vv\x) at (8.65-0.5-\x*0.7, \x*1.19);
\foreach \x in {1,2,3}
\draw[fill] (vv\x) circle [radius=0.04];
\foreach \x in {1,2,3}
\draw (v1) -- (vv\x);

\node [right] at (vv1) {$v_5$};
\node [right] at (vv2) {$v_{10}$};
\node [right] at (vv3) {$v_{15}$};

\coordinate [label=below:$v_7$] (v7) at (4.8-0.5,1.79);
\draw[fill] (v7) circle [radius=0.04];
\draw (v7) -- (e1);
\draw (v7) -- (v1);
\draw (v7) -- (e3);

\node [right] at (4.9-0.5,2.1) {$\sigma_{1}$};
\node [left] at (6.55-0.5,3.57) {$\sigma_{2}$};

    
\node [right] at (0.3,4.4) {$\tfrac{1}{3}(1,1,1)\HHilb$};
\node [right] at (4.8-0.5,-0.6) {$\tfrac{1}{20}(1,3,4)$};
\node [right] at (3.4-0.5,-1.2) {$\text{a relative minimal model}$};
\node [right] at (9.0-0.5-0.5,4.4) {$\tfrac{1}{4}(1,3,0)\HHilb$};

\end{tikzpicture}
\end{center}
\caption{Recursion process for $\frac{1}{20}(1,3,4)$}\label{Fig:recursion process of 1/20(1,3,4)}
\end{figure}
There exist the two induced projective crepant resolutions:
\begin{enumerate}
\item $\overline{\varphi}_2\colon Y_2 \to U_2=\C^3/G_2$;
\item $\overline{\varphi}_3\colon Y_3 \to U_3=\C^3/G_3$.
\end{enumerate}
Here $G_2$ is of type $\frac{1}{3}(1,1,1)$ and $G_3$ is of type $\frac{1}{4}(1,3,0)$. Note that $Y_2$ and $Y_3$ are $\GHii{2}$ and $\GHii{3}$, respectively.

We illustrate how to calculate $G$-bricks associated to the following cones:
\begin{align*}
\sigma_1 &:= \Cone\left((1,0,0), \tfrac{1}{20}(1,3,4),\tfrac{1}{20}(7,1,8)\right)\!, \\[3pt]
\sigma_2 &:= \Cone\left((1,0,0), \tfrac{1}{20}(1,3,4), \tfrac{1}{20}(15,5,0)\right)\!.
\end{align*}

Note that the cone $\sigma_1$ is in $\Cone(e_1,v_1,e_3)$. Moreover, observe that the left fan corresponds to $\GHii{2}$ with $G_2$ of type $\frac{1}{3}(1,1,1)$. Consider the cone $\sigma'_1$ in the fan of $\GHii{2}$ corresponding to $\sigma_1$. Let $\xi,\eta,\zeta$ denote the eigencoordinates for $G_2$. The corresponding $G_2$-brick is
\[
\Gamma'_1 = \big\{ 1, \zeta,\zeta^2 \big\}\!.
\]
The $G$-brick $\Gamma_1$ corresponding to $\sigma_1$ is
\[
\Gamma_1 \eqd \big\{ x^{m_1}y^{m_2}z^{m_3} \in \lau \st \phi_{2}(x^{m_1}y^{m_2}z^{m_3}) \in \Gamma'_1 \big\}
\]
where the left round down function $\phi_2$ is defined by
\[
\phi_{2}(x^{m_1}y^{m_2}z^{m_3})=\xi^{m_1}\eta^{\lf\frac{1}{20}m_1 + \frac{3}{20} m_2 + \frac{4}{20}m_3\rf}\zeta^{m_3}.
\]
Thus 
\[
\Gamma_1=\left\{
\begin{matrix}
y^{-2}z^2 & y^{-1}z^2& z^2 & yz^2 &y^2z^2&y^3z^2 \\
& y^{-1}z& z & yz &y^2z&y^3z&y^4z&y^5z \\
&&1 & y &y^2&y^3&y^4&y^5&y^6
\end{matrix}
\right\}\!.
\]

Observe that the cone $\sigma_2$ is in $\Cone(e_1,e_2,v_1)$.
The right fan is the fan of $\GHii{3}$, where $G_3$ is of type $\frac{1}{4}(1,3,0)$. Let $\alpha,\beta,\gamma$ be the eigencoordinates. For the cone $\sigma'_2$ corresponding to $\sigma_2$, observe that the corresponding $G_3$-brick is
\[
\Gamma'_2 = \big\{ 1, \beta,\beta^2, \beta^3 \big\}.
\]
The $G$-brick $\Gamma_2$ corresponding to $\sigma_2$ is
\[
\Gamma_2 \eqd \big\{ x^{m_1}y^{m_2}z^{m_3} \in \lau \st \phi_{3}(x^{m_1}y^{m_2}z^{m_3}) \in \Gamma'_2 \big\}
\]
where the right round down function $\phi_3$ is
\[
\phi_{3}(x^{m_1}y^{m_2}z^{m_3})=\alpha^{m_1}\beta^{m_2}\gamma^{\lf\frac{1}{20}m_1 + \frac{3}{20} m_2 + \frac{4}{20}m_3\rf}.
\]
Thus 
\[
\Gamma_2=\left\{
\begin{matrix}
y^3z^{-2} & y^3z^{-1} & y^3 & y^3z &y^3z^2 \\
&y^2z^{-1} & y^2 & y^2z &y^2z^2&y^2z^3 \\
&&y & yz &yz^2&yz^3&yz^4 \\
&&1 & z &z^2&z^3&z^4
\end{matrix}
\right\}\!.
\]

Note that $S(\Gamma_1)=\sigma_1\dual\cap M$ and $S(\Gamma_2)=\sigma_2\dual\cap M$.

Now we turn to stability parameters. Since $Y_k$ is $G_k\HHilb$ for each $k=2,3$, from~(\ref{Eqtn:Stab for G-Hilb}) we can take
\[
\theta^{(2)}=(-2,1,1), \quad \theta^{(3)}=(-3,1,1,1).
\]
Then the condition~(\ref{Eqtn:partial solution}) of $\theta_P$ for given $\theta^{(2)}$ and $\theta^{(3)}$ is
\[
\left\{
\begin{array}{ccl}
-2 & =& \sum_{l=0}^6 \theta_P(\rho_{3l}),\\[6pt]
1 & =& \sum_{l=0}^6 \theta_P(\rho_{3l+1}),\\[6pt]
1 & =& \sum_{l=0}^5 \theta_P(\rho_{3l+2}),\\[6pt]
-3 & =& \sum_{l=0}^4 \theta_P(\rho_{4l}),\\[6pt]
1 & =& \sum_{l=0}^4 \theta_P(\rho_{4l+1}),\\[6pt]
1 & =& \sum_{l=0}^4 \theta_P(\rho_{4l+2}),\\[6pt]
1 & =& \sum_{l=0}^4 \theta_P(\rho_{4l+3}).\\[6pt]
\end{array}
\right.
\]
Take 
\[
\theta_P=(-3,0,0,0,0,1,1,1,0,\ldots,0)
\]
as a solution of the equations above. For $\vtheta$ in~(\ref{Eqtn:def. of vartheta for the 1st case}), define $\theta=\theta_P+m\vtheta$:
\[
\theta (\rho_i)=(\theta_P+m\vtheta)(\rho_i) =
\begin{cases}
-3-m &\text{if $i=0$,}\\
-m &\text{if $1\leq i \leq 3$,}\\
0 &\text{if $i=4$,}\\
1 &\text{if $i=5 \mbox{ or }6$,}\\
1-m &\text{if $i=7$,}\\
m &\text{if $15\leq i \leq 19$,}\\
0 &\text{otherwise.}\\
\end{cases}
\]

Consider the $G$-brick $\Gamma_2$ above:
\[
\Gamma_2=\left\{
\begin{matrix}
y^3z^{-2} & y^3z^{-1} & y^3 & y^3z &y^3z^2 \\
&y^2z^{-1} & y^2 & y^2z &y^2z^2&y^2z^3 \\
&&y & yz &yz^2&yz^3&yz^4 \\
&&1 & z &z^2&z^3&z^4
\end{matrix}
\right\}\!.
\]
As examples, consider the two submodules $\sG$, $\sH$ generated by $A$ and $B$, respectively, where
\[
A=\left\{
\begin{matrix}
y^3z^{-2} & y^3z^{-1} & y^3 & y^3z &y^3z^2 \\
&y^2z^{-1} & y^2 & y^2z &y^2z^2&y^2z^3
\end{matrix}
\right\}\!,
\]
\[
B=\left\{
\begin{matrix}
y^3 & y^3z &y^3z^2 \\
y^2 & y^2z &y^2z^2&y^2z^3 \\
y & yz &yz^2&yz^3&yz^4 \\
1 & z &z^2&z^3&z^4
\end{matrix}
\right\}\!.
\]
First consider the submodule $\sG$. Note that $\vtheta(\sG)=0$. By definition, note that $\phi_3(A)=\{\beta^2, \beta^3\}$ forms a basis of a submodule $\sG'$ of $C(\Gamma_2')$ with $\theta(\sG)=\theta^{(3)}(\sG')$. Thus
\[
\theta(\sG)=\theta^{(3)}(\sG')=2>0.
\]
For the submodule $\sH'$, note that $\phi_3\inv\big(\phi_3(B)\big)$ contains $y^2z^{-1}$, $y^3z^{-1}$ and $y^3z^{-2}$. Observe that $\vtheta(\sH)>0$. Thus $\theta(\sH)$ is positive for large enough $m$. More precisely,
\[
\theta(\sH)=-3+1+1+m+m=2m-1
\]
is positive if $m>\frac{1}{2}$.
\eeg

\subsection{\for{toc}{The second case: ${r=abc+a-2b+1}$}\except{toc}{The second case: \boldmath${r=abc+a-2b+1}$}}\label{Sec:2nd case}
Consider the group of type $\frac{1}{r}(1,a,b)$. 
Assume that the star subdivision at $v=\frac{1}{r}(1,a,b)$ gives:
\begin{enumerate}
\item $\sigma_2:=\Cone(e_1,v,e_3)$ is of type $\frac{1}{a}(1,1,1)$ for $a \geq 4$;
\item $\sigma_3:=\Cone(e_1,e_2,v)$ is a Gorenstein quotient singularity.
\end{enumerate}
This means that:
\begin{enumerate}
\item $-r \equiv 1 \mod a$;
\item $1-r+b \equiv 3 \mod a$;
\item $1-r+a \equiv 0 \mod b$.
\end{enumerate}

In the rest of this section, we consider the case where
\[
r=abc-2b+a+1 \quad \text{with} \quad b=ak+1, a\geq 4
\]
for some positive integers $c,k$. Consider the lattice
\[
L = \Z^3 + \Z\cdot \frac{1}{r}(1,a,b),
\]
Let $v$ and $w$ denote the lattice points
\[
v:=\frac{1}{r}(1,a,b)\quad\text{and}\quad w:=\frac{1}{r}(\frac{r+1}{a}, 1, \frac{r+b}{a} ).
\]

Let $X_v$ denote the toric variety corresponding to the star subdivision at $v$. In this case, $X_v$ is not the relative canonical model of $X=\C^3/G$ because the quotient of type $\frac{1}{a}(1,1,1)$ is not canonical for $a \geq 4$. The relative canonical model depends on $c$. We have the two cases:
\begin{enumerate}
\item[(a)]$c\geq 2$;
\item[(b)]$c=1$.
\end{enumerate}
\subsubsection*{Case (a): $c\geq 2$}
Consider the case where $c \geq 2$. In this case, the relative canonical model is given by the fan consisting of the following five cones and their faces:
\[
\begin{array}{lll}
\sigma_1=\Cone (v,e_2, e_3), \ & \sigma_3=\Cone (e_1, e_2, v),\ & \\[2pt]
\sigma_4=\Cone (w, v, e_3), \ & \sigma_6=\Cone (e_1, v, w),\ & \sigma_7=\Cone (e_1, w, e_3).
\end{array}
\]
\begin{figure}
\begin{center}
\begin{tikzpicture}
\coordinate [label=left:$e_3$] (e3) at (0,0);
\coordinate [label=right:$e_2$] (e2) at (8,0);
\coordinate [label=right:$e_1$] (e1) at (4,6.8);
\coordinate (v1) at (3,1);
\draw[fill] (v1) circle [radius=0.05];
\draw[fill] (e3) circle [radius=0.05];
\draw[fill] (e2) circle [radius=0.05];
\draw[fill] (e1) circle [radius=0.05];
\node [above right] at (v1) {$v$};
\coordinate [label=left:$w$] (v7) at (2.2,1.9);
\draw[fill] (v7) circle [radius=0.05];

\draw (e3) -- (e2);
\draw (e3) -- (e1);
\draw (e2) -- (e1);
\draw (e3) -- (v1);
\draw (e2) -- (v1);
\draw (v1) -- (e1);
\draw (v7) -- (e1);
\draw (v7) -- (v1);
\draw (v7) -- (e3);
\node [below] at (4.2,0.6) {$\sigma_{1}$};
\node  at (4.6,3) {$\sigma_{3}$};
\node  at (2,1) {$\sigma_{4}$};
\node  at (2.85,1.9) {$\sigma_{6}$};
\node  at (2,2.7) {$\sigma_{7}$};

\end{tikzpicture}
\end{center}
\caption{Canonical model for $c\geq 2$}
\label{Fig:Canonical model for c geq 2}
\end{figure}
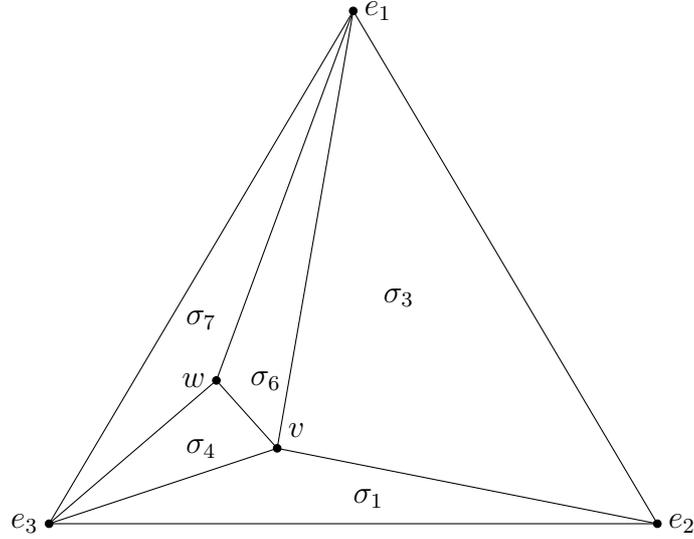
Indeed, the cone $\sigma_2$ defines a Gorenstein quotient singularity and the others define smooth affine toric open sets. We can check directly $\KXcan$ is ample over $X$.

Since there exists a projective morphism $X_{\mathrm{can}}\to X_v$, for every relative minimal model $\varphi \colon Y\to X$, we have a projective morphism $\overline{{\varphi}} \colon Y\to X_v$ fitting into:
\[ 
\begin{tikzcd} 
Y \arrow{r}\arrow{rd}{\overline{{\varphi}}} \arrow{rdd}{{\varphi}}& X_{\mathrm{can}} \arrow{d}\\
& X_{v}\arrow{d}{\nu} \\
&X.
\end{tikzcd}\]

The morphism $\overline{\varphi}$ induces two projective morphisms:
\begin{enumerate}
\item $\overline{\varphi}_2\colon Y_2 \to U_2=\C^3/G_2$;
\item $\overline{\varphi}_3\colon Y_3 \to U_3=\C^3/G_3$.
\end{enumerate}
As is seen above, $G_2$ is of type $\frac{1}{a}(1,1,1)$ and the induced morphism $\overline{\varphi}_2$ is given by
\[
\GHii{2}\to \C^3/G_2
\]
where $G_2$ is of type $\frac{1}{a}(1,1,1)$. Thus it follows that there exist a brickset $\gr_2$ for $Y_2\to U_2$ and $\theta^{(2)}$ for the brickset $\gr_2$. On the other hand, since $U_3$ is a Gorenstein quotient singularity, by the Craw--Ishii Theorem\cite{CI}, there exists a brickset $\gr_3$ for $Y_3\to U_3$ and $\theta^{(3)}$ for the brickset $\gr_3$.

From Theorem~\ref{Thm:subdivision and brickset}, there is a $G$-brickset for $Y\to X$. Now it suffices to find a GIT parameter $\theta$ such that every $\Gamma \in \gr$ is $\theta$-stable.
Define the GIT parameter $\vtheta\in\Theta$ by
\[
\vtheta(\rho) =
\begin{cases}
-1 &\text{if $0 \leq \wt(\rho)<b$,}\\
-1 &\text{if $\wt(\rho)=2ab-5b+3$,}\\
1 &\text{if $\wt(\rho)=r-a-b+2$,}\\
1 &\text{if $r-b \leq \wt(\rho)<r$,}\\
0 &\text{otherwise.}\\
\end{cases}
\]
Note that $\vtheta$ above has the same properties in Remark~\ref{Rem:key properties of vartheta}. Thus the same proof works for the existence of $\theta$ as in Proposition~\ref{Prop:our stability for 1st case}. Therefore the following theorem follows.
\begin{Thm}\label{Thm:Main Theorem:2nd case c geq 2}
Consider positive integers $a,k,c$ with $c\geq 2$, $a\geq 4$ and $b=ak+1$. Let $G$ be the group of type $\frac{1}{r}(1,a,b)$ with $r=abc+a-2b+1$. Let $Y\rightarrow X:=\C^3/G$ be a relative minimal model of $X$. 
Then $Y$ is isomorphic to the birational component $\yth$ of the moduli space $\mth$ of $\theta$-stable $G$-constellations for a suitable parameter~$\theta$.
\end{Thm}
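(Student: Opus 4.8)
The plan is to transcribe the argument of Section~\ref{Sec:1st case}, with the parameter $\vtheta$ replaced by the one displayed immediately before the statement. By the discussion preceding the theorem we already have, via Theorem~\ref{Thm:subdivision and brickset}, a $G$-brickset $\gr$ for $\varphi\colon Y\to X$: it is assembled from $G_k$-bricksets $\gr_k$ on the induced morphisms $\overline{\varphi}_k\colon Y_k\to U_k$, namely the trivial one for $k=1$ (so $Y_1=\C^3$ and the associated brick is $\phi_1\inv(\one)=\{1,x_1,\dots,x_1^{r-1}\}$), the brickset of $Y_2=\GHii{2}$ for $k=2$ with a parameter $\theta^{(2)}\in\Theta^{(2)}$, and a brickset of $Y_3$ furnished by the Craw--Ishii Theorem~\ref{Thm:CI theorem} for the Gorenstein quotient $U_3$, with a parameter $\theta^{(3)}\in\Theta^{(3)}$. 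By Proposition~\ref{Prop:if we have a brickset enough to show there exists theta} it then suffices to produce a generic $\theta\in\Theta$ for which every $\Gamma\in\gr$ is $\theta$-stable.

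First I would check that the star subdivision at $v=\tfrac1r(1,a,b)$ is good: $v$ has first coordinate $1/r$, hence generates $L/\Z^3$, and $1+a$, $1+b$, $a+b$ are all at most $r=abc+a-2b+1$ since $abc\ge 2ab$ dominates $3b$ when $c\ge2$, $a\ge4$, $b=ak+1$. As $\gcd(a,b)=\gcd(a,ak+1)=1$, Lemma~\ref{Lem:the existence of partial solns for star subdivision} supplies $\theta_P\in\Theta$ with $\roundstab(\theta_P)\equiv\theta^{(k)}$ for $k=2,3$, and I would set $\theta:=\theta_P+m\vtheta$ for $m\gg0$, as in~(\ref{Eqtn:stability parameter m large for the 1st case}). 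The technical core is the verification that this $\vtheta$ has the three properties of Remark~\ref{Rem:key properties of vartheta}. Property~(ii) is immediate, since $a_k\le b$ and $\vtheta(\rho_i)=-1$ on $0\le i<b$. For property~(i) one observes that the two length-$b$ blocks $\{0\le\wt<b\}$ (weight $-1$) and $\{r-b\le\wt<r\}$ (weight $+1$) are complete residue systems modulo~$b$, so they cancel in every class mod~$b$; modulo~$a$, using $-r\equiv1$, $b\equiv1\pmod a$ and $b=ak+1$, their net effect is a defect $-1$ in the class $0$ and $+1$ in the class $a-2$, and the two isolated weights $r-a-b+2$ (weight $+1$, $\equiv0\bmod a$) and $2ab-5b+3$ (weight $-1$, $\equiv a-2\bmod a$) are placed to cancel exactly those defects, while both being $\equiv3\pmod b$ so that their mod-$b$ contributions cancel each other; one also checks using $c\ge2$ that these weights are distinct and lie in $[0,r)$. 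Property~(iii) follows from Lemma~\ref{Lem:localizations,lacing}: for a monomial $\bm$ of weight $i$ with $a_k\le i<r$ the set $A(\bm)$ is the progression of weights $i,i+a_k,\dots,i+La_k$ with $i+La_k\in[r-a_k,r)\subseteq[r-b,r)$, and a count of how many of its terms fall in $\{0\le\wt<b\}$, in $\{r-b\le\wt<r\}$, and on the two isolated weights gives $\vtheta(A(\bm))>0$, just as $\wt=a+b$ does in~(\ref{Eqtn:def. of vartheta for the 1st case}).

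Granting these three properties, the stability argument of Proposition~\ref{Prop:our stability for 1st case} applies word for word. If the cone $\sigma$ of $\Gamma$ is contained in $\sigma_1$, then $\Gamma=\{1,x_1,\dots,x_1^{r-1}\}$ and every nonzero proper submodule of $C(\Gamma)$ is a tail $\{x_1^j,\dots,x_1^{r-1}\}$, on which $\vtheta$ is positive, so $\Gamma$ is $\theta$-stable for $m\gg0$. If $\sigma\subset\sigma_k$ with $\Gamma=\roundbrick(\Gamma')$ and $\sG$ is a nonzero proper submodule of $C(\Gamma)$ with $\C$-basis $A\subset\Gamma$, one splits into the case $A=\phi_k\inv(\phi_k(A))$, where $\phi_k(A)$ is the $\C$-basis of a nonzero proper submodule of $C(\Gamma')$ (by Lemmas~\ref{Lem:Connected Gamma} and~\ref{Lem:combinatorial description of submodule}), so $\theta(\sG)=\theta^{(k)}(\phi_k(A))>0$ by $\theta^{(k)}$-stability together with $\roundstab(\theta)\equiv\theta^{(k)}$; and the case $A\subsetneq\phi_k\inv(\phi_k(A))$, where property~(i) kills the $\roundstab$-part of $\theta$ on $\phi_k\inv(\phi_k(A))$ while Lemma~\ref{Lem:localizations,lacing} forces $\sum_{\rho'\in\phi_k\inv(\phi_k(A))\setminus A}\vtheta(\rho')<0$, whence $\vtheta(\sG)>0$ and so $\theta(\sG)>0$ for $m\gg0$. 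Since $\gr$ is finite a single large $m$ works, and Proposition~\ref{Prop:if we have a brickset enough to show there exists theta} gives $Y\cong\yth$; finally, as ``every $\Gamma\in\gr$ is $\theta$-stable'' is an open condition on $\theta$, we may replace $\theta$ by a generic parameter in a small neighbourhood, as in Remark~\ref{Rem:generic parameter for 1st case}.

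The only real obstacle I anticipate is the arithmetic of the three properties of Remark~\ref{Rem:key properties of vartheta}, and within it the placement of the two correction weights $r-a-b+2$ and $2ab-5b+3$: one must check that they fall into precisely the residue classes mod~$a$ (namely $0$ and $a-2$) needed to repair the mod-$a$ defect of the two blocks, that they are congruent mod~$b$ so as to cancel there, that they coincide with neither block nor each other, and that the progression count in property~(iii) stays strictly positive in all cases. This is a finite but somewhat delicate computation in which the hypotheses $b=ak+1$, $a\ge4$ and $c\ge2$ are all used; once it is settled, what remains is a transcription of Section~\ref{Sec:1st case}.
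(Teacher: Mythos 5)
Your proposal follows the paper's own route exactly: factor $Y\to X$ through $X_v$ via the relative canonical model, assemble the $G$-brickset from the $\tfrac1a(1,1,1)$-Hilbert scheme and the Craw--Ishii theorem on the Gorenstein chart, take $\theta=\theta_P+m\vtheta$ with the displayed $\vtheta$, and rerun the stability argument of Proposition~\ref{Prop:our stability for 1st case}. The only difference is that you sketch the residue-class verification of the properties in Remark~\ref{Rem:key properties of vartheta} (which the paper merely asserts), and your mod-$a$ and mod-$b$ bookkeeping for the two correction weights is correct, so the proof is sound.
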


\subsubsection*{Case (b): $c=1$}
For the case where $c=1$, the fan of the relative canonical model consists of the following four cones and their faces:
\[
\begin{array}{lll}
\sigma_1=\Cone (v,e_2, e_3), \quad & \sigma_5=\Cone (e_1, e_2, v, w),\quad & \\[2pt]
\sigma_4=\Cone (w, v, e_3),\quad & \sigma_7=\Cone (e_1, w, e_3).
\end{array}
\]
\begin{figure}
\begin{center}
\begin{tikzpicture}
\coordinate [label=left:$e_3$] (e3) at (0,0);
\coordinate [label=right:$e_2$] (e2) at (8,0);
\coordinate [label=right:$e_1$] (e1) at (4,6.8);
\coordinate (v1) at (3,1);
\draw[fill] (v1) circle [radius=0.05];
\draw[fill] (e3) circle [radius=0.05];
\draw[fill] (e2) circle [radius=0.05];
\draw[fill] (e1) circle [radius=0.05];
\node [above right] at (v1) {$v$};
\coordinate [label=left:$w$] (v7) at (2.2,1.9);
\draw[fill] (v7) circle [radius=0.05];

\draw (e3) -- (e2);
\draw (e3) -- (e1);
\draw (e2) -- (e1);
\draw (e3) -- (v1);
\draw (e2) -- (v1);
\draw (v7) -- (e1);
\draw (v7) -- (v1);
\draw (v7) -- (e3);
\node [below] at (4.2,0.6) {$\sigma_{1}$};
\node  at (4.3,2.8) {$\sigma_{5}$};
\node  at (2,1) {$\sigma_{4}$};
\node  at (2,2.7) {$\sigma_{7}$};

\end{tikzpicture}
\end{center}
\caption{Canonical model for $c= 1$}
\label{Fig:Canonical model for c = 1}
\end{figure}
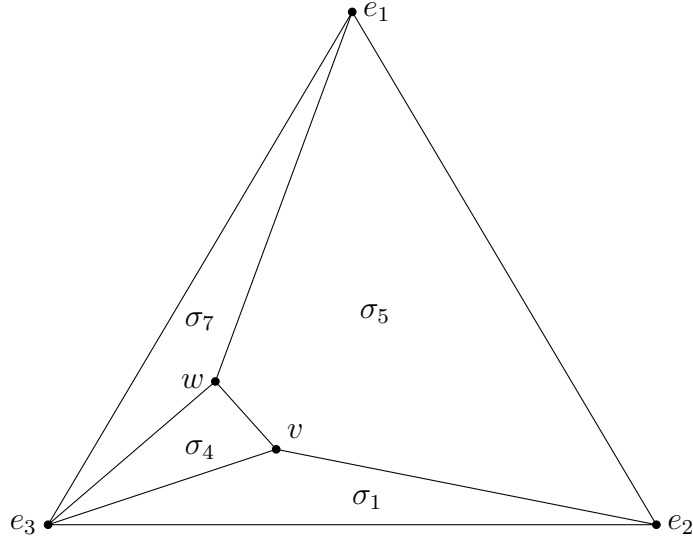
Indeed, the cone $\sigma_5$ defines a toric Gorenstein singularity and hence it is canonical.
Note that since the cone $\sigma_5$ is not simplicial, the corresponding affine toric variety is not a quotient type. In particular, the relative canonical model does not need to be obtained by a sequence of star subdivisions because a star subdivision of a simplicial fan is simplicial.

Note that since $X_{\mathrm{can}}$ is Gorenstein, every relative minimal model is smooth. However, some relative minimal of $X$ does not have a morphism to $X_v$ (See Example~\ref{Eg:1/39(1,5,11):no morphism to X_v}). For a relative minimal model $Y$ admitting a morphism to $X_v$ and for $a\geq 6$, we can prove in the same way as in Case~(a) with the following $\vtheta$:
\[
\vtheta(\rho) =
\begin{cases}
-1 &\text{if $0 \leq \wt(\rho)<b$,}\\
-1 &\text{if $\wt(\rho)=ab-5b+3$,}\\
1 &\text{if $\wt(\rho)=r-a-b+2$,}\\
1 &\text{if $r-b \leq \wt(\rho)<r$,}\\
0 &\text{otherwise.}\\
\end{cases}
\]
\begin{Prop}
For positive integers $a,k$, let $G$ be the group of type $\frac{1}{r}(1,a,b)$ with $r=ab+a-2b+1$ and $b=ak+1$. Furthermore assume that $a\geq 6$. Let $X_v$ denote the toric variety given by the star subdivision of $\sigma_+$ at $v=\frac{1}{r}(1,a,b)$. Let $Y\rightarrow X:=\C^3/G$ be a relative minimal model admitting a morphism $Y\to X_v$.
Then $Y$ is isomorphic to the birational component $\yth$ of the moduli space $\mth$ of $\theta$-stable $G$-constellations for a suitable parameter~$\theta$.
\end{Prop}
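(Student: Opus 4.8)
The plan is to follow, essentially verbatim, the proof of Theorem~\ref{Thm:Main Theorem:2nd case c geq 2} (Case~(a) of the second case): the local geometry of $X_v$ and the entire brickset bookkeeping are identical, and the only genuinely new task is to check that the explicit parameter $\vtheta$ displayed just above the statement enjoys the three properties of Remark~\ref{Rem:key properties of vartheta}. As preliminary facts I would record that the star subdivision of $\sigma_+$ at $v=\tfrac1r(1,a,b)$ is \emph{good}: the first coordinate of $v$ being $\tfrac1r$, $v$ generates $L/\Z^3$; and $r-(1+a)=b(a-2)$, $r-(1+b)=b(a-3)+a$, $r-(a+b)=b(a-3)+1$ are all positive since $a\geq 6$ and $b=ak+1$. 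The cone $\sigma_1=\Cone(v,e_2,e_3)$ is smooth because $rv-ae_2-be_3=e_1$; from $b\equiv1\pmod a$, hence $r\equiv-1\pmod a$, the group $G_2=L/L_2$ is of type $\tfrac1a(1,-r,b)=\tfrac1a(1,1,1)$; and from $r\equiv a+1\pmod b$ the group $G_3=L/L_3$ is a Gorenstein cyclic quotient, so $U_3=\C^3/G_3$ with $G_3$ a finite abelian subgroup of $\SL_3(\C)$.

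Next I would exploit the standing hypothesis that $Y\to X$ admits a morphism $\overline\varphi\colon Y\to X_v$, together with the fact that $Y$ is smooth (as $X_{\mathrm{can}}$ is Gorenstein here), to obtain --- as in Section~\ref{Sec:star subdivisions and bricksets} --- induced smooth projective morphisms $\overline\varphi_k\colon Y_k\to U_k$ for $k=1,2,3$, each a relative minimal model of $U_k$. For $k=1$ this is an isomorphism, with the trivial brickset $\gr_1=\bigl\{\{1,x_1,\dots,x_1^{r-1}\}\bigr\}$. For $k=2$, the relative minimal model of $\C^3/G_2$ with $G_2$ of type $\tfrac1a(1,1,1)$ is unique, equal to the (smooth) star subdivision, which coincides with $\GHii{2}$; thus $Y_2=\GHii{2}$ and one has a $G_2$-brickset $\gr_2$ with a parameter $\theta^{(2)}$ in the $G_2$-Hilbert chamber (Ito--Nakajima~\cite{IN}) stabilising every $\Gamma'\in\gr_2$. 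For $k=3$, since $G_3\subset\SL_3(\C)$ is finite abelian, the Craw--Ishii Theorem~\ref{Thm:CI theorem} provides a $G_3$-brickset $\gr_3$ and a generic $\theta^{(3)}$ stabilising every $\Gamma'\in\gr_3$. By Theorem~\ref{Thm:subdivision and brickset} the natural inverses assemble these into a $G$-brickset $\gr=\gr_1\cup\{\roundbrick(\Gamma')\st\Gamma'\in\gr_2\}\cup\{\roundbrick(\Gamma')\st\Gamma'\in\gr_3\}$ for $Y\to X$, so by Proposition~\ref{Prop:if we have a brickset enough to show there exists theta} it remains only to find one parameter $\theta$ stabilising every brick of $\gr$.

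Since $\gcd(a,b)=\gcd(a,1)=1$, Lemma~\ref{Lem:the existence of partial solns for star subdivision} yields $\theta_P\in\Theta$ with $\roundstab(\theta_P)\equiv\theta^{(k)}$ for $k=2,3$. I would then set $\theta:=\theta_P+m\vtheta$ for $m\gg0$ and verify the three properties of Remark~\ref{Rem:key properties of vartheta} for the displayed $\vtheta$; Proposition~\ref{Prop:our stability for 1st case}, which uses only these properties together with $\roundstab(\theta_P)\equiv\theta^{(k)}$, then applies word for word, and a small generic perturbation (Remark~\ref{Rem:generic parameter for 1st case}) finishes, so that Proposition~\ref{Prop:if we have a brickset enough to show there exists theta} gives $Y\cong\yth$. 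Property~(2) is immediate, as $[0,a_k)\subseteq[0,b)$ where $\vtheta=-1$. Property~(1), $\roundstab(\vtheta)\equiv0$, follows from a residue count modulo $a$ and modulo $b$: the two length-$b$ intervals $\{0,\dots,b-1\}$ and $\{r-b,\dots,r-1\}$, carrying $\mp1$, balance each residue class mod $b$; the singleton weights $ab-5b+3$ and $r-a-b+2$ are both $\equiv 3\pmod b$ and cancel; and mod $a$, using $b\equiv1$, the same two intervals over-represent respectively the classes $0$ and $a-2$ by one, these defects being corrected by $r-a-b+2\equiv0\pmod a$ and $ab-5b+3\equiv a-2\pmod a$. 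Property~(3), positivity of $\vtheta$ on every localisation chain $A(\bm)=\{x_k^l\cdot\bm\st\phi_k(x_k^l\cdot\bm)=\phi_k(\bm)\}$, is treated via Lemma~\ref{Lem:localizations,lacing}: each such chain is a run of weights spaced by $a_k$ confined to a window of length $<r$ below the next jump of the floor, and one checks that within any such window the $+1$-marked weights ($r-a-b+2$ and those in $\{r-b,\dots,r-1\}$) occur, counted with multiplicity, at least as often as the $-1$-marked ones ($ab-5b+3$ and those in $\{0,\dots,b-1\}$).

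I expect property~(3) to be the genuine obstacle: properties~(1) and~(2) are a finite residue computation, but~(3) must hold for \emph{every} localisation chain simultaneously --- over all weights $a_k\leq\wt(\bm)<r$ and both $k\in\{2,3\}$ --- and it is exactly this that forces the precise position of the negative singleton $ab-5b+3$, as well as the hypothesis $a\geq6$, the latter needed to guarantee $ab-5b+3=b(a-5)+3>b$ so that the four clauses defining $\vtheta$ remain pairwise disjoint. One further caveat is unavoidable: the conclusion covers only relative minimal models that dominate $X_v$, because for $c=1$ the relative canonical model carries the non-simplicial cone $\sigma_5=\Cone(e_1,e_2,v,w)$ and so is not obtained by star subdivisions, whence some relative minimal models of $X$ genuinely admit no morphism to $X_v$ (cf.\ Example~\ref{Eg:1/39(1,5,11):no morphism to X_v}).
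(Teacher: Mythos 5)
Your proposal is correct and follows essentially the same route as the paper, which disposes of this case by declaring it is proved ``in the same way as in Case~(a)'' with the displayed $\vtheta$: assemble the $G$-brickset via Theorem~\ref{Thm:subdivision and brickset} (trivial chart on $\sigma_1$, the unique minimal model $\GHii{2}$ over $\frac{1}{a}(1,1,1)$, Craw--Ishii on the Gorenstein chart), take $\theta=\theta_P+m\vtheta$ with $\theta_P$ from Lemma~\ref{Lem:the existence of partial solns for star subdivision} using $\gcd(a,b)=1$, and rerun the proof of Proposition~\ref{Prop:our stability for 1st case}. Your mod-$a$ and mod-$b$ computations for property~(i) of Remark~\ref{Rem:key properties of vartheta} are correct, and for property~(iii) the required strict positivity (not merely ``at least as often'') does hold, since the terminal element of every $x_k$-chain has weight in $[r-a_k,r)$, where $\vtheta=+1$, while the lone $-1$ at $ab-5b+3$ is always compensated by the $+1$ at $r-a-b+2$ occurring later in the same chain.
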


\begin{Eg}\label{Eg:1/39(1,5,11):no morphism to X_v}
Consider the group $G$ of type $\frac{1}{39}(1,5,11)$. Then the star subdivision at $v=\frac{1}{39}(1,5,11)$ gives:
\begin{enumerate}
\item $\sigma_2:=\Cone(e_1,v,e_3)$ corresponds to the quotient singularity of type $\frac{1}{5}(1,1,1)$;
\item $\sigma_3:=\Cone(e_1,e_2,v)$ corresponds to the quotient singularity of type $\frac{1}{11}(1,5,5)$.
\end{enumerate}

\begin{figure}
\begin{center}
\begin{tikzpicture}

\coordinate [label=below:$e_3$] (se3) at (3-3.30,0);
\coordinate [label=below:$e_2$] (se2) at (9-3.30,0);
\coordinate [label=above:$e_1$] (se1) at (6-3.30,5.1);
\draw[fill] (se1) circle [radius=0.04];
\draw[fill] (se2) circle [radius=0.04];
\draw[fill] (se3) circle [radius=0.04];
\draw (se3) -- (se2);
\draw (se3) -- (se1);
\draw (se2) -- (se1);

\coordinate [label=below:$v_1$] (sv1) at (5-3.30,0.7);
\coordinate [label=right:$v_8$] (sv8) at (4.7-3.30,1.79);
\foreach \x in {1,8}
\draw[fill] (sv\x) circle [radius=0.04];
\foreach \x in {1,2,3}
\draw (sv1) -- (se\x);

\foreach \k in {0,1,2,3,4}
\coordinate (svv\k) at (7-\k*0.2-3.30, 1.4+\k*0.74);
\foreach \k in {0,1,2,3,4}
\draw[fill] (svv\k) circle [radius=0.04];

\node [below] at (svv0) {$v_{4}$};
\node [below] at (svv1) {$v_{11}$};
\node [below] at (svv2) {$v_{18}$};
\node [below] at (svv3) {$v_{25}$};
\node [below] at (svv4) {$v_{32}$};

\node [right] at (3.4-3.30-0.3,-0.6-0.3) {$X_v:\text{the star subdivision at $v_1$}$};

\coordinate [label=below:$e_3$] (e3) at (3+3.30,0);
\coordinate [label=below:$e_2$] (e2) at (9+3.30,0);
\coordinate [label=above:$e_1$] (e1) at (6+3.30,5.1);
\draw[fill] (e1) circle [radius=0.04];
\draw[fill] (e2) circle [radius=0.04];
\draw[fill] (e3) circle [radius=0.04];
\draw (e3) -- (e2);
\draw (e3) -- (e1);
\draw (e2) -- (e1);

\coordinate [label=below:$v_1$] (v1) at (5+3.30,0.7);
\coordinate [label=right:$v_8$] (v8) at (4.7+3.30,1.79);
\foreach \x in {1,8}
\draw[fill] (v\x) circle [radius=0.04];
\foreach \x in {2,3}
\draw (v1) -- (e\x);
\draw (v1) -- (v8);
\foreach \x in {1,3}
\draw (v8) -- (e\x);

\foreach \k in {0,1,2,3,4}
\coordinate (vv\k) at (7-\k*0.2+3.30, 1.4+\k*0.74);
\foreach \k in {0,1,2,3,4}
\draw[fill] (vv\k) circle [radius=0.04];

\node [below] at (vv0) {$v_{4}$};
\node [below] at (vv1) {$v_{11}$};
\node [below] at (vv2) {$v_{18}$};
\node [below] at (vv3) {$v_{25}$};
\node [below] at (vv4) {$v_{32}$};

\node [right] at (3.4+3.30-0.3-0.2,-0.6-0.3) {$X_{\mathrm{can}}: \text{a relative canonical model}$};

\coordinate [label=below:$e_3$] (le3) at (3-3.30,0+6.50);
\coordinate [label=below:$e_2$] (le2) at (9-3.30,0+6.50);
\coordinate [label=above:$e_1$] (le1) at (6-3.30,5.1+6.50);
\draw[fill] (le1) circle [radius=0.04];
\draw[fill] (le2) circle [radius=0.04];
\draw[fill] (le3) circle [radius=0.04];
\draw (le3) -- (le2);
\draw (le3) -- (le1);
\draw (le2) -- (le1);

\coordinate [label=below:$v_1$] (lv1) at (5-3.30,0.7+6.50);
\coordinate [label=left:$v_8$] (lv8) at (4.7-3.30,1.79+6.50);
\foreach \x in {1,8}
\draw[fill] (lv\x) circle [radius=0.04];
\foreach \x in {2,3}
\draw (lv1) -- (le\x);
\draw (lv1) -- (lv8);
\foreach \x in {1,3}
\draw (lv8) -- (le\x);

\foreach \k in {0,1,2,3,4}
\coordinate (lvv\k) at (7-\k*0.2-3.30, 1.4+\k*0.74+6.50);
\foreach \k in {0,1,2,3,4}
\draw[fill] (lvv\k) circle [radius=0.04];

\node [below left] at (lvv0) {$v_{4}$};
\node [below left] at (lvv1) {$v_{11}$};
\node [below left] at (lvv2) {$v_{18}$};
\node [below left] at (lvv3) {$v_{25}$};
\node [below left] at (lvv4) {$v_{32}$};

\node [right] at (3.4-3.50,-0.6+6.50) {$Y: \text{a relative minimal model}$};


\draw (lv1) -- (le1);
\foreach \k in {0,1,2,3,4}
\draw (lv1) -- (lvv\k);
\foreach \k in {0,1,2,3,4}
\draw (le2) -- (lvv\k);
\draw (le1) -- (lvv0);

\coordinate [label=below:$e_3$] (re3) at (3+3.30,0+6.50);
\coordinate [label=below:$e_2$] (re2) at (9+3.30,0+6.50);
\coordinate [label=above:$e_1$] (re1) at (6+3.30,5.1+6.50);
\draw[fill] (re1) circle [radius=0.04];
\draw[fill] (re2) circle [radius=0.04];
\draw[fill] (re3) circle [radius=0.04];
\draw (re3) -- (re2);
\draw (re3) -- (re1);
\draw (re2) -- (re1);

\coordinate [label=below:$v_1$] (rv1) at (5+3.30,0.7+6.50);
\coordinate [label=left:$v_8$] (rv8) at (4.7+3.30,1.79+6.50);
\foreach \x in {1,8}
\draw[fill] (rv\x) circle [radius=0.04];
\foreach \x in {2,3}
\draw (rv1) -- (re\x);
\draw (rv1) -- (rv8);
\foreach \x in {1,3}
\draw (rv8) -- (re\x);

\foreach \k in {0,1,2,3,4}
\coordinate (rvv\k) at (7-\k*0.2+3.30, 1.4+\k*0.74+6.50);
\foreach \k in {0,1,2,3,4}
\draw[fill] (rvv\k) circle [radius=0.04];

\node [below left] at (rvv0) {$v_{4}$};
\node [below left] at (rvv1) {$v_{11}$};
\node [below left] at (rvv2) {$v_{18}$};
\node [below left] at (rvv3) {$v_{25}$};
\node [below left] at (rvv4) {$v_{32}$};

\node [right] at (3.4+3.50,-0.6+6.50) {$Z: \text{a relative minimal model}$};


\draw (rv8) -- (rvv0);
\draw (rv1) -- (rvv0);
\foreach \k in {0,1,2,3,4}
\draw (rv8) -- (rvv\k);
\foreach \k in {0,1,2,3,4}
\draw (re2) -- (rvv\k);
\draw (re1) -- (rvv0);

\end{tikzpicture}
\end{center}
\caption{Fans of birational models for $\frac{1}{39}(1,5,11)$}
\label{Fig:1/39(1,5,11)}
\end{figure}
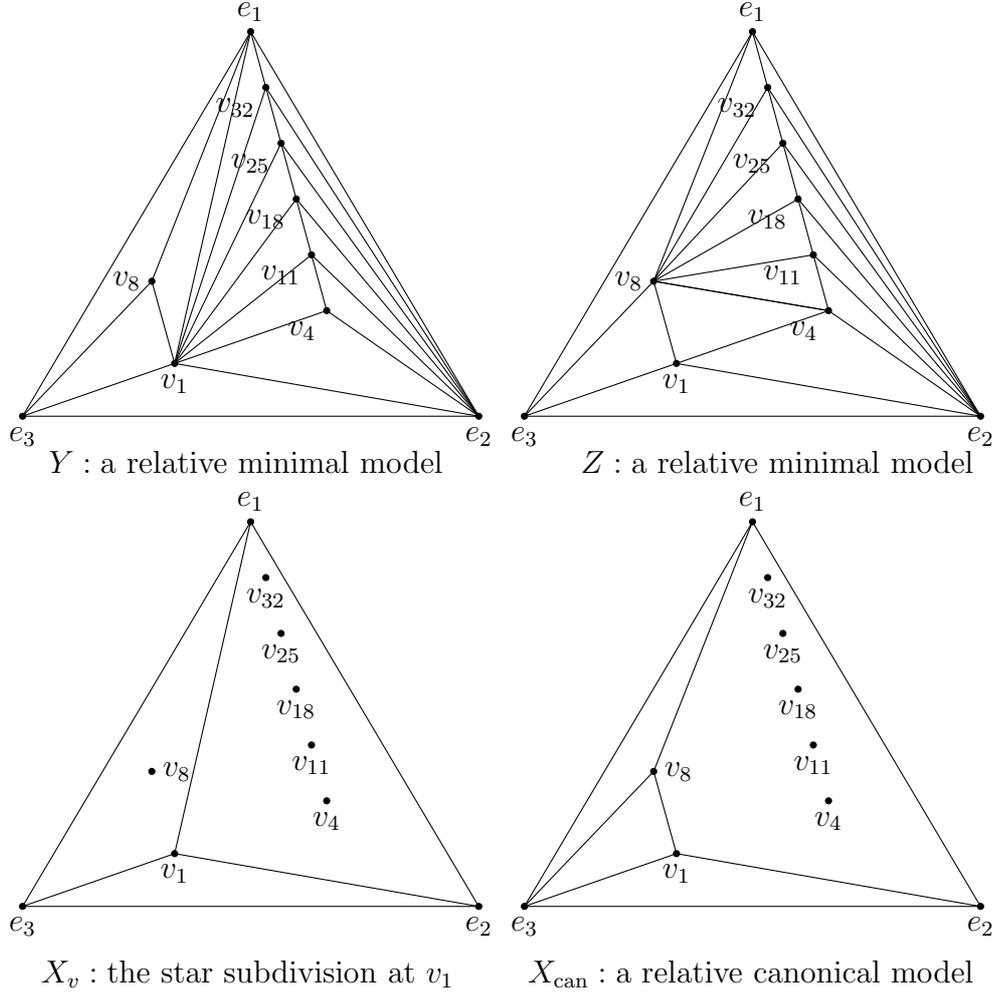

As is discussed above, the relative canonical model $X_{\mathrm{can}}$ of $X=\C^3/G$ is Gorenstein, but not $\Q$-factorial. 

Let $v_i$ denote the lattice point $\frac{1}{r}(\overline{i}, \overline{5i}, \overline{11i})$ where $\bar{\quad}$ denotes the residue modulo $r$. In particular, $v_1=v$ and $v_8=w$. Note that there exists a plane $\Pi$ containing $e_1$, $e_2$, $v_1$ and $v_8$. Observe that the lattice points $v_4$, $v_{11}$, $v_{18}$, $v_{25}$, and $v_{32}$ lie on the plane $\Pi$. Thus subdividing the cone $\sigma_5$ into smooth cones only using these points defines a crepant resolution of the toric singularity given by $\sigma_5$ where
\[
\sigma_5=\Cone(e_1,e_2,v_1,v_8).
\]

In Figure~\ref{Fig:1/39(1,5,11)}, the variety $Y$ is a relative minimal model of $X$ admitting a morphism to $X_v$. Actually one can prove that $Y$ is isomorphic to $\yth$ for some $\theta$. On the other hand, the variety $Z$ is a relative minimal model having no morphism to $X_v$. At this moment, we do not know that whether $Z$ is isomorphic to $\yth$ for some~$\theta$.
\eeg
\newpage
\subsection{Discussions} 
\subsubsection{Smoothness of minimal models} From Theorem~\ref{Thm:Toric MMP}, it follows that if the relative canonical model $X_{\mathrm{can}}$ is Gorenstein, then any relative minimal model is Gorenstein. Since a toric Gorenstein 3-fold terminal singularity is smooth, every relative minimal model is smooth if the relative canonical model $X_{\mathrm{can}}$ is Gorenstein for $G$ being abelian. However, we do not know any sufficient condition for the group $G$ of type $\frac{1}{r}(1,a,b)$ having the Gorenstein relative canonical model $X_{\mathrm{can}}$ of $\C^3/G$.
\begin{Que}
Let $G$ be the group of type $\frac{1}{r}(1,a,b)$ with $a+b+1<r$. Let $X$ be the quotient $\C^3/G$ and $X_{\mathrm{can}}$ the relative canonical model of $X$. When is $X_{\mathrm{can}}$ Gorenstein? If so, when can we obtain $X_{\mathrm{can}}$ by a sequence of star subdivisions?
\end{Que}
\subsubsection{Other stability parameters}
The theorems above said that for a relative minimal model $Y$ there exists some parameter $\theta$ such that $\yth$ is isomorphic to $Y$. We can ask whether $\yth$ is a relative minimal model for all generic $\theta$ or not. 

Sara Muhvi\'{c} calculated the following:
\begin{enumerate}
\item for the type of $\frac{1}{12}(1,2,3)$, $\GHilb{3}$ is smooth but not a relative minimal model;
\item for the type of $\frac{1}{24}(1,3,5)$, $\GHilb{3}$ is not even smooth.
\end{enumerate}
Thus it seems that there exist few chambers in $\Theta$ giving a relative minimal model of $\C^3/G$.
\begin{Que}
Let $G$ be the group of type in Section~\ref{Sec:1st case} or Section~\ref{Sec:2nd case}. For which $\theta$, is $\yth$ a relative minimal model?
\end{Que}

\subsubsection{Existence of stability parameters} Let $Y\to X=\C^3/G$ be a relative minimal model admitting a morphism to $X_v$ where $X_v$ is the toric variety given by the star subdivision of $\sigma_+$ at $v$. The main theorem was proved by showing the three statements:
\begin{enumerate}
\item there exists a $G$-brickset $\gr$ for $Y\to X$ using round down functions;
\item the linear map $\Theta \to \Theta^{(1)}\oplus\Theta^{(2)}\oplus\Theta^{(3)}$ is surjective in~(\ref{Eqtn:partial solution});
\item there exists a stability parameter $\vtheta$ satisfying~(\ref{Rem:key properties of vartheta}).
\end{enumerate} 

To prove (i), we only used the existence of a $G_k$-brickset for $G_k$, whose order is smaller than that of $G$. When we proved (ii), we only use the assumption that $a$ and $b$ are coprime. However, showing the existence of $\vtheta$ in~(iii) was done on a case by case basis in Section~\ref{Sec:1st case} and Section~\ref{Sec:2nd case}. It would be interesting if we have a systematic way to produce such a parameter~$\vtheta$.
\begin{Que}
Is there a systematic method to find a stability parameter $\vtheta$ satisfying the properties in Remark~\ref{Rem:key properties of vartheta} for a star subdivision?
\end{Que}
\appendix
\section{$\frac{1}{39}(1,5,11)$ type}
Let $G$ be the group of type $\frac{1}{39}(1,5,11)$ as in Example~\ref{Eg:1/39(1,5,11):no morphism to X_v}. Consider the relative minimal model $Z$ in Figure~\ref{Fig:1/39(1,5,11)}. In this section, although we cannot see that $Z$ is isomorphic to the birational component $\yth$ of $\mth$, we show that there exists a $G$-brickset for $Z\to X=\C^3/G$.

Although there is no morphism $Z\to X_v$, there exists a morphism $\overline{\varphi}\colon Z\to X_u$ where $X_u$ is the toric variety given by the star subdivision at $u=v_4=\frac{1}{39}(4,20,5)$. The star subdivision of $\sigma_+$ at $u$ produces the three cones:
\[
\sigma_1=\Cone(u,e_2,e_3),\quad\sigma_2=\Cone(e_1,e_2,u),\quad\sigma_3=\Cone(e_1,u,e_3).
\]
The morphism $\overline{\varphi}$ induces the following three morphisms:
\begin{enumerate}
\item $\overline{\varphi}_1\colon Z_1\to \C^3/G_1$, where $G_1$ is of type $\frac{1}{4}(1,0,1)$;
\item $\overline{\varphi}_2\colon Z_2\to \C^3/G_2$, where $G_2$ is of type $\frac{1}{20}(4,1,5)$;
\item $\overline{\varphi}_3\colon Z_3\to \C^3/G_3$, where $G_3$ is of type $\frac{1}{5}(4,0,1)$.
\end{enumerate}

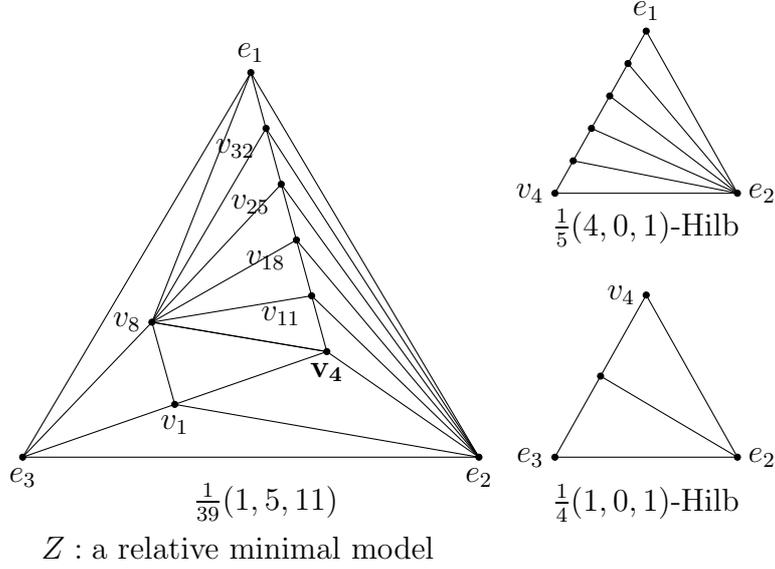
\begin{figure}
\begin{center}
\begin{tikzpicture}

\coordinate [label=below:$e_3$] (re3) at (3-2.70,0+6.50);
\coordinate [label=below:$e_2$] (re2) at (9-2.70,0+6.50);
\coordinate [label=above:$e_1$] (re1) at (6-2.70,5.1+6.50);
\draw[fill] (re1) circle [radius=0.04];
\draw[fill] (re2) circle [radius=0.04];
\draw[fill] (re3) circle [radius=0.04];
\draw (re3) -- (re2);
\draw (re3) -- (re1);
\draw (re2) -- (re1);

\coordinate [label=below:$v_1$] (rv1) at (5-2.70,0.7+6.50);
\coordinate [label=left:$v_8$] (rv8) at (4.7-2.70,1.79+6.50);
\foreach \x in {1,8}
\draw[fill] (rv\x) circle [radius=0.04];
\foreach \x in {2,3}
\draw (rv1) -- (re\x);
\draw (rv1) -- (rv8);
\foreach \x in {1,3}
\draw (rv8) -- (re\x);

\foreach \k in {0,1,2,3,4}
\coordinate (rvv\k) at (7-\k*0.2-2.70, 1.4+\k*0.74+6.50);
\foreach \k in {0,1,2,3,4}
\draw[fill] (rvv\k) circle [radius=0.04];

\node [below] at (rvv0) {$\mathbf{v_{4}}$};
\node [below left] at (rvv1) {$v_{11}$};
\node [below left] at (rvv2) {$v_{18}$};
\node [below left] at (rvv3) {$v_{25}$};
\node [below left] at (rvv4) {$v_{32}$};

\node [right] at (5.4-3.00,-0.6+6.50) {$\frac{1}{39}(1,5,11)$};
\node [right] at (3.4-3.00,-0.6+5.90) {$Z: \text{a relative minimal model}$};


\draw (rv8) -- (rvv0);
\draw (rv1) -- (rvv0);
\foreach \k in {0,1,2,3,4}
\draw (rv8) -- (rvv\k);
\foreach \k in {0,1,2,3,4}
\draw (re2) -- (rvv\k);
\draw (re1) -- (rvv0);


\coordinate [label=left:$v_4$] (ue3) at (7.3,0+10);
\coordinate [label=right:$e_2$] (ue2) at (9.7,0+10);
\coordinate [label=above:$e_1$] (ue1) at (8.5,2.15+10);
\draw[fill] (ue1) circle [radius=0.04];
\draw[fill] (ue2) circle [radius=0.04];
\draw[fill] (ue3) circle [radius=0.04];
\draw (ue3) -- (ue2);
\draw (ue3) -- (ue1);
\draw (ue2) -- (ue1);

\foreach \k in {1,2,3,4}
\coordinate (uv\k) at (7.3+\k*0.24, 10+\k*0.43);

\foreach \k in {1,2,3,4}
\draw[fill] (uv\k) circle [radius=0.04];

\foreach \k in {1,2,3,4}
\draw (ue2) -- (uv\k);

\node [above] at (8.5,2.15+7.00) {$\frac{1}{5}(4,0,1)\HHilb$};

\coordinate [label=left:$e_3$] (ue3) at (7.3,0+6.50);
\coordinate [label=right:$e_2$] (ue2) at (9.7,0+6.50);
\coordinate [label=left:$v_4$] (ue1) at (8.5,2.15+6.50);
\draw[fill] (ue1) circle [radius=0.04];
\draw[fill] (ue2) circle [radius=0.04];
\draw[fill] (ue3) circle [radius=0.04];
\draw (ue3) -- (ue2);
\draw (ue3) -- (ue1);
\draw (ue2) -- (ue1);

\coordinate (dv1) at (7.3+2.5*0.24, 6.50+2.5*0.43);

\draw[fill] (dv1) circle [radius=0.04];

\draw (ue2) -- (dv1);

\node [above] at (8.5,2.15+3.35) {$\frac{1}{4}(1,0,1)\HHilb$};



%
%






\end{tikzpicture}
\end{center}
\caption{Recursion process for $Z$}
\label{Fig:Recursion for Z}
\end{figure}

As is shown in Figure~\ref{Fig:Recursion for Z}, note that for $k=1,3,$ the morphism $\overline{\varphi_k}\colon Z_k\to\C^3/G_k$ is given by
\[
G_k\HHilb\C^3 \to \C^3/G_k.
\]
Thus, to show the existence of a $G$-brickset $\gr$ for $Z\to X$, it only remains to show there exists a $G_2$-brickset for $\overline{\varphi_2}\colon Z_2 \to \C^3/G_2$ by Theorem~\ref{Thm:subdivision and brickset}. Considering the star subdivision of $\Cone(e_1,v_4,e_3)$ at $v_8$, one can see that there exists a stability parameter $\theta^{(2)}$ such that $Z_2$ is isomorphic to the birational component of the moduli space of $\theta^{(2)}$-stable $G_2$-constellations in a similar way to the case in the main theorem. Therefore we can conclude that there exists a $G$-brickset\footnote{You can find the $G$-brickset $\gr$ on my website:\\ \url{http://newton.kias.re.kr/~seungjo/CI1.html}} $\gr$ for $Z\to X$.

Finally, we discuss why we cannot see the existence of $\theta$.
First, a parameter $\vtheta$ satisfying~(\ref{Rem:key properties of vartheta}) can be found, eg.\ $\vtheta$ can be defined to be
\[
\vtheta(\rho_i) =
\begin{cases}
-1 &\text{if $0 \leq i \leq 18$,}\\
1 &\text{if $i=19$,}\\
1 &\text{if $20 \leq i \leq 38$.}
\end{cases}
\]
On the other hand, the linear map
\[
\phi_{\star}=\left((\phi_1)_{\star},(\phi_2)_{\star},(\phi_3)_{\star}\right)\colon \Theta \to \Theta^{(1)}\oplus\Theta^{(2)}\oplus \Theta^{(3)}
\]
is not surjective. Therefore, we cannot tell if there exists a solution for~(\ref{Eqtn:partial solution}).
However, this does not mean that there are no parameters $\theta$ for the $G$-brickset $\gr$. Using a computer, we might be able to find a parameter $\theta$ such that every $\Gamma \in \gr$ is $\theta$-stable. 


\bibliographystyle{alpha}


\end{document}